\newtheorem{theorem}{Theorem} %[section]
\newtheorem*{theorem*}{Theorem}
\newtheorem{lemma}[theorem]{Lemma}
\newtheorem{corollary}[theorem]{Corollary}
\theoremstyle{remark}
\newtheorem{remark}[theorem]{Remark}
\newcommand{\E}{\mathbb E}
\newcommand{\ii}{{\mathbf i}}
\newcommand{\eps}{\varepsilon}
\renewcommand{\t}{\mathbf{t}}
\newcommand{\C}{\mathcal C}
\newcommand{\1}{\mathbf 1}
\numberwithin{equation}{section} \numberwithin{theorem}{section}
\newcommand{\T}{\mathbf T}
\newcommand{\Tc}{\overline{\mathbf T}}
\newcommand{\vv}{\mathfrak v}
\DeclareMathOperator{\arcsh}{arcsh}
\title[Bulk universality for lozenge tilings]{Bulk universality for random lozenge tilings near
straight boundaries and for tensor products}
\author{Vadim Gorin}
\address[Vadim Gorin]{Department of Mathematics, Massachusetts Institute of Technology, Cambridge, MA, USA, and
 Institute for Information Transmission Problems of Russian Academy of Sciences, Moscow, Russia. E-mail: vadicgor@gmail.com}
\begin{document}

\maketitle

\begin{abstract}
 We prove that the asymptotic of the bulk local statistics in models of random lozenge tilings is universal
 in the vicinity of straight boundaries of the tiled domains. The result applies to uniformly
 random lozenge tilings of large polygonal domains on triangular lattice and to the probability measures describing the
 decomposition in Gelfand--Tsetlin bases of tensor products of representations of unitary groups.
 In a weaker form our theorem also applies to random domino tilings.
\end{abstract}

\section{Introduction}

\subsection{Overview}

This article is about random \emph{lozenge tilings}, which are tilings of domains on
the regular triangular grid by rhombuses of three types (see Figures \ref{Fig_Hex},
\ref{Fig_trap}, \ref{Fig_covered_domains}, \ref{Fig_not_covered_domain},
\ref{Fig_GT_tiling}), and which can be identified with dimers, discrete stepped
surfaces, $3d$ Young diagrams, see e.g.\ \cite{Kenyon_lectures}, \cite[Section
2]{BP}. In more details, we are interested in the asymptotic behavior of \emph{Gibbs
measures} on tilings, which means that tilings in each subdomain are always
conditionally uniformly distributed given boundary conditions, i.e.\ positions of
lozenges surrounding this domain. There are several ways to produce such Gibbs
measures. From the point of view of statistical mechanics, the most natural one is
to fix a very large planar domain and consider the uniform measure on \emph{all}
(finitely many) lozenge tilings of this domain. Asymptotic representation theory
suggests two more ways, related to decompositions of tensor products of
representations of classical Lie groups and restrictions of characters of the
infinite-dimensional counterparts of these groups, see e.g.\ \cite{BufGor},
\cite{BBO}.  Finally, one can also \emph{grow} tilings by means of interacting
particle systems, see e.g.\ \cite{BG}, \cite{BF}.

\smallskip

The rigorous asymptotic results available in the literature for various random
lozenge tilings models all share the same features. Let us describe such features on
the example of the uniformly random lozenge tilings of an $A\times B\times C$
hexagon as $A,B,C\to\infty$ (in such a way that $A/B$ and $A/C$ converge to
constants), see Figure \ref{Fig_Hex}. Note that such tilings can be linked both to
decompositions of certain irreducible representations (cf.\ \cite{BP}) and to growth
models (see \cite{BG}).

\begin{figure}[t]
\begin{center}
 {\scalebox{0.7}{\includegraphics{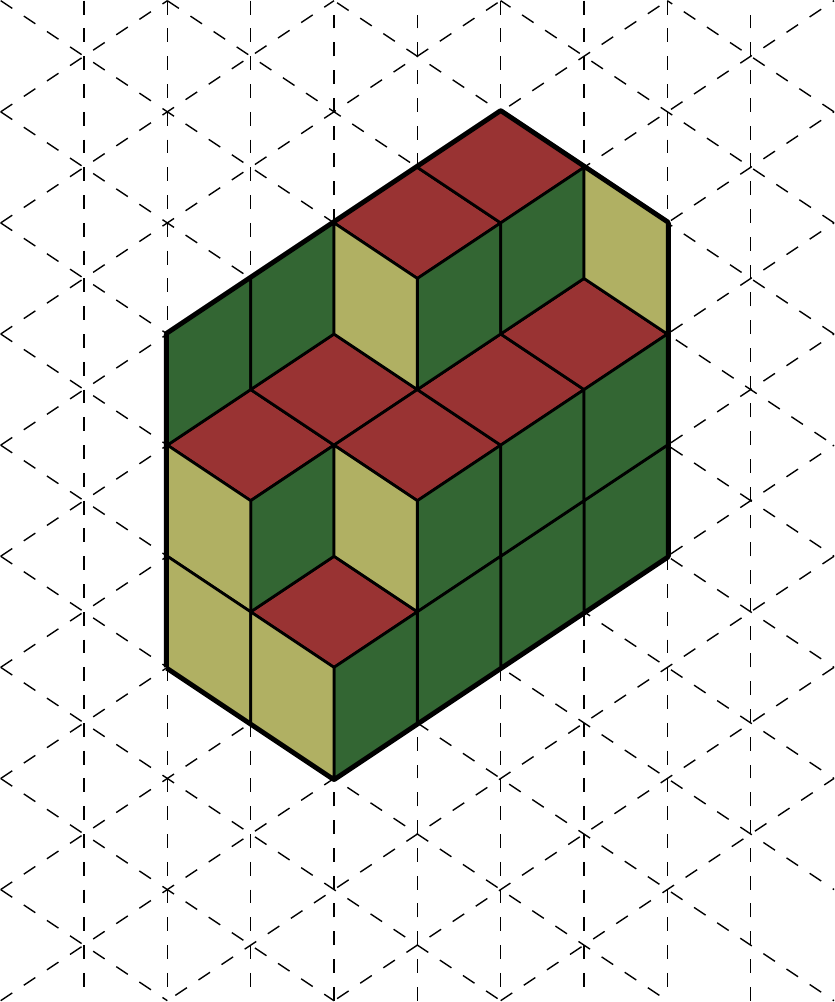}}}
 \quad
  {\scalebox{0.19}[0.31]{\includegraphics{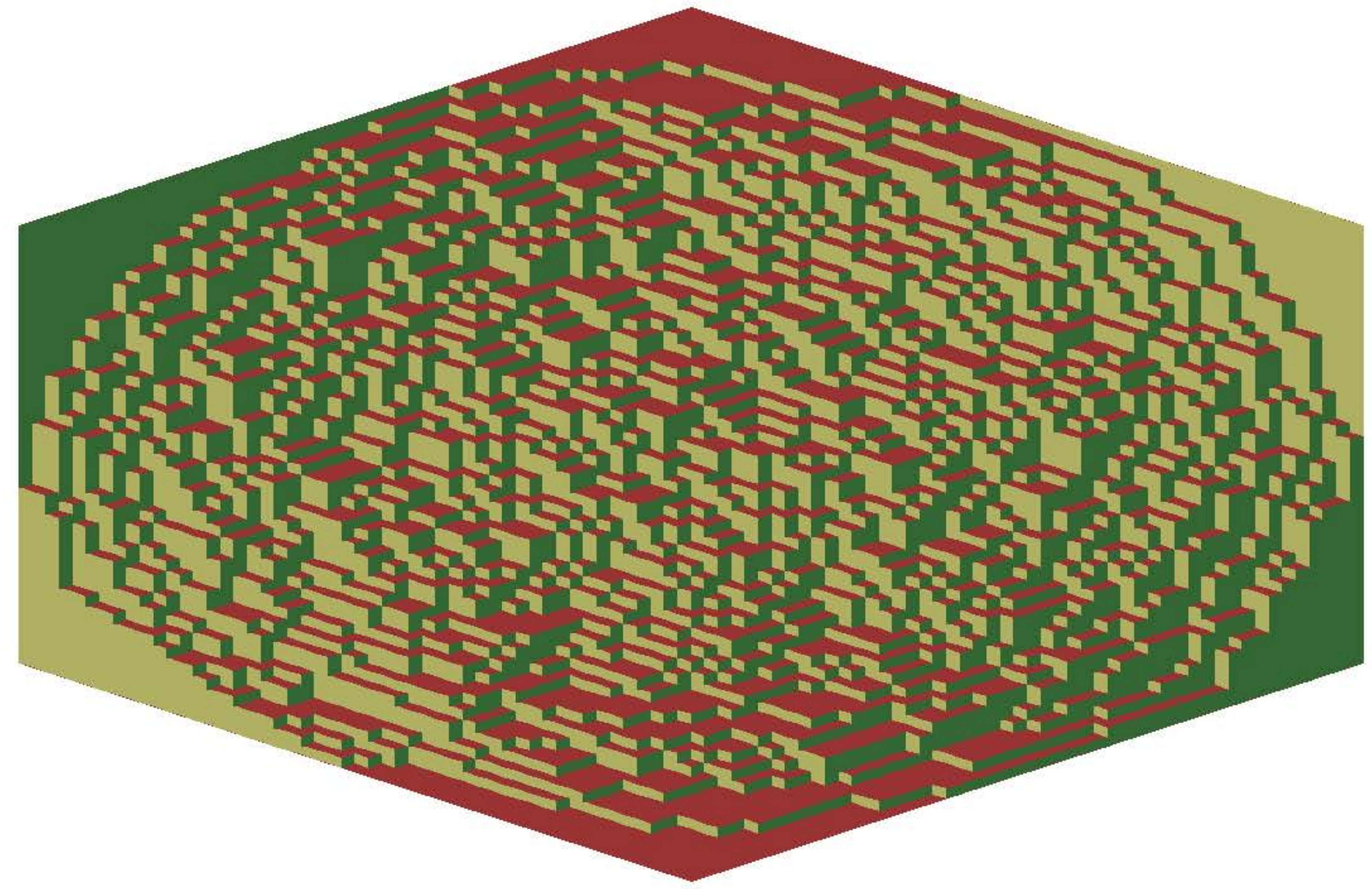}}}
\end{center}
 \caption{Uniformly random lozenge tilings of $3\times 4\times 2$ and $50\times 50\times 50$ hexagons.\label{Fig_Hex}}
\end{figure}

The following asymptotic features of the tilings of hexagons are known:
\begin{itemize}
 \item The random lozenge tilings exhibit the \emph{law of large numbers}, cf.\ Section \ref{Section_LLN}. One way
 to phrase it is that in each macroscopic sub-region of the hexagon the asymptotic proportions of three
 types of lozenges converge in probability to deterministic numbers, described by integrals over this sub-domain of three functions
 $p^{{\scalebox{0.15}{\includegraphics{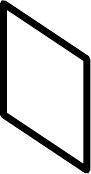}}}}(\cdot)$,
$p^{{\scalebox{0.15}{\includegraphics{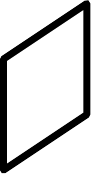}}}}(\cdot)$, and
$p^{{\scalebox{0.15}{\includegraphics{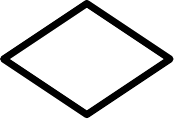}}}}(\cdot)$.  In particular,
 outside the inscribed ellipse one observes the \emph{frozen region} where asymptotically only one
 type of lozenges remains present, see \cite{CLP}, \cite{CKP}, \cite{BufGor}. The region
 where all three types of lozenges are asymptotically present is called
 \emph{liquid.}
 \item The global fluctuations of the \emph{height function} of tiling (see Section
 \ref{Section_LLN} for the detailed definition) are asymptotically normal and can be
 described via the pullback of the Gaussian Free Field with Dirichlet boundary
 conditions, see \cite{Petrov_GFF}, \cite{Duits}, \cite{BufGor_CLT}.
 \item Locally, near each point inside the inscribed ellipse in the limit one
 observes a \emph{translation invariant ergodic Gibbs measure} on lozenge tilings of the plane. Its\ \emph{slope} is determined by the law of large
 numbers (such measure is unique for each slope, see \cite{Sheffield}, and its
 correlation functions admit known closed expressions, cf.\ Section \ref{Section_Main_proof}), see \cite{BKMM},
 \cite{G_Hahn}, \cite{Petrov-curves}.
 \item The one--point fluctuations of the boundary of the frozen region at generic point after proper rescaling
 converge to the Tracy--Widom distribution $F_{GUE}$, and multi--point fluctuations
 are described by the Airy line ensemble, see \cite{BKMM}, \cite{Petrov-curves}.\footnote{For tilings of more complicated regions, the boundary of the
 frozen region might develop various singularities, which lead to different behaviors.}

 \item At a point where the frozen boundary is tangent to a side of the hexagon, the
 fluctuations are described by the GUE--corners process, see \cite{JN},
 \cite{Nord}, \cite{GP}, \cite{Novak}.
\end{itemize}
The \emph{universality belief} predicts that all these features should be very
robust along different models of lozenge tilings, and should not depend on details.
It means, that while the limit shape (i.e.\ the asymptotic proportions
$p^{{\scalebox{0.15}{\includegraphics{lozenge_v_down.pdf}}}}(\cdot)$,
$p^{{\scalebox{0.15}{\includegraphics{lozenge_v_up.pdf}}}}(\cdot)$,
$p^{{\scalebox{0.15}{\includegraphics{lozenge_hor.pdf}}}}(\cdot)$) and the exact
shape of the boundaries of the frozen regions differ from system to system, but they
should always exist. The fluctuations of the height function should always be given
by a pullback of the Gaussian Free Field, and only the map with respect to which
this pullback is taken, might change. Bulk local limits should be always described
by translation invariant Gibbs measures and only the slope of such measure might
vary from system to system. Finally, the fluctuations of the boundaries of frozen
regions should be always governed by the Tracy--Widom distribution, Airy line
ensemble and GUE--corners process.

While this conjectural universality is confirmed by numerous examples, but by now
only the Law of Large Numbers has been proven in sufficient generality, cf.\
\cite{CKP}, \cite{KO}, \cite{BufGor}.

\bigskip

In the present article we address the third feature about local (also called ``bulk'') scaling
limits. In simple words, we show that whenever for a random lozenge tiling the law of large numbers
holds, if the tiled domain has a straight boundary, then near this boundary the \emph{universality}
for the local limits is valid.

\subsection{Results}

We proceed to a more detailed formulation of our main result.

Consider a trapezoid drawn on the triangular grid as shown in Figure \ref{Fig_trap}.
Such domain is parameterized by the length of the left vertical side $C$ and the
width $N$ and we denote it $\Omega_{N,C}$. Consider the set of all tilings of
$\Omega_{N,C}$, in which we allow horizontal lozenges to \emph{stick out} of the
right boundary. The combinatorial constraints imply that there are precisely $N$
horizontal lozenges sticking out. Let $\mathcal P$ be a probability measure on
tilings of $\Omega_{N,C}$ which satisfies the \emph{Gibbs property}, which means
that given the positions of the $N$ horizontal lozenges on the boundary the
conditional distribution of the tilings becomes uniform.

\begin{figure}[t]
\begin{center}
 {\scalebox{0.6}{\includegraphics{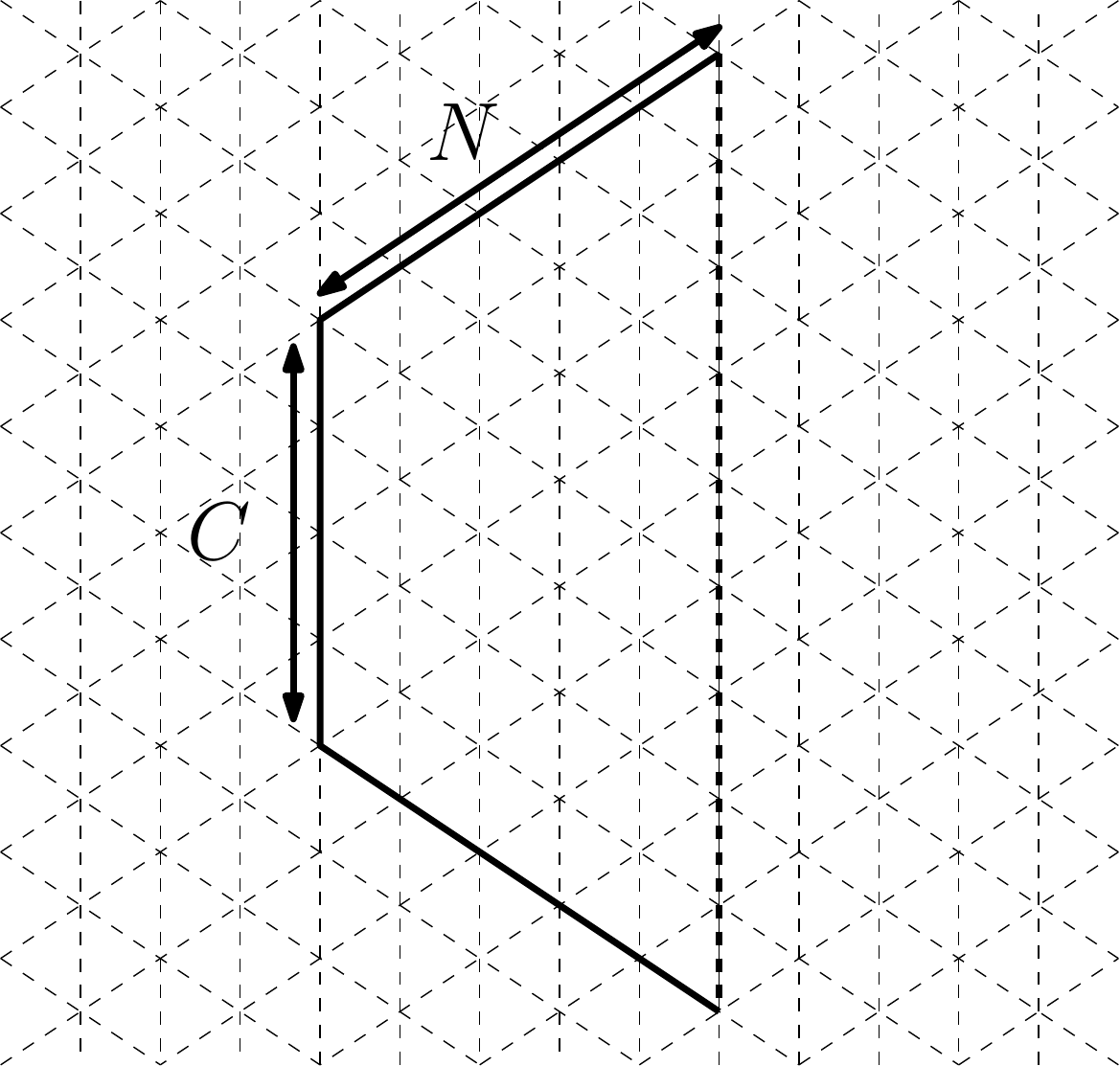}}}
 \quad
  {\scalebox{0.6}{\includegraphics{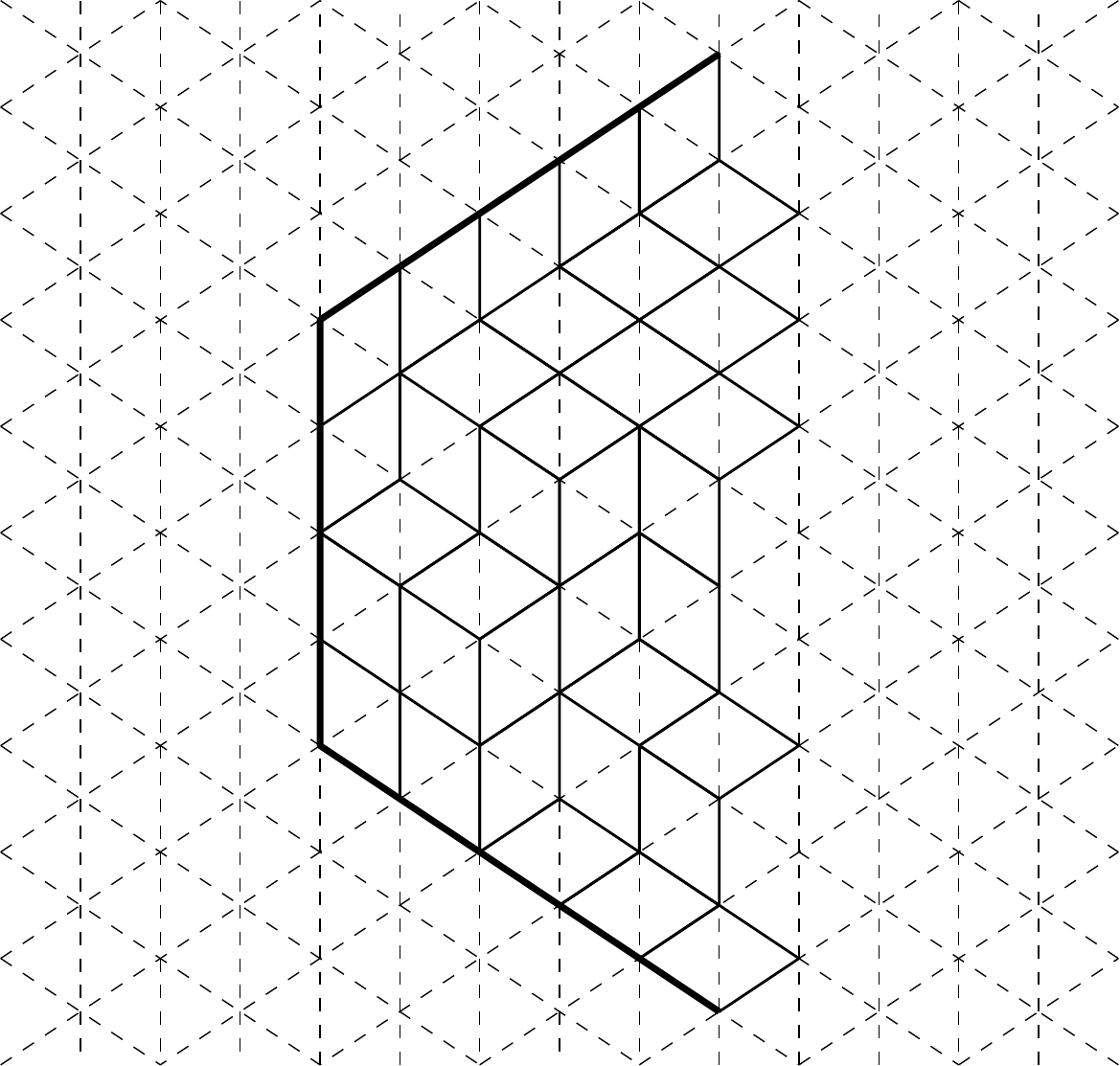}}}
\end{center}
 \caption{Trapezoid $\Omega_{N,C}$ with $N=5$, $C=4$ and one of its tilings.\label{Fig_trap}}
\end{figure}

Further, given a tiling $\omega$ of $\Omega_{N,C}$ with coordinates of the sticking
out horizontal lozenges $\ell_1>\ell_2>\dots>\ell_N$, let
$$
\mu(\omega)=\frac{1}{N}\sum_{i=1}^N \delta_{\ell_i/N}
$$
be a probability measure encoding them.
 Then the pushforward of $\mathcal P$ with
respect to the map $\omega\mapsto \mu(\omega)$ is a \emph{random} probability
measure $\mu_{\mathcal P}$. Due to the Gibbs property, $\mathcal P$ is uniquely
reconstructed by $\mu_{\mathcal P}$.

Note that each tiling of $\Omega_{N,C}$ can be viewed also as a tiling of
$\Omega_{N,C+1}$, by adding a row of
{{\scalebox{0.2}{\includegraphics{lozenge_v_up.pdf}}}} lozenges on top. For example,
the tiling $\Omega_{5,4}$ in the right panel of Figure \ref{Fig_trap} is
simultaneously a tiling of $\Omega_{5,3}$ with added row on top. Because of this
correspondence, if we fix the coordinates of horizontal lozenges along the right
boundary, then the exact value of $C$ in $\Omega_{N,C}$ becomes not important (as
long as it is large enough). In the same way, in order to reconstruct a $\mathcal
P$--random tiling by $\mu_{\mathcal P}$ we actually do not need to know the value of
$C$.

\begin{theorem} \label{Theorem_main_intro}
 Let $\Omega_{N,C(N)}$, $N=1,2,\dots$, be a sequence of trapezoids equipped
 with probability measures $\mathcal P(N)$ on their lozenge tilings. Suppose that:
 \begin{itemize}
  \item[$(A)$] $\frac{C(N)}{N}$ stays bounded as $N\to\infty$.
  \item[$(B)$] The random probability measures $\mu_{\mathcal P(N)}$ converge (weakly, in
  probability) to a \emph{deterministic} probability measure $\mu$.
 \end{itemize}
 Then
 \begin{enumerate}
  \item The $\mathcal P(N)$--random lozenge tilings of $\Omega_{N,C(N)}$ exhibit the
  Law of Large Numbers as $N\to\infty$: there are three deterministic densities $p^{{\scalebox{0.15}{\includegraphics{lozenge_v_down.pdf}}}}(\mathbf{x},\boldsymbol{\eta})$,
$p^{{\scalebox{0.15}{\includegraphics{lozenge_v_up.pdf}}}}(\mathbf{x},\boldsymbol{\eta})$,
and
$p^{{\scalebox{0.15}{\includegraphics{lozenge_hor.pdf}}}}(\mathbf{x},\boldsymbol{\eta})$,
$0<\boldsymbol{\eta}<1$, $\mathbf x\in\mathbb R$, such that for any subdomain
$\mathcal D\subset \frac{1}{N} \Omega_{N,C(N)}$ with smooth boundary, the normalized
by $N$ (random) numbers of lozenges of types
$({\scalebox{0.2}{\includegraphics{lozenge_v_down.pdf}}},
{\scalebox{0.2}{\includegraphics{lozenge_v_up.pdf}}},
{\scalebox{0.2}{\includegraphics{lozenge_hor.pdf}}})$ inside $N \mathcal D$ converge
in probability to the vector
$$
 \left( \int_{\mathcal D} p^{{\scalebox{0.15}{\includegraphics{lozenge_v_down.pdf}}}}(\mathbf{x},\boldsymbol{\eta}) d\mathbf x d\boldsymbol{\eta},
\int_{\mathcal D}
p^{{\scalebox{0.15}{\includegraphics{lozenge_v_up.pdf}}}}(\mathbf{x},\boldsymbol{\eta})
d\mathbf x d\boldsymbol{\eta}, \int_{\mathcal
D}p^{{\scalebox{0.15}{\includegraphics{lozenge_hor.pdf}}}}(\mathbf{x},\boldsymbol{\eta})
d\mathbf x d\boldsymbol{\eta} \right)
$$
 \item Take a point $(\mathbf x, \boldsymbol{\eta})$, such that all three asymptotic
 densities at this point are continuous and non-zero. Assume $0<\boldsymbol{\eta}<1$. If $n(N)$, $x(N)$ are two sequences of
 integers, such that $\lim_{N\to\infty} n(N)/N=\boldsymbol{\eta}$, $\lim_{N\to\infty}
 x(n)/N=\mathbf x$, then the point process of lozenges near the point $(n(N), x(N))$
 weakly converges as $N\to\infty$ to the (unique) translation invariant ergodic Gibbs
 measure on lozenge tilings
 of slope $(p^{{\scalebox{0.15}{\includegraphics{lozenge_v_down.pdf}}}}(\mathbf{x}, \boldsymbol{\eta}),
p^{{\scalebox{0.15}{\includegraphics{lozenge_v_up.pdf}}}}(\mathbf{x},
\boldsymbol{\eta}),
p^{{\scalebox{0.15}{\includegraphics{lozenge_hor.pdf}}}}(\mathbf{x},
\boldsymbol{\eta}))$.
 \end{enumerate}
\end{theorem}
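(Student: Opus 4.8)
The plan is to reduce the statement to a steepest-descent analysis of the correlation kernel of an explicit determinantal point process. The first step is to remove the randomness of the boundary. By assumption $(B)$ the empirical measure $\frac1N\sum_{i}\delta_{\ell_i/N}$ of the sticking-out horizontal lozenges converges weakly, in probability, to the deterministic $\mu$; and by the Gibbs property, conditionally on the positions $\ell_1>\dots>\ell_N$ the tiling is uniform. Hence it suffices to prove the two conclusions for arbitrary \emph{deterministic} sequences of top rows whose empirical measures converge weakly to $\mu$, and to check that the resulting limits depend on the boundary only through $\mu$. The original (random-boundary) statements then follow by conditioning on the event, of probability tending to $1$, that the empirical measure is close to $\mu$.

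For a fixed deterministic top row, the uniform measure on Gelfand--Tsetlin patterns with that row makes the positions of all horizontal lozenges a determinantal point process, via the standard non-intersecting paths / Schur process structure. I would record its correlation kernel $K_N$ as a double contour integral whose integrand is assembled from the ``symbol'' $\prod_{i=1}^N (z-\ell_i)^{-1}$ and an explicit factor encoding the vertical coordinate; equivalently, this kernel can be extracted from the Schur generating function of the boundary measure.

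The heart of the argument is the asymptotics of $K_N$ at a point $(n(N),x(N))$ with $n(N)/N\to\boldsymbol\eta$ and $x(N)/N\to\mathbf x$. Replacing the products by $\exp\!\big(N\!\int\log(\cdot)\,d\mu\big)$ turns the integral into one amenable to steepest descent, governed by a saddle-point equation written in terms of the Stieltjes transform $\int (z-s)^{-1}\,d\mu(s)$ and of $\boldsymbol\eta$. The hypothesis that all three densities are continuous and nonzero at $(\mathbf x,\boldsymbol\eta)$ is precisely the statement that the relevant saddle $z_c$ is non-real (the liquid regime), its argument and modulus encoding the local slope. Deforming the contours through $z_c$ and $\overline{z_c}$, steepest descent shows that $K_N$ converges to the incomplete beta (extended discrete sine) kernel with density $p^{\text{hor}}(\mathbf x,\boldsymbol\eta)$ and the correct slope. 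Since this is exactly the correlation kernel of the unique ergodic translation-invariant Gibbs measure of that slope, and convergence of correlation kernels implies weak convergence of point processes, part $(2)$ follows. For part $(1)$, the diagonal value of $K_N$ yields the limiting density of horizontal lozenges, the slope recovers the other two densities through the height function, and the negative correlations intrinsic to determinantal processes upgrade convergence of expectations to convergence in probability of the normalized counts over any subdomain $\mathcal D$.

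I expect the main obstacle to be the steepest descent with \emph{discrete} boundary data: because the symbol $\prod (z-\ell_i)^{-1}$ has poles at the $\ell_i$, the contour deformations and the localization at the saddle must be justified carefully, and this must be done uniformly over the boundary configurations in the good event above. The use of only weak convergence of $\mu$ is legitimate because in the liquid regime the saddle $z_c$ stays bounded away from the real axis, where the Stieltjes transform is continuous for the weak topology; the delicate points are controlling the integrand near the real axis, where the discreteness and the edges of $\mathrm{supp}\,\mu$ live, and verifying that the densities produced by the first-order asymptotics match the slope of the limiting kernel, so that parts $(1)$ and $(2)$ are mutually consistent.
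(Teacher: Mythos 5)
Your architecture for part (2) coincides with the paper's: use the Gibbs property to reduce to a uniformly random Gelfand--Tsetlin pattern with \emph{fixed} top row, which is determinantal with Petrov's double contour integral kernel; run a steepest descent whose only input is the weak convergence of the empirical boundary measure (the saddle stays away from the real axis in the liquid region, where the Stieltjes transform is weakly continuous); obtain the incomplete Beta kernel; and then average over the random boundary using that correlation functions are bounded by $1$, so convergence in probability of the conditional correlation functions gives convergence of their expectations. The difficulties you flag --- poles of the symbol on the real axis, contour behaviour near the real axis, uniformity over boundary configurations in the good event --- are exactly what the paper's Section 2 is devoted to. However, two steps of your outline have genuine gaps.

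First, part (1) cannot be extracted from the kernel asymptotics as you propose. The saddle-point analysis is only available when the saddle is non-real, i.e.\ in the liquid region (the paper explicitly leaves the all-real-roots case out of scope), whereas part (1) asserts the existence of deterministic densities on the whole rescaled domain and an LLN for counts in an \emph{arbitrary} subdomain $\mathcal D$, which may meet frozen regions and the arctic boundary, where the diagonal of $K_N$ is not controlled. The paper instead derives part (1) from the Cohn--Kenyon--Propp variational principle: weak convergence of the boundary measure forces convergence of the boundary height function, which determines the limit shape; no kernel is needed. (Your negative-correlation concentration must also be applied conditionally on the top row, since the averaged process is not determinantal --- fixable, but unstated.) Second, you assert that ``all three densities continuous and nonzero at $(\mathbf x,\boldsymbol{\eta})$'' is \emph{precisely} non-reality of the saddle, and that the limiting kernel's slope matches the LLN densities. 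This is the content of the paper's Theorem 4.2 and is not automatic: the densities of part (1) are defined through the variational-principle limit shape, the saddle through the boundary measure $\mu$, and linking them requires the quantized $R$-transform identity $R_{\mu[\boldsymbol{\eta}]}=\boldsymbol{\eta}^{-1}R_{\mu[1]}$ of Bufetov--Gorin plus a complex Burgers equation computation identifying $\arg\xi$ and $\arg(1-\xi)$ with lozenge proportions. Without this bridge, parts (1) and (2) are not tied together and the slope in part (2) is not identified.
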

A more detailed formulation and the proof is below in Theorems \ref{Theorem_main_bulk},
\ref{Theorem_LLN_bulk}. Of course, for the domains obtained from $\Omega_{N,C}$ by rotating it by
$60\cdot k$ degrees, $k=1,2,3,4,5$, the exact analogue of Theorem \ref{Theorem_main_intro} is also
valid.

\bigskip

The conditions $(A)$ and $(B)$ of Theorem \ref{Theorem_main_intro} are known to hold
for many models of lozenge tilings, which implies that the local convergence to the
translation invariant Gibbs measures is true for them. Let us provide some examples:

\begin{figure}[t]
\begin{center}
 {\scalebox{0.6}{\includegraphics{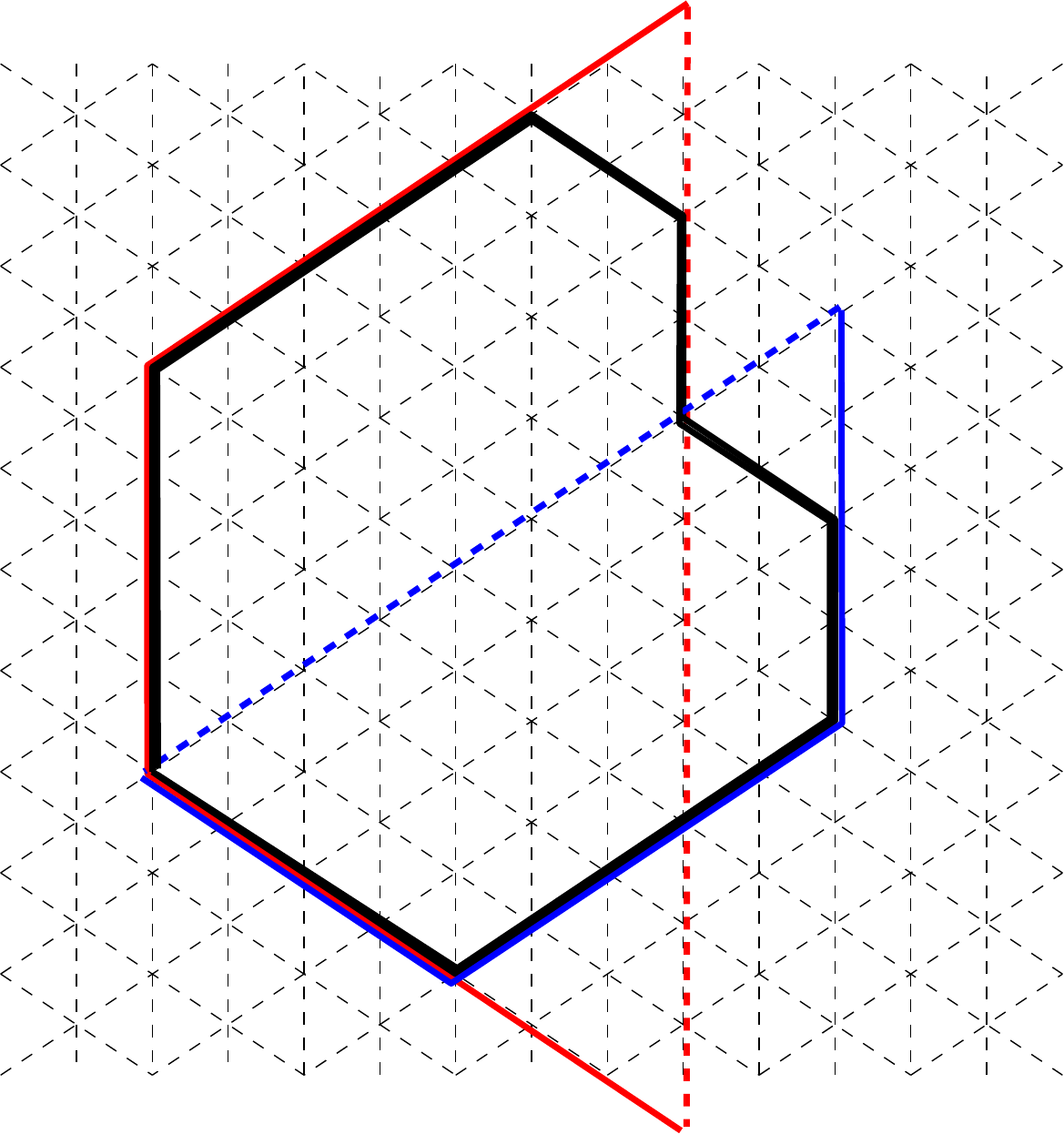}}}
 \quad
  {\scalebox{0.6}{\includegraphics{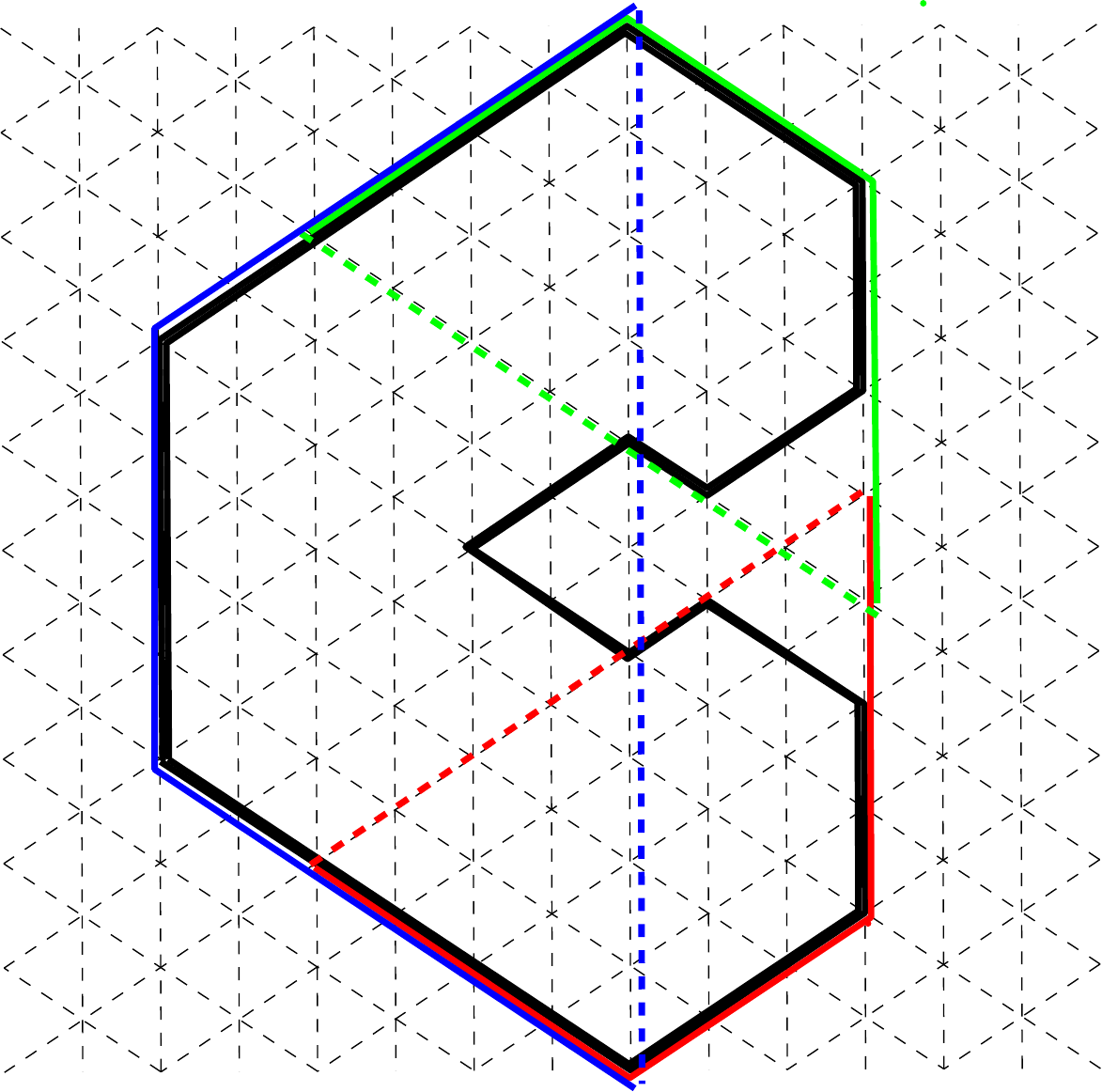}}}
\end{center}
 \caption{Left panel: the heart--shaped polygon (in black) is covered by two trapezoids. Right panel: the C--shaped polygon (in black)
 is covered by three trapezoids. \label{Fig_covered_domains}}
\end{figure}

\begin{corollary} \label{Corollary_tilings}
 Let $\Omega$ be any simply-connected polygonal domain on the triangular grid, and
 define $\Omega(L)$, $L=1,2,\dots$ to be the domain obtained by multiplying all the
 side lengths of $\Omega$ by $L$. For any part of $\Omega(L)$ covered by a trapezoid, near any
 point in the liquid region in this part, the uniformly random lozenge tilings of
 $\Omega(L)$ converge locally to the ergodic translation--invariant Gibbs measure of
 the corresponding slope.
\end{corollary}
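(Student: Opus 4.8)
The plan is to deduce Corollary \ref{Corollary_tilings} from Theorem \ref{Theorem_main_intro} by realizing the uniform measure on tilings of $\Omega(L)$, restricted to a suitable trapezoidal sub-region, as a Gibbs measure on trapezoid tilings to which the theorem directly applies. The whole point of the trapezoidal covering (as in Figure \ref{Fig_covered_domains}) is that each covering trapezoid is anchored along a genuine straight side of $\Omega(L)$, so that the hypotheses of the theorem concerning straight boundaries are met.

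First I would fix a point $(\mathbf{x},\boldsymbol{\eta})$ in the liquid region lying in a part of $\Omega(L)$ covered by a trapezoid, and place a copy of $\Omega_{N,C}$ inside $\Omega(L)$ with its straight (left) side lying along the corresponding straight boundary of $\Omega(L)$ and with $(\mathbf{x},\boldsymbol{\eta})$ in its interior. Since all side lengths of $\Omega(L)$ grow linearly in $L$, I may take the width $N$ and the height $C$ of the trapezoid both proportional to $L$; in particular $C(N)/N$ stays bounded, so hypothesis $(A)$ holds. The decisive structural input is the Gibbs property of the uniform measure: conditionally on the positions of the $N$ horizontal lozenges crossing the right (internal) boundary of the embedded trapezoid, the tiling inside the trapezoid is uniformly distributed, while decoupling from the tiling outside. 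Thus the conditional law is exactly a Gibbs measure $\mathcal{P}(N)$ on tilings of $\Omega_{N,C}$ in the sense of Theorem \ref{Theorem_main_intro}, and the associated random measure $\mu_{\mathcal{P}(N)}$ is precisely the empirical measure of the rescaled heights of the horizontal lozenges along that internal vertical cross-section.

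It remains to verify hypothesis $(B)$, that $\mu_{\mathcal{P}(N)}$ converges weakly, in probability, to a \emph{deterministic} measure $\mu$. Here I would invoke the known Law of Large Numbers for uniformly random tilings of polygons from \cite{CKP}, \cite{KO}, \cite{BufGor}: the height function of the tiling of $\Omega(L)$ converges in probability, after rescaling by $L$, to a deterministic limit shape. The empirical measure of horizontal-lozenge heights along a fixed macroscopic cross-section is a continuous functional of the rescaled height profile restricted to that line, so the limit-shape convergence forces $\mu_{\mathcal{P}(N)}$ to concentrate on the deterministic measure $\mu$ read off from the slope of the limit shape along the cross-section. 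With $(A)$ and $(B)$ in hand, Theorem \ref{Theorem_main_intro}(2) applies and yields local convergence near $(\mathbf{x},\boldsymbol{\eta})$ to the unique ergodic translation-invariant Gibbs measure of the slope dictated by the trapezoid's own Law of Large Numbers, Theorem \ref{Theorem_main_intro}(1).

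Finally, one must check that this slope coincides with the one predicted by the limit shape of $\Omega(L)$ at $(\mathbf{x},\boldsymbol{\eta})$, so that the conclusion concerns the geometrically correct Gibbs measure. This is automatic: the $\mathcal{P}(N)$-random tiling is literally the $\Omega(L)$-tiling restricted to the trapezoid, so the two Laws of Large Numbers produce the same densities on the overlap, and in particular the limiting local slopes agree. I expect the main obstacle to be exactly the verification of $(B)$: one needs the polygon Law of Large Numbers in a form strong enough to guarantee convergence \emph{in probability} (not merely in expectation) of the one-dimensional cross-sectional empirical measure, together with the mild regularity—continuity and non-vanishing of all three densities at $(\mathbf{x},\boldsymbol{\eta})$—that places the point in the liquid region where Theorem \ref{Theorem_main_intro}(2) is applicable.
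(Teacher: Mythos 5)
Your proposal is correct and follows essentially the same route as the paper: the Gibbs property realizes the restriction of the uniform measure to the covering trapezoid as a measure $\mathcal P(N)$ of the required type, hypothesis $(B)$ is verified from the Cohn--Kenyon--Propp law of large numbers (the empirical boundary measure is read off from the limiting height function along the trapezoid's right boundary), and Theorem \ref{Theorem_main_intro} then gives the bulk limit with the slope identified with that of the limit shape of $\Omega(L)$. This is exactly the combination of Theorems \ref{Theorem_LLN}, \ref{Theorem_main_bulk} and \ref{Theorem_LLN_bulk} carried out in the paper's Theorem \ref{Theorem_tilings}.
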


Let us elaborate on the notion of a part of $\Omega(L)$ covered by a trapezoid. Of
course, one can always consider a huge trapezoid, such that the the entire domain
$\Omega(L)$ will be inside. However, that's not what we want. We rather require the
part to be such that the restrictions of the uniformly random tilings of the entire
domain to this part are described by Gibbs probability measures on tilings of the
trapezoid in the context of Theorem \ref{Theorem_main_intro}. In particular, three
 sides of such trapezoid should belong to the same lines as sides of $\Omega(L)$,
 cf.\ Figure \ref{Fig_covered_domains}. Section \ref{Section_Main_proof} explains
 this notion in more details and culminates in Theorem \ref{Theorem_tilings} which
 is a refinement of Corollary \ref{Corollary_tilings}.

\begin{remark}
 Many polygonal domains can be \emph{completely covered} by trapezoids,
 which implies the convergence to the ergodic translation--invariant Gibbs measures
 \emph{everywhere} in the liquid region, cf.\ Figure \ref{Fig_covered_domains} for
 examples. Yet more complicated domains are covered only partially, cf.\ Figure
 \ref{Fig_not_covered_domain}.
\end{remark}
\begin{remark}
 For specific polygons, which are covered by a \emph{single} trapezoid, an analogue of Corollary
 \ref{Corollary_tilings} was previously proven in \cite{Petrov-curves}. For a class of
 (non-polygonal) domains, such that the limit shape has \emph{no frozen regions} an analogue of
 Corollary \ref{Corollary_tilings} was previously proven in \cite{Kenyon_GFF}. A general conjecture
 that the bulk asymptotic behavior of Corollary \ref{Corollary_tilings} should hold
 \emph{universally} for tilings of finite planar domains dates back to \cite{CKP}.
\end{remark}
\begin{remark}
 The tiled domains do not have to be polygonal, see Theorem \ref{Theorem_tilings}
 for the detailed formulation. The assumption of being simply--connected is also
 probably not essential, however, most Law of Large Numbers type theorems in the
 literature stick to this assumption for simplicity, and so we have to use it here
 as well. There are examples of domains with holes for which the Law of Large Numbers is explicitly
 known (see e.g.\ \cite[Section 9.2]{BGG}), and for them Corollary \ref{Corollary_tilings} holds.
\end{remark}

\begin{figure}[t]
\begin{center}
 {\scalebox{0.6}{\includegraphics{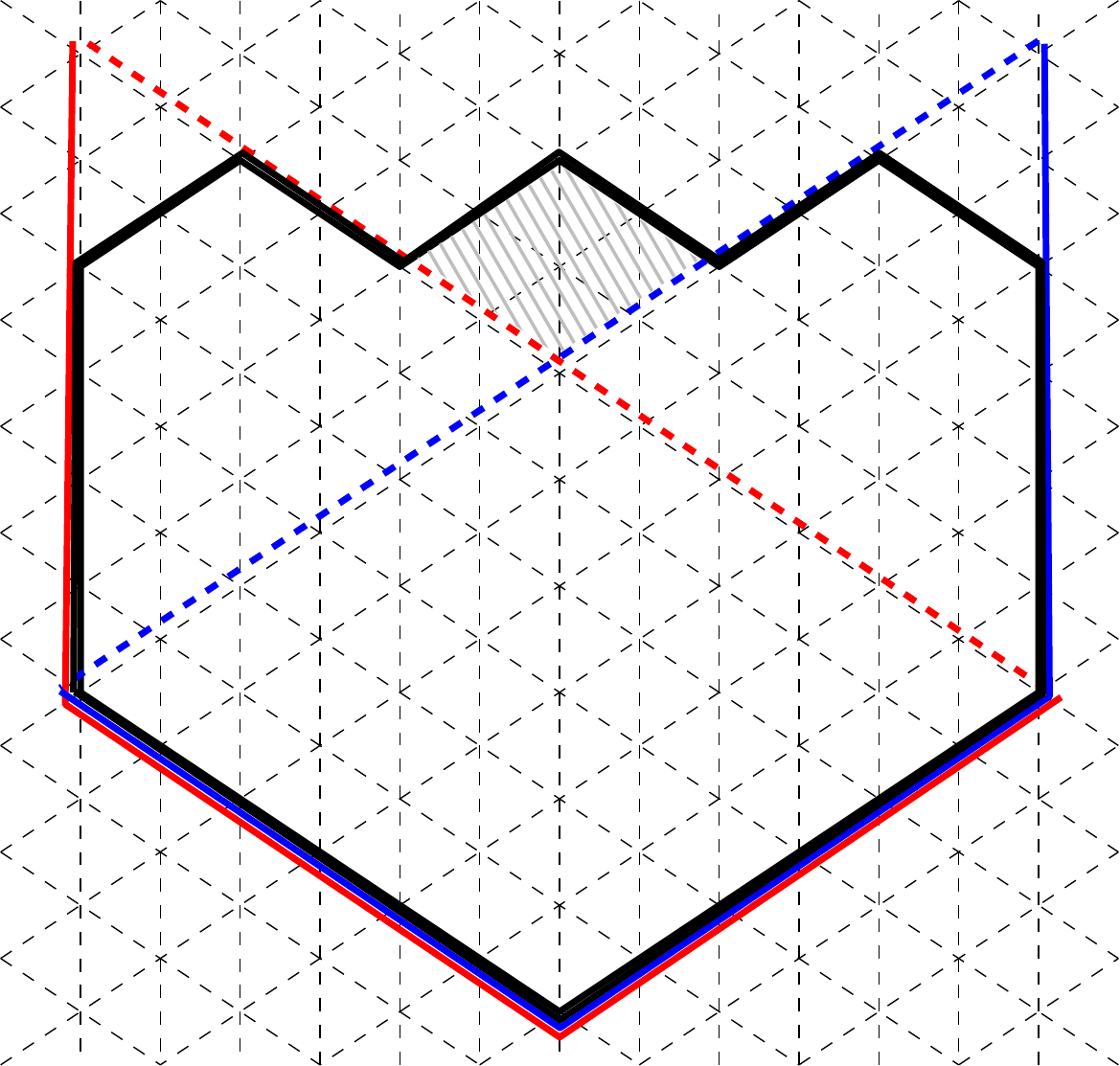}}}
\end{center}
 \caption{Most of this domain is covered by two trapezoids, yet we can not cover the shaded gray
 region. Therefore, Corollary \ref{Corollary_tilings} applies everywhere except the gray
 region.
  \label{Fig_not_covered_domain}}
\end{figure}

Another example comes from the representation theory. Recall that the irreducible representations
of the $N$--dimensional unitary group $U(N)$ are parameterized by $N$--tuples of integers
$\lambda_1\ge \lambda_2\ge \dots\ge\lambda_N$ called \emph{signatures}, see e.g.\ \cite{Weyl}. It
is convenient to shift the coordinates by introducing strictly ordered coordinates
$\ell_i=\lambda_i-i$, $i=1,\dots,N$. We denote the corresponding irreducible representation through
$T_\ell$. Such representation has a distinguished \emph{Gelfand--Tsetlin basis}, parameterized by
the \emph{Gelfand--Tsetlin patterns}, which are in bijection with lozenge tilings of trapezoidal
domains as above. Here the coordinates of the horizontal lozenges sticking out of the right
boundary of the domain are precisely the label $\ell_1>\ell_2>\dots>\ell_N$ of the representation,
see \cite{BP} and also Section \ref{Section_GT_assy} for the details. In particular, the dimension
$\dim(T_\ell)$ equals the total number of the tilings of trapezoidal domain with fixed $N$ lozenges
along the right boundary.

If we take any \emph{reducible} (finite--dimensional) representation $T$ of $U(N)$, then we can
decompose it into irreducible components $T=\oplus_{\ell} c_\ell  T_\ell$ and further consider the
Gelfand--Tsetlin basis in each of them. Recalling the bijection with tilings, we thus determine for
each lozenge tiling of trapezoidal domain a non-negative number, which is equal to $c_\ell$ with
$\ell$ encoding the horizontal lozenges along the right vertical boundary of the domain. All these
numbers sum up to $\dim(T)$, so that after dividing by $\dim(T)$ they define a probability measure
$\mathcal P^T$ on lozenge tilings. Varying $T$ in this construction one arrives at several
intriguing probability distributions.

The first example is given by the tensor product of two irreducible representations.
\begin{corollary} \label{Corollary_Tensor_products}
 Let $\ell(N)$, $\kappa(N)$ be two sequences of signatures (in the notation with strictly increasing coordinates) such that
 \begin{itemize}
 \item  The numbers $\frac{\ell_i(N)}{N}$, $\frac{\kappa_i(N)}{N}$, $1\le i \le N$ are uniformly
  bounded.
  \item There exist two monotonous functions $f^{\ell}$, $f^{\kappa}$ with finitely many points of discontinuity and such that
 $$
  \lim_{N\to\infty} \frac{1}{N}\sum_{i=1}^N \left|
  f^{\ell}\left(\frac{i}{N}\right)-\frac{\ell_i(N)}{N}\right|=   \lim_{N\to\infty} \frac{1}{N}\sum_{i=1}^N \left|
  f^{\kappa}\left(\frac{i}{N}\right)-\frac{\kappa_i(N)}{N}\right|.
 $$
 \end{itemize}
Consider the \emph{tensor product} $T(N)=T_{\ell(N)}\otimes T_{\kappa(N)}$ and the corresponding
measure on lozenge tilings $\mathcal P^{T(N)}$. Then the conclusion of Theorem
\ref{Theorem_main_intro} is valid for  $\mathcal P^{T(N)}$ as $N\to\infty$.
\end{corollary}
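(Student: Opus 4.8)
The plan is to deduce the corollary directly from Theorem \ref{Theorem_main_intro} by checking that its two hypotheses $(A)$ and $(B)$ hold for the measures $\mathcal P^{T(N)}$; the local statement then follows with no further work. All the genuine content therefore lies in understanding the first-order (global) behaviour of the tensor-product decomposition, not in any new local analysis.

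Condition $(A)$ is immediate. By the Littlewood--Richardson rule, every signature $\nu$ whose component $T_\nu$ occurs in $T_{\ell(N)}\otimes T_{\kappa(N)}$ has coordinates confined (up to the bounded shift in the convention $\ell_i=\lambda_i-i$) to the interval bounded below by $\ell_N(N)+\kappa_N(N)$ and above by $\ell_1(N)+\kappa_1(N)$. Since $\ell_i(N)/N$ and $\kappa_i(N)/N$ are uniformly bounded, the spread of the $\nu_i$ is $O(N)$, so the height $C(N)$ of the trapezoid carrying these lozenges can be taken to be $O(N)$, giving that $C(N)/N$ stays bounded.

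For condition $(B)$ the key structural fact is the factorization of Schur generating functions. The distribution of the boundary data $\nu$ under $\mathcal P^{T(N)}$ assigns to $\nu$ the weight $c_\nu\,\dim(T_\nu)/\dim(T(N))$, and since the character of a tensor product is the product of characters while $\dim(T_{\ell}\otimes T_{\kappa})=\dim(T_{\ell})\dim(T_{\kappa})$, the associated Schur generating function factorizes as $s_{\ell(N)}(x)\,s_{\kappa(N)}(x)\big/\big(s_{\ell(N)}(1^N)\,s_{\kappa(N)}(1^N)\big)$, that is, as the product of the normalized characters of the two tensor factors. I would first translate the averaged $L^1$ hypothesis on the profiles $f^\ell,f^\kappa$ into weak convergence of the empirical measures $\frac1N\sum_i\delta_{\ell_i(N)/N}$ and $\frac1N\sum_i\delta_{\kappa_i(N)/N}$ to the deterministic measures $\mathbf m_\ell=(f^\ell)_*\mathrm{Leb}_{[0,1]}$ and $\mathbf m_\kappa=(f^\kappa)_*\mathrm{Leb}_{[0,1]}$. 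The multiplicativity above then places us exactly in the setting of the Law of Large Numbers for quantized free convolution established in \cite{BufGor}: it yields that the random empirical measure $\mu_{\mathcal P^{T(N)}}$ concentrates and converges weakly, in probability, to the deterministic measure $\mu=\mathbf m_\ell\boxplus_{\mathrm q}\mathbf m_\kappa$, which is precisely hypothesis $(B)$.

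The main obstacle is this concentration step: one must upgrade deterministic convergence of the input signatures to a genuine Law of Large Numbers for the random output of the decomposition, and identify its limit. The mechanism is the factorization of normalized characters together with the asymptotic analysis of Schur generating functions from \cite{BufGor}; the care goes into verifying that the weaker averaged $L^1$ assumption on $f^\ell,f^\kappa$ (rather than, say, uniform convergence of the coordinates) still suffices to feed that machinery, and into checking that the resulting quantized free convolution is a probability measure regular enough that points with all three slope densities continuous and nonzero genuinely exist, so that part $(2)$ of Theorem \ref{Theorem_main_intro} is not vacuous. Once $(A)$ and $(B)$ are in hand, Theorem \ref{Theorem_main_intro} applies verbatim and yields the corollary.
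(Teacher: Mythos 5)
Your proposal is correct and takes essentially the same route as the paper: the paper's proof is exactly the observation that hypothesis $(B)$ is supplied by the Law of Large Numbers for tensor products from \cite[Theorem 1.1]{BufGor} (with $(A)$ clear from the support of the Littlewood--Richardson decomposition), after which Theorem \ref{Theorem_main_intro} applies verbatim. Your additional remarks on the factorization of normalized characters and on the non-vacuity of part $(2)$ are consistent elaborations of that same argument rather than a different approach.
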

The proof of Corollary \ref{Corollary_Tensor_products} is a combination of Theorem
\ref{Theorem_main_intro} with \cite[Theorem 1.1]{BufGor} describing the Law of Large Numbers for
the tensor products.

We remark that since Theorem \ref{Theorem_main_intro} is restricted to
$0<\boldsymbol{\eta}<1$, we do not get any information about $\boldsymbol{\eta}=1$,
i.e.\ the behavior of the lozenges at the right boundary of the domain.
\emph{Conjecturally} this behavior should be similar, cf.\ \cite{BES} for the recent
results in this direction for sums of random matrices, which are continuous
analogues of tensor products (see e.g.\ \cite[Section 1.3]{BufGor}).

\medskip

Another celebrated representation of $U(N)$ is $(\mathbb C^N)^{\otimes n}$, which was intensively
studied in the context of the \emph{Schur--Weyl duality}, cf. \cite{Weyl}.

\begin{corollary} \label{Corollary_Schur-Weyl}
Consider the representation of $U(N)$ in $(\mathbb C^N)^{\otimes n}$ by natural action in each
component of the tensor product. Suppose that as $N\to\infty$, $n$ varies in such a way that
$\lim_{N\to\infty} n/N^2=c>0$. Let $\mathcal P^N$ denote the probability measure on lozenge tilings
corresponding to the decomposition of this representation. Then the conclusion of Theorem
\ref{Theorem_main_intro} is valid for  $\mathcal P^{N}$ as $N\to\infty$.
\end{corollary}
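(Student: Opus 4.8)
The plan is to verify the two hypotheses $(A)$ and $(B)$ of Theorem~\ref{Theorem_main_intro} for the measure $\mathcal P^N$ and then invoke that theorem directly. The starting point is the explicit \emph{Schur generating function} of the decomposition. Since the character of the natural module $\mathbb C^N$ is the first power sum $x_1+\dots+x_N$ and characters multiply under tensor products, the character of $(\mathbb C^N)^{\otimes n}$ equals $(x_1+\dots+x_N)^n$, while its dimension is $N^n$. Recalling that for a representation $T=\oplus_\lambda c_\lambda T_\lambda$ the probability weight of a signature is $c_\lambda\dim(T_\lambda)/\dim(T)$ and that $\dim(T_\lambda)=s_\lambda(1,\dots,1)$, the generating function
$$
 S_N(x_1,\dots,x_N)=\sum_\lambda \frac{c_\lambda\,\dim(T_\lambda)}{\dim(T)}\,\frac{s_\lambda(x_1,\dots,x_N)}{s_\lambda(1,\dots,1)}=\frac{\chi_{(\mathbb C^N)^{\otimes n}}(x_1,\dots,x_N)}{N^n}
$$
collapses to the remarkably simple product form $S_N(x)=\bigl((x_1+\dots+x_N)/N\bigr)^n$, and the induced distribution on the top row $\lambda$ is the classical Schur--Weyl measure, with weight proportional to $\dim(V^{S_n}_\lambda)\dim(V^{U(N)}_\lambda)$.

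The heart of the argument is condition $(B)$, the Law of Large Numbers for the empirical measure $\mu_{\mathcal P^N}$ of the shifted top row $\ell_i=\lambda_i-i$. I would extract it from the method of Schur generating functions developed in \cite{BufGor}: setting a single variable to $x=e^y$ and the remaining $N-1$ variables to $1$, a direct expansion gives
$$
 \frac{1}{N}\log S_N(e^y,1,\dots,1)=\frac{n}{N}\log\!\Bigl(1+\frac{e^y-1}{N}\Bigr)\xrightarrow[N\to\infty]{}c\,(e^y-1),
$$
using precisely the assumption $n/N^2\to c$. The existence and analyticity of this limit is exactly the hypothesis under which the Schur generating function machinery yields convergence (weakly, in probability) of $\mu_{\mathcal P^N}$ to a deterministic, compactly supported measure $\mu$ whose moments are read off from the derivatives of $c(e^y-1)$; this recovers the known limit shape of the Schur--Weyl measure in the regime $\sqrt n/N\to\sqrt c$ (going back to Biane). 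This verifies $(B)$.

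Condition $(A)$ is then immediate. The limit measure $\mu$ has bounded support, so the right edge $\lambda_1/N$ stays bounded in probability; choosing $C(N)=\lceil KN\rceil$ with $K$ larger than the right endpoint of the support of $\mu$ (and using that, by the ``adding a row of lozenges'' correspondence discussed above, the value of $C$ is immaterial once it is large enough) gives $C(N)/N$ bounded. With $(A)$ and $(B)$ in hand, Theorem~\ref{Theorem_main_intro} applies verbatim and yields the claim.

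The main obstacle I anticipate lies not in any single step but in making the Law of Large Numbers of condition $(B)$ rigorous in the required form: the Schur generating function method must be quoted (or re-derived) so as to deliver convergence of $\mu_{\mathcal P^N}$ as a \emph{random} measure in probability, together with the mild regularity and boundedness that are needed, rather than merely convergence of expectations or of individual moments. Because the generating function here is as simple as possible---a pure power of $(x_1+\dots+x_N)/N$---I expect the analytic hypotheses of the relevant theorem in \cite{BufGor} to be checked with essentially no effort, so that the bulk of the work reduces to correctly invoking that LLN and confirming the compactness of the support of $\mu$.
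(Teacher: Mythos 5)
Your proposal is correct and follows essentially the same route as the paper, which simply combines Theorem \ref{Theorem_main_intro} with the Law of Large Numbers for the Schur--Weyl measure from \cite{Biane} (equivalently \cite[Theorem 5.1]{BufGor}, i.e.\ exactly the Schur generating function computation $S_N=((x_1+\dots+x_N)/N)^n$ with $\frac1N\log S_N(e^y,1,\dots,1)\to c(e^y-1)$ that you carry out). Your extra care with condition $(A)$ --- restricting to the high-probability event that $\lambda_1=O(N)$ so the tilings fit in a trapezoid with $C(N)/N$ bounded --- is a technicality the paper glosses over, and is handled correctly.
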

The proof of Corollary \ref{Corollary_Schur-Weyl} is a combination of Theorem
\ref{Theorem_main_intro} with the Law of Large Numbers for the decomposition of $(\mathbb
C^N)^{\otimes n}$ obtained in \cite{Biane}, see also \cite[Theorem 5.1]{BufGor}

\medskip

Corollaries \ref{Corollary_tilings}, \ref{Corollary_Tensor_products},
\ref{Corollary_Schur-Weyl} do no exhaust the list of possible applications of
Theorem \ref{Theorem_main_intro} and we refer e.g.\ to \cite{BBO}, \cite{Panova_1},
\cite{Panova_2} for some further examples of the situations where this theorem
holds. Further, Theorem \ref{Theorem_main_intro} also has consequences for
\emph{domino} tilings on square grid, as in \cite{BufKnizel}. In more details, it
implies that in the particle process corresponding to the rectangular parts of
domains tiled with dominos, the 1d bulk scaling limit along a section is universally
governed by the discrete Sine process, see Section \ref{Section_Main_proof} for the
definition of the discrete Sine process and \cite[Appendix B]{BufKnizel} for the
exact statement.

\subsection{Discussion}

One way to interpret Theorem \ref{Theorem_main_intro} is that straight boundaries
lead to a \emph{smoothing effect}. Indeed, we do not know much about the local
structure of random measures $\mu_{\mathcal P(N)}$, and it can be anything, yet as
soon as we move macroscopic distance towards the straight boundary, the local
measures become universal. This interpretation can be put into a wider context by
observing that the stochastic process of $N-t$ horizontal lozenges on $(N-t)$th
(from the right) vertical line of a trapezoid $\Omega_{N,C}$  is a Markov chain in
time variable $t$, as follows from the Gibbs property. Therefore, we see that this
Markov chain has a \emph{homogenization} property, i.e.\ its local statistics become
universal. In a parallel work \cite{GorPet} we study a similar homogenization for
the families of non-intersecting paths on very short time scales.

In the continuous setting there is a close analogy with the homogenization
properties of \emph{Dyson Brownian Motion}. Such property was first observed in
\cite{J_Wigner}, and used there for proving the universality of the local statistics
for certain ensembles of Wigner random matrices. More recently, such homogenization
was developed much further and has led to many exciting universality results, see
e.g.\ \cite{Shch}, \cite{BEY}, \cite{LY} and references therein.

\smallskip

It is natural to ask whether an analogue of Theorem \ref{Theorem_main_intro} can
hold for other conjectural universal features of the lozenge tilings, such as the
fluctuations of the frozen boundary or global fluctuations of the height function.
Regrettably, the answer is \emph{no}, the knowledge of the Law of Large Numbers at
the right boundary of the trapezoid domain is not enough for proving other universal
features. A simple way to see that is to take a Bernoully random variable
$\varsigma$ and to add $\lfloor \varsigma \sqrt{N} \rfloor$ to the coordinates of
the lozenges at the right boundary. This addition clearly does not change the Law of
Large Numbers (the assumption of Theorem \ref{Theorem_main_intro}), yet it will lead
to the same shift everywhere in the tiling, and therefore will influence all the
conjectural limiting behaviors, \emph{except} for the local bulk limits that we
study here. We also refer to \cite[Section 1.9]{DJM} for a related discussion.

\subsection{Our methods}
 The proof of Theorem \ref{Theorem_main_intro} is based on several ingredients. The
 first one is the double contour integral expression of \cite{Petrov-curves} for the
 correlation kernel of the determinantal point process describing the uniformly
 random lozenge tilings of trapezoids with \emph{fixed} positions of
 horizontal lozenges on the right boundary. Our asymptotic analysis of this kernel
 reveals a new fact:  the bulk
asymptotic behavior depends only on the global scale law of large numbers for the
right boundary. This paves a way to pass from fixed to random horizontal lozenges on
the right boundary. Let us emphasize that in the latter random boundary case the
correlation functions of the point process of lozenges inside the domain do not have
to be determinantal (and we do not expect any explicit formulas for them); yet, this
turns out to be irrelevant for our asymptotic analysis.

 We also
need to make a link between the slope of the local limiting Gibbs measure and the slope in the
global Law of Large Numbers, for which we make use of the description of \cite{BufGor} of the limit
shape through the quantization of the Voiculescu $R$--transform.

\subsection{Acknowledgements}

The author would like to thank Alexei Borodin, Alexey Bufetov and Leonid Petrov for
helpful discussion. This work was partially supported by the NSF grant DMS-1407562
and by the Sloan Research Fellowship.

\section{Gelfand--Tsetlin patterns with fixed top row}

\label{Section_GT_assy}

For a fixed $N=1,2,\dots$ let $\t=(\t_1>\t_2>\dots>\t_N)$ be an $N$ tuple of integers. A
\emph{Gelfand--Tsetlin pattern} with top row $\t$ is an array of $N(N+1)/2$ integers $x^j_i$, $1\le
i \le j \le N$, which satisfy the interlacing condition
$$
  x^{i+1}_j \ge x^i_j > x^{i+1}_{j+1},\quad 1 \le i \le j \le N-1,
$$
and the top row condition $x^N_i=\t_i$, $i=1,\dots,N$. Such patterns are in bijection with lozenge
tilings of certain specific domains, shown in Figure \ref{Fig_GT_tiling}; the bijection is given by
positions of horizontal lozenges.\footnote{Note that in many articles an alternative
parameterization $y^i_j=x^i_j+j$ is used. We stick to the notations of \cite{Petrov-curves}, as we
use many ideas from that paper.}

\begin{figure}[h]
\begin{center}
 {\scalebox{0.9}{\includegraphics{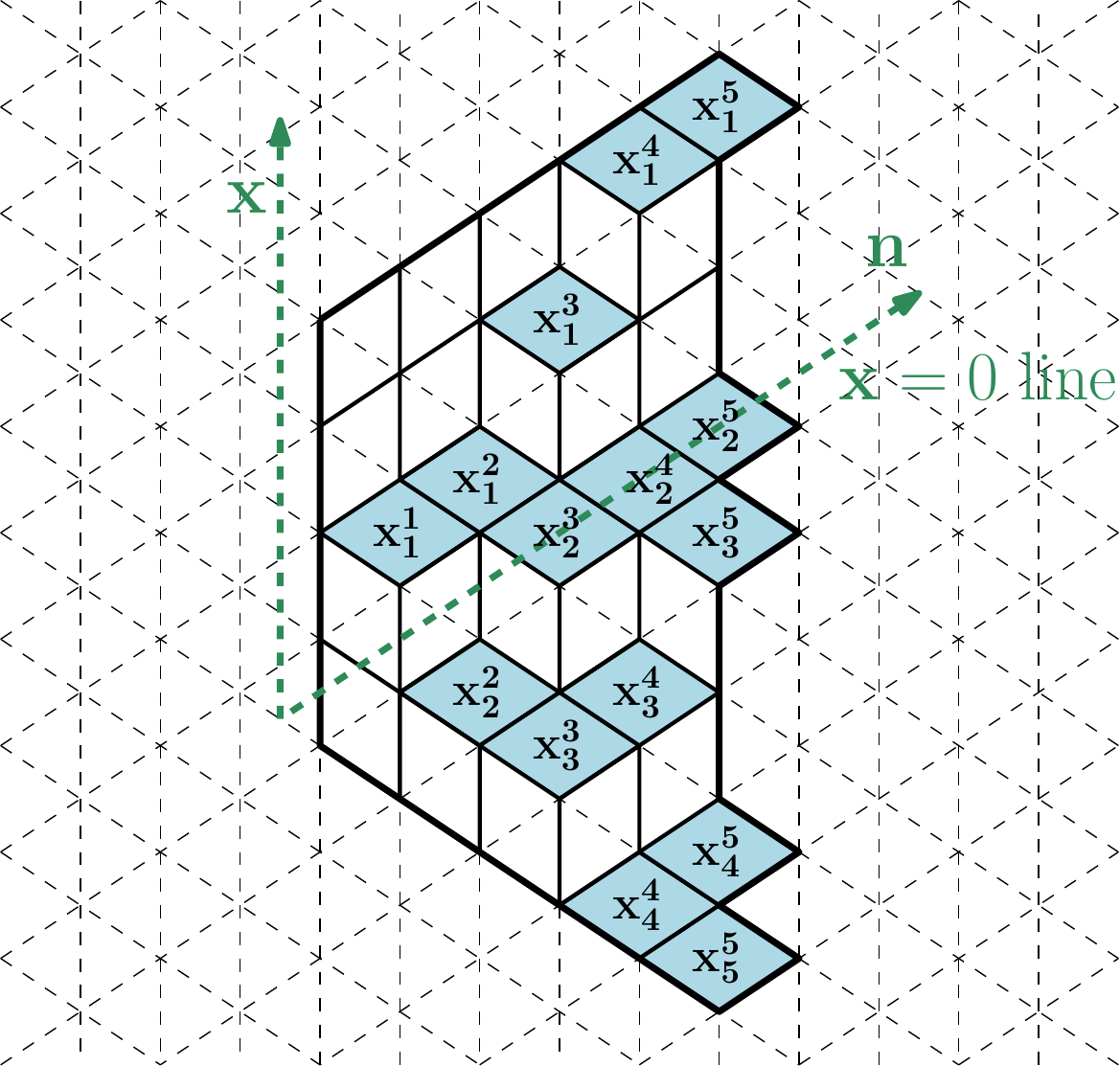}}}
\end{center}
 \caption{A lozenge tiling corresponding to a Gelfand--Tsetlin pattern with $N=5$ and
 top row $x^5=\t=(3,0,-1,-4,-5)$.\label{Fig_GT_tiling}}
\end{figure}

In this section we investigate the asymptotic behavior of \emph{uniformly random}
Gelfand--Tsetlin patterns with fixed top row as $N\to\infty$. We adapt important
ideas from \cite{Petrov-curves}; yet the results of that article are not enough for
our purposes, so we need to generalize them and supplement with new considerations.

\bigskip

The following theorem of \cite{Petrov-curves} is our starting point, see also \cite{DM}.

\begin{theorem}[{\cite[Theorem 5.1]{Petrov-curves}}] \label{Theorem_Leo}
 Fix an $N$--tuple of integers $\t=(\t_1>\t_2>\dots\t_N)$, and let $\{x_i^j\}$, $1\le i \le j \le N$ be
 uniformly random Gelfand--Tsetlin pattern with top row $\t$. Then for any $k$ and any collection of distinct pairs of
 integers $(x(1),n(1)),\dots,(x(k),n(k))$ with $1\le n(i)<N$, $i=1,\dots,k$, we have
 $$
  {\rm Prob } \left[ x(i)\in \{x_1^{n(i)},x_2^{n(i)},\dots,x_j^{n(i)}\},\,
  i=1,\dots,k\right]=\det_{i,j=1}^k \left[ K(x(i),n(i);x(j),n(j))\right],
 $$
 where
 \begin{multline} \label{eq_correlation_kernel}
 K(x_1,n_1;x_2,n_2)= -\1_{n_2<n_1} \1_{x_2\le x_1}
 \frac{(x_1-x_2+1)_{n_1-n_2-1}}{(n_1-n_2-1)!} + \frac{(N-n_1)!}{(N-n_2-1)!} \\
 \times \frac{1}{(2\pi \ii)^2} \oint_{\C(x_2,\dots,\t_1-1)} dz \oint_{\C(\infty)} dw
 \frac{(z-x_2+1)_{N-n_2-1}}{(w-x_1)_{N-n_1+1}} \, \frac{1}{w-z} \prod_{r=1}^N
 \frac{w-\t_r}{z-\t_r},
\end{multline}
the contour $\C(x_2,\dots,\t_1-1)$ encloses points $x_2,x_2+1,\dots,\t_1-1$ and no other
singularities of the integrand, and the contour $\C(\infty)$ is a very large circle; both contours
have positive orientation.
\end{theorem}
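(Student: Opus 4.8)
The plan is to realize the uniform measure on Gelfand--Tsetlin patterns with fixed top row $\t$ as a determinantal point process and then to compute its correlation kernel explicitly; this places the statement in the well-understood landscape of discrete orthogonal polynomial ensembles and Schur-type processes. First I would record the factorized form of the measure: a pattern is a chain $x^1\prec x^2\prec\dots\prec x^N=\t$ of interlacing integer arrays, so because the measure is uniform its weight is simply $\prod_{n=1}^{N-1}\1[x^n\prec x^{n+1}]$, with $\t$ entering as a frozen top boundary. The nuisance that the $n$-th level carries only $n$ particles is standard to remove: I would append to each level $N-n$ \emph{virtual} particles escaping to $-\infty$, after which every level has exactly $N$ points and each interlacing indicator becomes $\det[\phi(x^n_i,x^{n+1}_j)]$ for the one-variable single-step kernel $\phi(a,b)=\1_{b\le a}$ (up to the orientation conventions of the inequalities defining the pattern). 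This rewrites the measure as a product of determinants of transition kernels, which is exactly the Eynard--Mehta (equivalently Lindström--Gessel--Viennot) setting.

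Next I would invoke the Eynard--Mehta theorem for the associated non-intersecting paths. It yields that $\{x^n_i\}$ is determinantal with a kernel of the schematic form
$$
K(x_1,n_1;x_2,n_2)=-\1_{n_2<n_1}\,\phi^{(n_2,n_1)}(x_2,x_1)+\sum_{a,b}\Psi^{n_1}_{x_1}(a)\,[M^{-1}]_{a,b}\,\Phi^{n_2}_{x_2}(b),
$$
where $\phi^{(n_2,n_1)}$ is the convolution of the single-step kernels from level $n_2$ to level $n_1$, the functions $\Psi,\Phi$ propagate from levels $n_1,n_2$ up to the fixed top boundary, and $M$ is the finite Gram matrix coupling them through $\t$. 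The purely combinatorial first term is easy to identify: the repeated convolution of $\phi(a,b)=\1_{b\le a}$ across the $n_1-n_2$ levels produces precisely the Pochhammer ratio $\frac{(x_1-x_2+1)_{n_1-n_2-1}}{(n_1-n_2-1)!}$ together with the indicators $\1_{n_2<n_1}\1_{x_2\le x_1}$, matching the first summand of \eqref{eq_correlation_kernel}.

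The heart of the argument, and the step I expect to be the main obstacle, is the explicit inversion of the Gram matrix $M$ and the re-summation of $\sum_{a,b}\Psi\,[M^{-1}]\,\Phi$ into the stated double contour integral. Here I would exploit that the transition kernels act by finite differences and therefore diagonalize against the Newton/monomial data attached to the points $\t_1,\dots,\t_N$. The polynomial $\prod_{r=1}^N(w-\t_r)$ and its reciprocal in $z$ then arise as the symbol of the top-row boundary, the ratios $\frac{(z-x_2+1)_{N-n_2-1}}{(w-x_1)_{N-n_1+1}}$ encode the propagation from levels $n_2,n_1$ up to level $N$ (with the normalizing prefactor $\frac{(N-n_1)!}{(N-n_2-1)!}$ coming from these boundary propagators), and the resolvent $\frac{1}{w-z}$ is the generating kernel tying the two biorthogonal families together. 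Concretely I would represent the biorthogonal functions and $[M^{-1}]$ as single residues, so that the finite double sum becomes a double residue, and then deform these into the contours $\C(\infty)$ in $w$ (collecting the behavior at infinity, i.e.\ the ``incoming'' variable, whose poles sit at $w=x_1,x_1-1,\dots$) and $\C(x_2,\dots,\t_1-1)$ in $z$ (enclosing exactly the boundary poles $z=\t_r$ with $\t_r$ in the relevant range). The delicate bookkeeping on which everything rests is verifying that these deformations cross no spurious poles---in particular that $\C(x_2,\dots,\t_1-1)$ encircles $x_2,\dots,\t_1-1$ and nothing else, and that $w$ and $z$ never collide across their respective contours.

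As a fallback that sidesteps the explicit inversion, I would keep in reserve a direct verification: one \emph{guesses} the kernel \eqref{eq_correlation_kernel} and checks that it reproduces the correct one- and two-level marginals and satisfies the defining biorthogonality relations of the scheme, which by uniqueness of the correlation functions of a determinantal process pins the answer down. This route trades the combinatorial inversion of $M$ for a pair of contour-integral identities, but either way the essential difficulty is the same residue calculus that converts the finite biorthogonal data attached to $\t$ into the compact two-variable integral.
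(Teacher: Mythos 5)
This theorem is not proved in the paper at all: it is imported verbatim from \cite{Petrov-curves} (Theorem 5.1 there) and used as the starting point of Section 2, so there is no internal proof to compare your attempt against. Measured instead against the derivation in the cited source, your outline follows the same architecture that Petrov actually uses: pad the $n$-th level with $N-n$ virtual particles so that every interlacing indicator becomes a determinant of the one-step kernel $\1_{y\le x}$, apply the Eynard--Mehta theorem to get a kernel of the form $-\1_{n_2<n_1}\phi^{(n_2,n_1)}+\sum_{a,b}\Psi\,[M^{-1}]\,\Phi$, identify the convolution term with the Pochhammer summand (your computation of the $(n_1-n_2)$-fold convolution of the step function is correct), and resum the second term into the double contour integral. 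So the skeleton is right and consistent with the proof the paper relies on.

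The genuine gap is the step you yourself flag as the heart of the matter: the inversion of the Gram matrix and its conversion into the factor $\prod_{r}\frac{w-\t_r}{z-\t_r}$ with the contour $\C(x_2,\dots,\t_1-1)$. Saying the transition kernels ``diagonalize against the Newton/monomial data attached to $\t_1,\dots,\t_N$'' is a statement of intent, not an argument: for an arbitrary strictly decreasing integer top row there is no orthogonal-polynomial structure to lean on, and the actual mechanism is that the Gram matrix is a matrix of values of explicit polynomials at the nodes $\t_1,\dots,\t_N$, so its inverse is given by Lagrange interpolation at those nodes --- this is precisely where the residues of $1/\prod_r(z-\t_r)$, and hence the prescription that the $z$-contour enclose $x_2,\dots,\t_1-1$ and \emph{no other} singularities, come from. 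None of that is carried out in your proposal, and it is the entire technical content of the theorem; the prefactor $\frac{(N-n_1)!}{(N-n_2-1)!}$ and the ratio $\frac{(z-x_2+1)_{N-n_2-1}}{(w-x_1)_{N-n_1+1}}$ likewise require the explicit forms of $\Psi$ and $\Phi$, not just the observation that they ``propagate to level $N$''. Your fallback --- guess the kernel and verify the biorthogonality relations by residue calculus --- is legitimate and in fact close in spirit to how the verification is organized in \cite{Petrov-curves}, but as written it too defers exactly the contour-integral identities that constitute the proof.
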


The content of this section is the asymptotic analysis of the correlation kernel
\eqref{eq_correlation_kernel}. Although we use a somewhat standard steepest descent
approach to such analysis (cf.\ \cite{Okounkov}, \cite{BG_Lectures},
\cite{Petrov-curves}), but the technical details are delicate, as we need to deal
with arbitrary $N$--tuples $\t$.

\bigskip

We start by rewriting the integrand in the double integral of \eqref{eq_correlation_kernel} as
$$
\frac{1}{w-z} \cdot \frac{1}{(w-x_1)(z-x_2+N-n_2)} \cdot \exp(G_2(z)-G_1(w))
$$
with
\begin{equation}
\label{eq_action_logarithm} G_\kappa(z)=\sum_{a=1}^{N-n_\kappa} \ln(z-x_\kappa+a) - \sum_{r=1}^N
\ln(z-\t_r), \quad \kappa=1,2.
\end{equation}
 Let us analyze the zeros
of the derivative of $G_\kappa(z)$, i.e.\ the solutions to
\begin{equation}
\label{eq_critical_equation}
 \sum_{a=1}^{N-n_\kappa} \frac{1}{z-x_\kappa+a}= \sum_{r=1}^N \frac{1}{z-\t_r}
\end{equation}
\begin{lemma} \label{Lemma_number_of_roots}
 The equation \eqref{eq_critical_equation} has either $0$ or $2$ non-real roots. In
 the latter case the non-real roots are complex conjugate to each other.
\end{lemma}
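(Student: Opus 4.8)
The first assertion is immediate: equation \eqref{eq_critical_equation} has real coefficients (all the $x_\kappa$, $n_\kappa$, $\t_r$ are real), so its non-real roots occur in complex-conjugate pairs. Everything therefore reduces to proving that there is at most one such pair.

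My plan is to reinterpret the roots of \eqref{eq_critical_equation} as the critical points of a rational function and then count the real ones by Rolle's theorem. Set $A=\{\t_1,\dots,\t_N\}$ and $B=\{x_\kappa-1,\,x_\kappa-2,\,\dots,\,x_\kappa-(N-n_\kappa)\}$, and introduce
$$ H(z)=\frac{\prod_{r=1}^N (z-\t_r)}{\prod_{a=1}^{N-n_\kappa}(z-x_\kappa+a)}. $$
Then the difference of the two sides of \eqref{eq_critical_equation} is exactly $H'(z)/H(z)$, so the roots we must count are the zeros of $H'$. Cancelling the common factors coming from $A\cap B$, I write $H=\tilde P_A/\tilde P_B$ in lowest terms, where $\tilde P_A,\tilde P_B$ have simple real zeros precisely on $A\setminus B$ and $B\setminus A$. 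Then $H'$ has the same zeros as the Wronskian-type polynomial $W=\tilde P_A'\,\tilde P_B-\tilde P_A\,\tilde P_B'$; a short computation shows that $W$ does not vanish at the points of $A\triangle B$ and that its leading coefficient equals $n_\kappa\neq 0$, so $\deg W=M-1$, where $M=|A\triangle B|$ is the number of distinct zeros and poles of $H$. Hence \eqref{eq_critical_equation} has exactly $M-1$ roots counted with multiplicity.

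Next I count real roots. I list the points of $A\triangle B$ in decreasing order and label each as a ``zero'' or a ``pole'' of $H$. Between two consecutive zeros, $H$ vanishes at both ends with no pole in between, so Rolle's theorem yields a zero of $H'$ in the open interval; between two consecutive poles the same argument applied to $1/H$ produces a zero of $H'$. Each same-type adjacent pair thus contributes a distinct real root lying in its own interval, so the number of real roots is at least $(M-1)-t$, where $t$ is the number of sign changes in the zero/pole label sequence.

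The crux is to show $t\le 2$, and this is where the special structure of $B$ enters: $B$ is a block of \emph{consecutive} integers $[x_\kappa-(N-n_\kappa),\,x_\kappa-1]$. Hence any element of $A$ lying inside this block automatically lies in $B$, so it belongs to $A\cap B$ and is cancelled; consequently every point of $A\setminus B$ lies strictly to the left or strictly to the right of the block, while every point of $B\setminus A$ lies inside it. Reading the labels in decreasing order therefore produces the pattern (zeros)(poles)(zeros), which has at most two sign changes, i.e.\ $t\le 2$. This gives at least $M-3$ real roots among the $M-1$ roots of \eqref{eq_critical_equation}, so at most two are non-real; being even in number, they are either $0$ or $2$, and in the latter case conjugate. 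The one genuinely substantive point — the main obstacle — is recognizing that the ``consecutive integers'' feature of $B$ forces this block pattern and hence $t\le 2$; the remainder is bookkeeping (the degree and leading-coefficient count, the check that $W$ has no roots on $A\triangle B$, and the degenerate cases $B=\emptyset$ or $M\le 3$, which are trivial).
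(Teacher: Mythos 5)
Your proof is correct and follows essentially the same route as the paper: rewrite the equation as the logarithmic derivative of $\prod_r(z-\t_r)/\prod_a(z-x_\kappa+a)$ after cancelling common factors, note that the resulting polynomial has degree $M-1$, and produce $M-3$ distinct real roots from the gaps between consecutive same-type points, using the key observation that the block structure of $[x_\kappa-N+n_\kappa,x_\kappa-1]$ forces the zero/pole pattern (zeros)(poles)(zeros). The only cosmetic differences are that you invoke Rolle on $H$ and $1/H$ where the paper uses the intermediate value theorem on the partial-fraction sum directly, and that you explicitly verify the leading coefficient $n_\kappa\neq 0$, a point the paper leaves implicit.
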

\begin{proof}
Set $\T=\{\t_1,\t_2,\dots, \t_N\}$ and $\Tc=\mathbb Z\setminus \T$. Then
\eqref{eq_critical_equation} can be written in the form
\begin{equation}
\label{eq_critical_rewritten}
 \sum_{d\in \T \setminus [x_\kappa-N+ n_\kappa, x_\kappa-1] } \frac{1}{z-d} - \sum_{d'\in \Tc \bigcap [x_\kappa-N+ n_\kappa, x_\kappa-1]}
 \frac{1}{z-d'}=0
\end{equation}
Let $M$ denote the total number of terms in \eqref{eq_critical_rewritten}. Then
after clearing the denominators, the equation \eqref{eq_critical_rewritten} becomes
a polynomial equation of degree $M-1$. Since all the coefficients of this equation
are real, all its non-real roots split into complex--conjugate pairs. We will now
show that \eqref{eq_critical_rewritten} has at least $M-3$ real roots, which then
implies that there is at most one complex conjugate pair.

Let $d^{l}_1<\dots<d^{l}_L$ be those elements of $\T$ which are smaller than $x_\kappa-N+n_\kappa$
and let $d^r_1<\dots<d^r_R$ be those which are greater than $x_\kappa-1$. Also let
$d'_1<\dots<d'_P$ be elements of $\Tc \bigcap [x_\kappa-N+ n_\kappa, x_\kappa-1]$. Clearly,
$L+R+P=M$. Further, for each $i=1,2,\dots,L-1$ the function \eqref{eq_critical_rewritten}
continuously changes from $+\infty$ to $-\infty$ on the real interval $(d^l_i,d^l_{i+1})$ and
therefore has a zero on this interval. Similarly, there is a zero on each interval
$(d^r_i,d^r_{i+1})$, $i=1,\dots,R-1$ and on each interval $(d'_i,d'_{i+1})$, $i=1,\dots,P-1$.
Summing up, we found $(L-1)+(R-1)+(P-1)=M-3$ distinct real roots of \eqref{eq_critical_rewritten}.
\end{proof}

We will further stick to the case when \eqref{eq_critical_rewritten} has a pair of non-real roots
(the asymptotic analysis in the case when all roots are real is more delicate and is left out of
the scope of the present paper) and denote these roots $\tau_\kappa$ and $\bar \tau_\kappa$ with
convention $\Im(\tau_\kappa)>0$. Let us state the main result of this section.

\begin{theorem} \label{Theorem_Kernel_asympt}
 Fix an arbitrary real parameter $D>0$. Assume that the pairs $(x_1,n_1)$ and $(x_2,n_2)$ are such that the
 complex critical points $\tau_1$ and $\tau_2$ of $G_1(z)$ and $G_2(z)$,
 respectively, satisfy $\Im(\tau_{1,2})>D^{-1} N $ and $|\tau_{1,2}|<DN$. Further, assume $|x_1-x_2|+|n_1-n_2|<D$ and
 that the points $\t_1,\dots,\t_N$ satisfy $\sum_{i=1}^N \ln^{1+1/D}(1+|\t_i|/N)<DN$. Then
 the kernel \eqref{eq_correlation_kernel} satisfies
\begin{multline} \label{eq_Kernel_bulk_asymptotic}
 K(x_1,n_1;x_2,n_2)\\=\frac{(1-n_1/N)^{1+n_2-n_1}}{2\pi \ii} \int_{ N^{-1}\bar \tau_1}^{N^{-1} \tau_1}
 \frac{(u-x_1/N)^{x_2-x_1-1}}{ (u-x_1/N+1-n_1/N)^{x_2-x_1+n_2-n_1+1}} du + o(1),
\end{multline}
where $o(1)$ is the \emph{uniformly small} remainder as $N\to\infty$, and the
integration contour crosses the real axis to the right from $x_1/N$ when $n_2\ge
n_1$, and on the interval $(\frac{x_1+n_1-1}{N}, \frac{x_1}{N})$ when $n_2<n_1$.
\end{theorem}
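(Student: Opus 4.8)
The strategy is a steepest descent analysis of the double contour integral in \eqref{eq_correlation_kernel}, using the exponential form $\exp(G_2(z)-G_1(w))$ together with the prefactors. The starting observation is that the functions $G_\kappa$ defined in \eqref{eq_action_logarithm} have derivatives whose zeros are exactly the solutions of \eqref{eq_critical_equation}, and by Lemma \ref{Lemma_number_of_roots} we are in the regime where each $G_\kappa$ has a conjugate pair of complex critical points $\tau_\kappa,\bar\tau_\kappa$. Since $|x_1-x_2|+|n_1-n_2|<D$ is bounded while the critical points live on scale $N$ (by the hypotheses $\Im(\tau_{1,2})>D^{-1}N$ and $|\tau_{1,2}|<DN$), the two actions $G_1$ and $G_2$ should be treated as asymptotically equal to leading order, and $\tau_1,\tau_2$ are macroscopically close; the difference between them enters only through the bounded discrete shifts $x_1-x_2$ and $n_1-n_2$ that survive in the final formula.

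\medskip

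\noindent\textbf{Main steps.} First I would rescale by setting $z=Nu$, $w=Nv$ and rewrite $G_\kappa(Nu)$ as $N$ times an integral against the empirical measure of the $\t_r$ plus the shift terms; the condition $\sum_i \ln^{1+1/D}(1+|\t_i|/N)<DN$ is precisely what is needed to control this rescaled action uniformly (it guarantees the relevant sums converge and that contributions from far-away $\t_r$ are negligible, giving the stated \emph{uniform} remainder). Second, I would deform the $z$-contour $\C(x_2,\dots,\t_1-1)$ and the $w$-contour $\C(\infty)$ to steepest descent/ascent contours passing through the critical points $\tau_\kappa$; because $G_1\approx G_2$ to leading order, a single pair of critical points essentially governs both contours, and the deformation must be done carefully so that the $z$-contour still encloses the required poles $x_2,\dots,\t_1-1$ and the $w$-contour stays large. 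Third, the leading exponential contributions from $z$ and $w$ cancel along the common critical point, so the double integral localizes near $\tau_1\approx\tau_2$; the residue structure from the factor $1/(w-z)$ and the explicit rational prefactors then collapses the double integral to a single integral. The resulting contour runs between the images $N^{-1}\bar\tau_1$ and $N^{-1}\tau_1$ of the critical points, and the integrand is exactly the rational function in \eqref{eq_Kernel_bulk_asymptotic}, with the Pochhammer/factorial prefactors in \eqref{eq_correlation_kernel} producing the power $(1-n_1/N)^{1+n_2-n_1}$ and the powers $(u-x_1/N)^{x_2-x_1-1}$ and $(u-x_1/N+1-n_1/N)^{-(x_2-x_1+n_2-n_1+1)}$. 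Finally, I would check that the first (combinatorial, non-integral) term $-\1_{n_2<n_1}\1_{x_2\le x_1}(\cdots)$ in \eqref{eq_correlation_kernel} is either subsumed into the contour crossing convention (this is exactly why the contour is prescribed to cross on $(\tfrac{x_1+n_1-1}{N},\tfrac{x_1}{N})$ when $n_2<n_1$ and to the right of $x_1/N$ otherwise) or cancels against a residue picked up during contour deformation.

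\medskip

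\noindent\textbf{The main obstacle.} The hard part will be making the steepest descent estimates \emph{uniform} over all admissible top rows $\t$ satisfying only the weak integrability bound $\sum_i \ln^{1+1/D}(1+|\t_i|/N)<DN$, rather than assuming $\t$ converges to a nice limiting profile. Standard steepest descent (as in \cite{Okounkov}, \cite{Petrov-curves}) presumes a well-behaved limiting action with a definite limit shape; here the action $\tfrac1N\big(G_2(Nu)-G_1(Nu)\big)$ need not converge, so I cannot pass to a fixed limiting function. Instead I would have to work with the finite-$N$ action directly, locating the actual critical points $\tau_\kappa$ for each $N$ and controlling the curvature $G_\kappa''(\tau_\kappa)$ and the decay of $\exp(\Re(G_2(z)-G_1(w)))$ along the deformed contours using only the stated bounds. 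Establishing that the non-critical parts of the contours contribute $o(1)$ uniformly — in particular controlling the tail contributions from $\t_r$ with $|\t_r|$ comparable to or larger than $N$, which is where the $\ln^{1+1/D}$ moment condition is doing real work — is the delicate technical core of the argument, and is presumably where the ``delicate technical details'' flagged before the theorem statement reside.
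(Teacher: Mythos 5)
Your plan follows the same overall route as the paper: rescale by $N$, deform the two contours of \eqref{eq_correlation_kernel} onto level lines of $\Im G_\kappa$ through the complex critical points, extract the $1/(w-z)$ residue as the main term (an integral from $\bar\tau_1/N$ to $\tau_1/N$), show the remaining double integral is negligible, and absorb the combinatorial indicator term as a residue at $u=x_1/N$ (which is exactly how the contour-crossing convention for $n_2<n_1$ arises). One correction to your mechanism: the double integral over the deformed contours does not ``localize and collapse'' into the single integral; it is $o(1)$ outright (its contribution near the critical points is $O(N^{-1/2})$ by a Taylor expansion, and elsewhere it decays by monotonicity of $\Re G_\kappa$ along the level lines), while the single integral is the residue collected during the deformation and is supported on a macroscopic arc, not localized at $\tau_1$.

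The genuine gap is at the step you yourself flag as the core difficulty, and your proposed way around it is not the one that works. You say that since the rescaled action need not converge you ``cannot pass to a fixed limiting function'' and would instead control curvature and decay at finite $N$ directly. The paper's key idea is the opposite: because the conclusion is a uniform $o(1)$ statement, it suffices to prove it along arbitrary subsequences, and the hypothesis $\sum_i\ln^{1+1/D}(1+|\t_i|/N)<DN$ makes the signed empirical measures $\mu_N$ of \eqref{eq_measures} tight, so one can always pass to a subsequence along which $\mu_N\to\mu$ weakly; then $G_{1,2}^{(N)}$ converge uniformly on compacts away from $\mathbb R$ to a fixed limit $G^{\infty}$, and the whole steepest-descent picture (critical points, level lines, signs of $\Re(G-G(\tau))$) stabilizes. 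Without this compactness reduction your finite-$N$ program has no handle on where the level lines go or on the nondegeneracy of $G_\kappa''(\tau_\kappa)$. A second omission: the level-line contours must be cut off and replaced by vertical segments at half-integer real parts within an $\eps N$-neighborhood of the real axis (where monotonicity of $\Re G_\kappa$ fails and the integrand has singularities), and one needs a separate estimate (Lemma \ref{Lemma_bound_derivative}) showing that $\Re G_\kappa$ cannot grow by more than $o(N)$ along these segments; your sketch does not address how the contours behave near $\mathbb R$, which is exactly where the poles at $x_2,\dots,\t_1-1$ and at the $\t_r$ live and where Lemma \ref{Lemma_contour_positioning} is needed to guarantee that no unwanted residues are picked up during the deformation.
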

\begin{remark}
 The change of variables $w=\frac{u-x_1/N}{u-x_1/N+1-n_1/N}$, i.e.\
 $u=\frac{1-n_1/N}{1-w}+x_1/N+n_1/N-1$ transforms the leading
 asymptotic of \eqref{eq_Kernel_bulk_asymptotic} into
\begin{equation}
\label{eq_Incomplete_beta} \frac{1}{2\pi \ii} \int_{ \bar \xi }^{\xi} w^{x_2-x_1-1}
(1-w)^{n_2-n_1}
  dw,
\end{equation}
where the integration contour intersects the real line inside $(0,1)$ when $n_2\ge
n_1$ and inside $(-\infty,0)$ otherwise;
$$
 \xi= \frac{\tau_1/N-x_1/N}{\tau_1/N-x_1/N+1-n_1/N}.
$$
The form \eqref{eq_Incomplete_beta} is known as the incomplete Beta-kernel.
\end{remark}

In the rest of this section we prove Theorem \ref{Theorem_Kernel_asympt}.

\smallskip

The level lines of functions $\Re G_\kappa(z)$ and $\Im G_\kappa(\tau)$ passing
through $\tau_\kappa$, $\kappa=1,2$, are important for what follows, they are
schematically sketched in Figure \ref{Fig_real_part}.

\begin{figure}[h]
\begin{center}
 {\scalebox{0.9}{\includegraphics{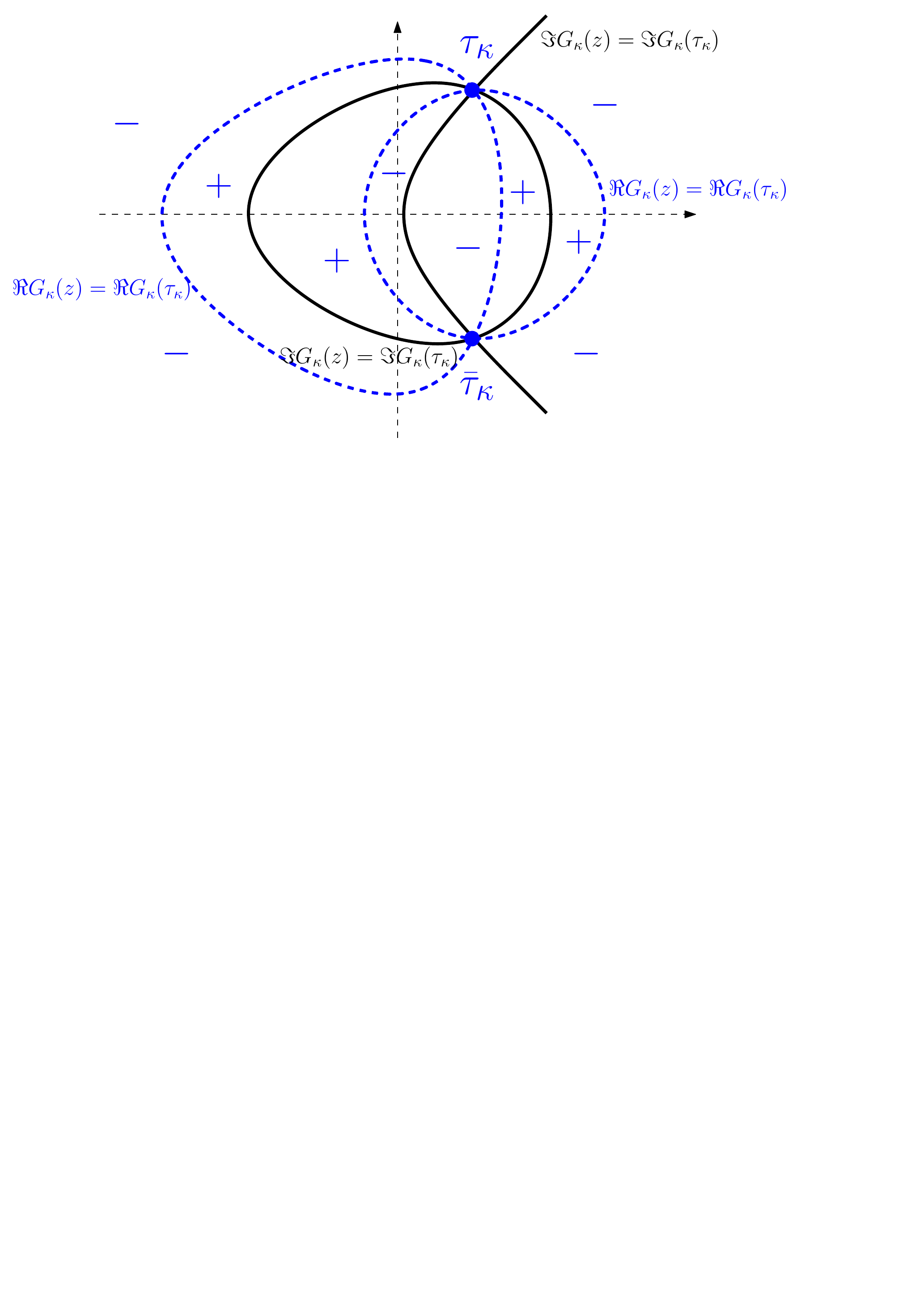}}}
\end{center}
 \caption{Contours $\Re G_\kappa(z)
 =\Re G_\kappa(\tau)$, $\Im G_\kappa(z)=\Im G_\kappa(\tau_\kappa)$ going through $\tau_\kappa$, $\bar \tau_\kappa$. Pluses and minuses indicate
 the sign of $\Re(G_\kappa(z)-G_\kappa(\tau))$ in each domain. \label{Fig_real_part}}
\end{figure}

Let us explain the key features of Figure \ref{Fig_real_part}. Since $\tau_\kappa$ and $\bar
\tau_\kappa$ are simple critical points of $G_\kappa(z)$ (i.e.\ $G''_\kappa(\tau_\kappa)\ne 0$,
$G''_\kappa(\tau_\kappa)\ne 0$) there are four branches of $\Re G_\kappa(z)=const$ and four
branches of $\Im G_\kappa(z)=const$ going out of each critical point and these two kinds of
branches interlace.

Since $\Re(G_\kappa(z)) \to -\infty$ as $z\to\infty$ the level lines of $\Re
G_\kappa(z)$ are closed curves. These level lines can not intersect in a non-real
point except at $\tau_\kappa$, $\bar \tau_\kappa$, as such an intersection would
have been another critical point for $G_\kappa(z)$. Further, since $\Re G_\kappa(z)$
is harmonic everywhere outside the singularities on the real axis, each of its
closed level lines must have one of these singularities inside. Finally,
$\Re(G_\kappa(z))=\Re(G_\kappa(\bar z))$ and therefore, the level lines are
symmetric. Combination of these properties implies that there are four
non-intersecting curves of constant $\Re G_\kappa(z)$ joining $\tau_\kappa$ with
$\bar \tau_\kappa$, as in Figure \ref{Fig_real_part}.

Let us now in addition draw all other level lines $\Re G_\kappa(z)=\Re G_\kappa(\tau_\kappa)$;
there might be no others in addition to the 4 we just drawn, but also there might be some
additional loops near the real axis.

We would like to distinguish four regions on the plane bounded by level lines of
$\Re G_\kappa(z)$; these are 4 regions adjacent to $\tau_\kappa$. (Note that, in
principle, these four regions do not have to cover the whole plane, but we know that
each of them contains some points of the real axis). One of these regions is
unbounded, while the other two are bounded and we call them left, middle and right
(according to the position of their intersections with the real axis) Due to the
maximum principle for the harmonic functions and the behavior at the infinity of
$\Re(G_\kappa(z))$ in the exterior (infinite) region we have $\Re G_\kappa(z)< \Re
G_\kappa(\tau_\kappa)$. Therefore, since $\tau_\kappa$ is a simple critical point
for $G_\kappa(z)$, also $\Re G_\kappa(z)< \Re G_\kappa(\tau_\kappa)$ in the middle
bounded region, and $\Re G_\kappa(z)> \Re G_\kappa(\tau_\kappa)$ in both left and
right bounded regions.

We will further distinguish four contours. $C^{(m)}_\kappa$ is the curve
$\Im(G_\kappa(z))=\Im(\tau_\kappa)$ which starts at $\tau_\kappa$ and continues in
the middle bounded region until it reaches the real axis. After that we continue the
curve symmetrically to $\bar \tau_\kappa$. Note that $\Re (G_\kappa(z))$ is
monotonous along $C^{(m)}_\kappa$ except, perhaps, at the real axis, since its local
extremum would necessary be a critical point. Therefore, $\Re (G_\kappa(z))$
monotonously decreases as we move away from $\tau_\kappa$ in the upper half--plane
and increases as we move towards $\bar \tau_\kappa$ in the lower halfplane. Also
observe that the choice of the branch of the logarithm in
\eqref{eq_action_logarithm} is not important here as the curve $C^{(m)}_\kappa$ is
constructed locally and therefore does not depend on this choice.

Further, $C^{(r)}_\kappa$ is the curve $\Im(G_\kappa(z))=\Im(\tau_\kappa)$ which starts at
$\tau_\kappa$ and continues in the right bounded region until it reaches the real axis. After that
we continue the curve symmetrically to $\bar \tau_\kappa$. Along this curve $\Re (G_\kappa(z))$
monotonously increases as we move away from $\tau_\kappa$ in the upper half--plane and decreases as
we move towards $\bar \tau_\kappa$ in the lower halfplane.

The contour $C^{(l)}_\kappa$ is defined in the same way, but inside the left bounded region.

The contour $C^{(i)}_\kappa$ is the curve $\Im(G_\kappa(z))=\Im(\tau_\kappa)$ which starts at
$\tau_\kappa$ and continues in the infinite region. This curve will never go back to the real axis
(this is established below by counting the points on the real axis where
$\Im(G_\kappa(z))=\Im(\tau_\kappa)$) and thus goes to infinity in the upper half--plane. We then
return symmetrically from the infinity to $\bar\tau_\kappa$ in the lower half--plane.

\begin{lemma} \label{Lemma_contour_positioning}
 The curve $C^{(m)}_\kappa$ intersects the real axis inside the interval
 $(x_\kappa+n_\kappa-N,x_\kappa-1)$, and the curve $C^{(i)}_\kappa$ does not intersect the real axis.
 The curve $C^{(l)}_\kappa$ intersects the real axis to the left from $x_\kappa+n_\kappa-N$, and
 the curve $C^{(r)}_\kappa$ intersects the real axis to the right from $x_\kappa-1$.
\end{lemma}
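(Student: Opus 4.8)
The plan is to determine the boundary values of $\Im G_\kappa$ on the real axis from the branch cuts of the logarithms in \eqref{eq_action_logarithm}, and then to locate the four contours by combining this with the sign of $\Re G_\kappa-\Re G_\kappa(\tau_\kappa)$ near the poles of $G_\kappa'$. Write $c_0:=\Im G_\kappa(\tau_\kappa)$ and abbreviate $I=[x_\kappa+n_\kappa-N,\,x_\kappa-1]$.

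First I would compute the boundary values. For $z=x+\ii 0^+$ with $x$ not an integer pole, each term $\pm\ln(z-d)$ contributes $\pm\ii\pi$ to $G_\kappa$ precisely when $x<d$, so $\Im G_\kappa(x+\ii 0^+)=\pi\,m(x)$, where $m(x)$ is the number of poles of the first sum in \eqref{eq_action_logarithm} exceeding $x$ minus the number of $\t_r$ exceeding $x$. After the cancellation performed in the proof of Lemma \ref{Lemma_number_of_roots}, the real poles of $G_\kappa'$ are simple and of two kinds: the \emph{positive} poles, which come from $\Tc\cap I$ and across which $m$ increases by $1$ as $x$ decreases, and the \emph{negative} poles $\t_r\in\T\setminus I$, across which $m$ decreases by $1$ as $x$ decreases. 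The decisive structural point is that $I$ contains only positive poles and its complement only negative ones; hence, as $x$ runs from $-\infty$ to $+\infty$, the step function $m(x)$ is monotone on each of $(-\infty,x_\kappa+n_\kappa-N)$, the interval $I$, and $(x_\kappa-1,+\infty)$, rising, then falling, then rising, from $-n_\kappa$ back to $0$.

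Next I would show that every real crossing of a contour sits at a pole. Along each of the four contours $\Im G_\kappa\equiv c_0$; for a generic configuration $c_0\notin\pi\mathbb Z$ (the exceptional case is handled by continuity), so that $\pi\,m(x)$ never equals $c_0$ on the open gaps between poles, and each real intersection must occur at a pole across which the two one-sided limits $\pi m(x^-)$ and $\pi m(x^+)$ straddle $c_0$. Since $m$ is monotone with unit steps on each of the three intervals above, each interval supplies at most one such pole, so $\{\Im G_\kappa=c_0\}$ meets the real axis in at most three points. The sign of $\Re G_\kappa$ then identifies which pole each contour uses: $\Re G_\kappa\to-\infty$ at a positive pole and $\Re G_\kappa\to+\infty$ at a negative pole. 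Because $\Re G_\kappa<\Re G_\kappa(\tau_\kappa)$ throughout the middle region, the contour $C^{(m)}_\kappa$ can reach the real axis only at a positive pole, hence inside $I$; because $\Re G_\kappa>\Re G_\kappa(\tau_\kappa)$ in the left and right regions, the contours $C^{(l)}_\kappa$ and $C^{(r)}_\kappa$ can reach the real axis only at negative poles, which by the left/right placement of those regions lie to the left of $x_\kappa+n_\kappa-N$ and to the right of $x_\kappa-1$ respectively. These are three distinct crossings, one in each monotonicity interval, so they account for all the crossings permitted above.

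Finally, $C^{(i)}_\kappa$ is the fourth branch of $\{\Im G_\kappa=c_0\}$ issuing from $\tau_\kappa$. A branch cannot close up onto $\tau_\kappa$ or onto another branch while staying in the open upper half-plane: such a loop would bound a region on whose boundary the harmonic function $\Im G_\kappa$ equals the constant $c_0$, forcing $\Im G_\kappa\equiv c_0$ and a contradiction. Thus every branch either meets the real axis or escapes to infinity, and since the three available real crossings are already consumed by $C^{(m)}_\kappa,C^{(l)}_\kappa,C^{(r)}_\kappa$, the branch $C^{(i)}_\kappa$ must run off to infinity. I expect the main obstacle to be the structural/counting step: verifying that the positive and negative poles segregate exactly across the endpoints of $I$, so that $m(x)$ has the clean rise--fall--rise profile and the number of real crossings is capped at three. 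Once this is secured, the assignment of the three crossings to the three bounded regions via the sign of $\Re G_\kappa$, and the escape of $C^{(i)}_\kappa$, follow quickly.
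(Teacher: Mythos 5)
Your argument is essentially the paper's own proof: both trace the boundary values of $\Im G_\kappa$ along the real axis (the paper does this along a contour $\gamma$ skirting the singularities by small upper half-circles), obtain the same rise--fall--rise staircase profile from the segregation of the two kinds of poles across the endpoints of $[x_\kappa+n_\kappa-N,x_\kappa-1]$, cap the number of real crossings of the level set $\{\Im G_\kappa=c_0\}$ at three (one per monotone piece), and then assign the three crossings to $C^{(m)}_\kappa$, $C^{(l)}_\kappa$, $C^{(r)}_\kappa$ via the sign of $\Re G_\kappa-\Re G_\kappa(\tau_\kappa)$, leaving $C^{(i)}_\kappa$ to escape to infinity. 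The one soft spot is dismissing the exceptional case $c_0\in\pi\mathbb Z$ ``by continuity'' (the discrete data cannot be perturbed); the paper instead observes that a crossing interior to a constancy segment must be a critical point of $G_\kappa$ and invokes the critical-point count from the proof of Lemma \ref{Lemma_number_of_roots}, which slots directly into your framework since at most one segment per monotone piece can sit at level $c_0$.
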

\begin{proof}
 Choose a small positive parameter $\eps>0$ and let $\gamma$ be the curve which is
 the real axis, except near the singularities of $G(z)$ (i.e.\ the points $d$
 and $d'$ in \eqref{eq_critical_rewritten}). $\gamma$ walks around each singularity
 by a half--circle in the upper-halfplane centered at this singularity and of the
 radius $\eps$.

 We remarked above that the curves $C^{(m)}_\kappa$, $C^{(r)}_\kappa$, $C^{(l)}_\kappa$, and
 $C^{(i)}_\kappa$ do not depend on the choice of branches of the logarithm in
 \eqref{eq_action_logarithm}. But we need to choose \emph{some} branch and so we use
 the branch of $\ln(y)$ with a cut along the negative \emph{imaginary} axis in terms of $y$
 and such that the value at positive real $y$ is real. With this notation in mind,
 let us trace the values of $\Im(G_\kappa(z))$ along the curve $\gamma$ moving in the direction of growing $x$. Clearly, if we choose $\eps$ small enough, then $\Im(G_\kappa(z))$
 is constant on straight segments between the singularities, increases along the
 half--circles corresponding to singularities $d$ in \eqref{eq_critical_rewritten}
 and decreases along the half--circles corresponding to singularities $d'$ in
 \eqref{eq_critical_rewritten}. At $+\infty$ the value of this function is $0$. This is schematically illustrated in Figure \ref{Fig_imaginary_part}.

\begin{figure}[h]
\begin{center}
 {\scalebox{0.9}{\includegraphics{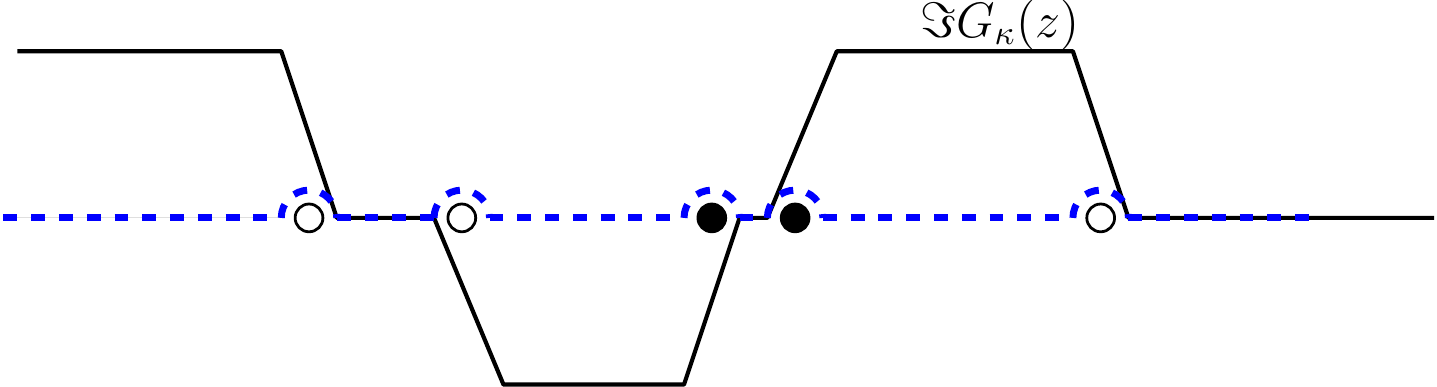}}}
\end{center}
 \caption{Imaginary part (in solid black) of $G_\kappa(z)$ along the (blue dashed) curve which follows the real line avoiding the
 singularities. White points correspond to $d$ terms in the first sum of
 \eqref{eq_critical_rewritten} and black points correspond to $d'$ terms in the
 second sum.
 \label{Fig_imaginary_part}}
\end{figure}

At a point $z$ where one of our curves intersects the real axis we should have $\Im
(G_\kappa(z))=\Im (G_\kappa(\tau_\kappa))$. We claim that there can not be two intersections on the
same horizontal segment of the graph of $\Im (G_\kappa(z))$ along $\tau$. Indeed, each such
intersection is a critical point of $G_\kappa(z)$ (since the real axis itself is also a level line
$\Im (G_\kappa(z))=const$), but the analysis of critical points in the proof of Lemma
\ref{Lemma_number_of_roots} shows that there are never more than one critical points between two
real singularities of $G_\kappa$. Also these curves can not intersect the real axis on the segments
$(d^l_L, d'_1)$ and $(d'_P, d_1^R)$
--- because we know that there are no critical points of $G_1$ on these segments.

Therefore, our curves can not have more than $3$ intersections with real axis, and, thus, they have
exactly three, in particular, $G^{(i)}_\kappa$ does not intersect the real axis. The middle
intersection then belongs to $G^{(m)}_\kappa$ and should be in the interval $[d'_1, d'_P]\subset
(x_\kappa+n_\kappa-N,x_\kappa-1)$. The right intersection belongs to $G^{(r)}_\kappa$ and should be
in the interval $[d^r_1,+\infty)\subset (x_\kappa-1,+\infty)$. Finally, the left intersection
belongs to $G^{(l)}_\kappa$ and should be in the interval $(-\infty, d^l_L]\subset (-\infty,
x_\kappa+n_\kappa-N)$.
\end{proof}

Next, we need analogues of Lemmas \ref{Lemma_number_of_roots},
\ref{Lemma_contour_positioning}, in which the point configuration $\t$ is replaced
by a probability measure. Take a probability density $\nu(x)$, $x\in\mathbb R$ such
that $\nu(x)\le 1$ for all $x$, and two numbers, $\mathbf x\in\mathbb R$,
$0<\boldsymbol{\eta}<1$. Given this data, define
\begin{equation}
\label{eq_action_continuous}
 G(z)=\int_{\mathbf x+\boldsymbol{\eta} -1}^{\mathbf x} \ln(z-t) dt -\int_{\mathbb R} \ln(z-y) \nu(y) dy.
\end{equation}
\begin{lemma} \label{Lemma_number_of_roots_cont}
 The equation $G'(z)=0$ has either $0$ or $2$ non-real roots. In the latter case the non-real roots
 are complex conjugate to each other.
\end{lemma}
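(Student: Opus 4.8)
The plan is to deduce the continuous statement from its discrete counterpart, Lemma~\ref{Lemma_number_of_roots}, via an approximation argument combined with Hurwitz's theorem. First I record two elementary reductions. Differentiating \eqref{eq_action_continuous} gives
$$
G'(z)=\int_{\mathbf x+\boldsymbol{\eta}-1}^{\mathbf x}\frac{dt}{z-t}-\int_{\mathbb R}\frac{\nu(y)}{z-y}\,dy,
$$
which is a genuine holomorphic function of $z$ on the open upper half-plane $\mathbb H=\{\Im z>0\}$: each kernel $1/(z-s)$ with $s$ real is holomorphic off the real axis, and the $y$-integral converges since $|z-y|\ge\Im z$. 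Moreover $\overline{G'(z)}=G'(\bar z)$ because the integration runs over real points only; hence non-real roots occur in complex-conjugate pairs, and it suffices to prove that $G'$ has at most one zero in $\mathbb H$.

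Next I would realize $G'$ as a limit of the rational functions in \eqref{eq_critical_equation}. Put $I=[\mathbf x+\boldsymbol{\eta}-1,\mathbf x]$, an interval of length $1-\boldsymbol{\eta}$. Choose integers $n(N),x(N)$ with $n(N)/N\to\boldsymbol{\eta}$, $x(N)/N\to\mathbf x$, so that the rescaled interval $\tfrac1N[x(N)+n(N)-N,\,x(N)-1]$ converges to $I$, and choose $N$-tuples $\t^{(N)}=(\t_1>\dots>\t_N)$ of distinct integers whose empirical measures $\tfrac1N\sum_r\delta_{\t_r/N}$ converge weakly to $\nu$. Writing $G_N$ for the discrete action \eqref{eq_action_logarithm} and $\widetilde G_N(w):=\tfrac1N G_N(Nw)$, one has
$$
\widetilde G_N'(w)=\frac1N\sum_{a=1}^{N-n(N)}\frac{1}{w-x(N)/N+a/N}-\frac1N\sum_{r=1}^N\frac{1}{w-\t_r/N}.
$$
The first term is a Riemann sum converging to $\int_I dt/(w-t)$ and the second converges to $\int_{\mathbb R}\nu(y)\,dy/(w-y)$, both uniformly for $w$ in compact subsets of $\mathbb H$ (there $y\mapsto 1/(w-y)$ is bounded, continuous and vanishes at infinity, so weak convergence applies, locally uniformly in $w$). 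Thus $\widetilde G_N'\to G'$ locally uniformly on $\mathbb H$, with every $\widetilde G_N'$ holomorphic there.

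Now I invoke Hurwitz's theorem. Since $\int_I 1=1-\boldsymbol{\eta}$ while $\int_{\mathbb R}\nu=1$, we have $G'(z)\sim-\boldsymbol{\eta}/z$ as $z\to\infty$, and $\boldsymbol{\eta}>0$ forces $G'\not\equiv0$. If $G'$ had two zeros (with multiplicity) in $\mathbb H$, they would lie in a bounded open $U$ with $\overline U\subset\mathbb H$ and $G'\ne0$ on $\partial U$; Hurwitz then forces $\widetilde G_N'$ to have at least two zeros in $U\subset\mathbb H$ for all large $N$. But by Lemma~\ref{Lemma_number_of_roots} the equation $\widetilde G_N'=0$ has at most two non-real roots, which by conjugate symmetry is at most one in $\mathbb H$ counted with multiplicity, a contradiction. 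Hence $G'$ has at most one zero in $\mathbb H$, and with the conjugate symmetry this yields the dichotomy.

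The main obstacle is the construction of the tuples $\t^{(N)}$: one must produce \emph{distinct} integers whose rescaled empirical measures converge to $\nu$, and this is exactly where the hypothesis $\nu\le1$ enters. Indeed, $\nu\le1$ means the distribution function $F(y)=\int_{-\infty}^y\nu$ satisfies $F'\le1$, so its quantile function $Q=F^{-1}$ has $Q'\ge1$; sampling $\t^{(N)}_i$ near $N\,Q\big(i/(N+1)\big)$ then produces points separated by at least $\approx1$, which survive rounding to distinct integers. This is the continuous shadow of the fact that in \eqref{eq_critical_rewritten} the $\t_r$ are distinct. The remaining points—uniform integrability in the tails for the convergence of the Cauchy transforms, and the Hurwitz bookkeeping with multiplicities—are routine. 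I would also note that a direct proof via the argument principle is conceivable, computing the winding of $G'$ along $\partial\mathbb H$ (the large semicircle contributes $-\pi$ since $G'\sim-\boldsymbol{\eta}/z$), but controlling the real-axis contribution for an arbitrary density $\nu$ is considerably less transparent than the approximation route.
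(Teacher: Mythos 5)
Your proof is correct and follows essentially the same route as the paper, which approximates $G$ uniformly on compact subsets of $\mathbb C\setminus\mathbb R$ by the discrete functions $\frac1N G_\kappa(z)$ and then combines Lemma \ref{Lemma_number_of_roots} with Rouch\'e's theorem (your Hurwitz argument is the same device). You merely supply details the paper leaves implicit, notably the construction of distinct integer configurations from the hypothesis $\nu\le 1$ and the nonvanishing $G'(z)\sim-\boldsymbol{\eta}/z$ at infinity.
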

\begin{proof}
 The function $G(z)$ outside the real axis can be (uniformly on the compact subsets of $\mathbb C\setminus \mathbb R$) approximated by functions of the form $\frac{1}{N}
 G_\kappa(z)$ of \eqref{eq_action_logarithm}. It remains to combine Lemma
 \ref{Lemma_number_of_roots} with Rouche's theorem.
\end{proof}
Suppose that $G'(z)=0$ has two complex roots, and let $\tau$ denote the root in the upper
half-plane. Define four curves $C^{(m)}$, $C^{(l)}$, $C^{(r)}$, $C^{(i)}$ in the same way as
$C^{(m)}_\kappa$, $C^{(l)}_\kappa$, $C^{(r)}_\kappa$, $C^{(i)}_\kappa$, i.e.\ they are parts of the
lines $\Im(G_\kappa(z))=\Im(\tau_\kappa)$ in the upper half--plane continued symmetrically to the
lower half--plane.
\begin{lemma} \label{Lemma_contour_positioning_cont}
 The curve $C^{(m)}$ intersects the real axis inside the interval
 $(\mathbf x+ \boldsymbol{\eta}_\kappa-1,\mathbf x)$, and the curve $C^{(i)}$ does not intersect the real axis.
 The curve $C^{(l)}$ intersects the real axis to the left from $\mathbf x+ \boldsymbol{\eta}-1$, and
 the curve $C^{(r)}$ intersects the real axis to the right from $\mathbf x$.
\end{lemma}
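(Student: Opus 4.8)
The plan is to mirror the proof of Lemma~\ref{Lemma_contour_positioning}, replacing the step function ``$\Im G_\kappa$ along the detoured real axis'' by its continuous counterpart. Write $a=\mathbf x+\boldsymbol{\eta}-1$ and $b=\mathbf x$, and let $F(x)=\int_{-\infty}^x\nu(y)\,dy$. The central object is the boundary value of the imaginary part of $G$ from \eqref{eq_action_continuous} just above the real axis,
\[
 H(x):=\tfrac1\pi\,\Im\,G(x+\ii 0^+).
\]
First I would compute $H$ explicitly. Since $\Im\ln(x-t+\ii 0^+)=\pi\,\1_{t>x}$, one gets $\tfrac1\pi\,\Im\int_a^b\ln(x-t+\ii 0^+)\,dt=\bigl|[a,b]\cap(x,\infty)\bigr|$ and $\tfrac1\pi\,\Im\int_{\mathbb R}\ln(x-y+\ii 0^+)\nu(y)\,dy=1-F(x)$, so that
\[
 H(x)=\begin{cases}F(x)-(1-\boldsymbol{\eta}), & x\le a,\\ (b-x)+F(x)-1, & a\le x\le b,\\ F(x)-1, & x\ge b.\end{cases}
\]
One checks $H$ is continuous at $a$ and $b$, with $H(-\infty)=\boldsymbol{\eta}-1$ and $H(+\infty)=0$.

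Second comes the key structural step. Differentiating gives $H'(x)=\nu(x)$ on $(-\infty,a)$ and on $(b,\infty)$, but $H'(x)=\nu(x)-1\le0$ on $(a,b)$ by the hypothesis $\nu\le1$. Hence $H$ increases up to a local maximum $H(a)$, decreases across $[a,b]$ to a local minimum $H(b)$, and then increases again. This is the exact analogue of the discrete profile in Figure~\ref{Fig_imaginary_part}: the ``white'' singularities $d$ (which made $\Im G_\kappa$ increase) are smeared into the regions governed by $\nu$, while the ``black'' singularities $d'$ (which made it decrease) are replaced by the interval $(a,b)$; the assumption $\nu\le1$ is precisely what guarantees the monotone decrease there, i.e.\ the existence of a genuine ``middle'' well.

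Third, I would locate the crossings exactly as in Lemma~\ref{Lemma_contour_positioning}. Any point where one of $C^{(m)},C^{(l)},C^{(r)},C^{(i)}$ meets the real axis satisfies $\Im G(z)=\Im G(\tau)$, i.e.\ $H(x)=v$ with $v:=\tfrac1\pi\Im G(\tau)$. By the three–piece monotonicity just established, the equation $H(x)=v$ has exactly three solutions, one in each of $(-\infty,a),(a,b),(b,\infty)$, as soon as $H(b)<v<H(a)$; ordered from left to right these crossings must belong to $C^{(l)},C^{(m)},C^{(r)}$ respectively (by the same left/middle/right bookkeeping for the bounded regions of $\Re G$ used before the lemma, cf.\ Figure~\ref{Fig_real_part}), while $C^{(i)}$ has no crossing. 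Substituting $a=\mathbf x+\boldsymbol{\eta}-1$, $b=\mathbf x$ yields precisely the claimed intervals.

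The main obstacle is the inequality $H(b)<v<H(a)$, i.e.\ that the level of the complex saddle lands strictly between the local minimum and maximum of the boundary profile. I would deduce it from the existence (Lemma~\ref{Lemma_number_of_roots_cont}) and simplicity of the critical point $\tau$: the four level branches of $\Im G$ emanating from the saddle, together with the maximum–principle description of the bounded ``middle'' region of $\Re G$, force the branch $C^{(m)}$ into that region and down to the axis at an interior point of $(a,b)$, which pins $v$ strictly inside $(H(b),H(a))$. A secondary technicality is that $H$ need not be \emph{strictly} monotone on $(a,b)$ where $\nu\equiv1$, so the level set there could be a subinterval; but it still lies inside the open interval $(a,b)$, so the statement is unaffected. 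Alternatively, the lemma can be obtained by approximation, exactly as Lemma~\ref{Lemma_number_of_roots_cont} was: choosing distinct integers $\t^{(N)}$ whose rescaled empirical density converges to $\nu$ and with $x_\kappa^{(N)}/N\to\mathbf x$, $n_\kappa^{(N)}/N\to\boldsymbol{\eta}$ makes $\tfrac1N G_\kappa(Nz)\to G(z)$ with converging critical points and level lines, so that Lemma~\ref{Lemma_contour_positioning}, rescaled by $N$, passes to the limit and gives the same intervals.
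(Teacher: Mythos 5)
Your proposal is correct and follows essentially the same route as the paper, whose proof of this lemma is literally ``trace the imaginary part of $G(z)$ along the curve $\Im(z)=\eps>0$ and argue as in Lemma \ref{Lemma_contour_positioning}''; you simply carry out that computation explicitly, and your fallback via discrete approximation matches how Lemma \ref{Lemma_number_of_roots_cont} is proved. (Minor harmless slip: the constant in your formula for $H(x)$ on $x\le a$ should be $F(x)-\boldsymbol{\eta}$ rather than $F(x)-(1-\boldsymbol{\eta})$, as one sees by matching the middle piece at $x=a$; this does not affect the monotonicity argument.)
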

\begin{proof}
 The proof follows the same lines as that of Lemma \ref{Lemma_contour_positioning}. In more details,
  we trace
 the imaginary part of $G(z)$ along the curve $\Im(z)=\eps>0$.
\end{proof}
The last technical ingredient needed for the proof of Theorem
\ref{Theorem_Kernel_asympt} is the following statement.

\begin{lemma} \label{Lemma_bound_derivative}
 For $N=1,2,\dots$, set
 $$R_N(x)=\sum_{k=-N}^N \left|\frac{1}{\ii x  +1/2 +k}\right|=\sum_{k=-N}^N \frac{1}{\sqrt{x^2
 +(k+1/2)^2}}.
 $$
 Then
 $$
 \lim_{\eps\to 0} \limsup_{N\to\infty} \frac{1}{N}\int_0^{N\eps} R_N(x) dx=0.
 $$
\end{lemma}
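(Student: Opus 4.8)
The plan is to dominate the sum $R_N(x)$ by an elementary integral, rescale the variable $x$ by $N$, and then observe that the limiting density has only an integrable (logarithmic) singularity at the origin.

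First I would exploit the monotonicity of $t\mapsto(x^2+t^2)^{-1/2}$ on each half-line. Write $f(t)=(x^2+t^2)^{-1/2}$, which is even and decreasing for $t>0$. For $k\ge 1$ the interval $[k-\tfrac12,k+\tfrac12]$ lies in $(0,\infty)$, so $f(t)\ge f(k+\tfrac12)$ there and hence $f(k+\tfrac12)\le\int_{k-1/2}^{k+1/2}f(t)\,dt$. Summing this telescoping comparison over both the nonnegative and negative values of $k$ (using evenness of $f$ to handle $k\le-1$) yields
$$
R_N(x)\le 2f(\tfrac12)+2\int_0^{N+1/2}f(t)\,dt=\frac{2}{\sqrt{x^2+1/4}}+2\,\arcsh\!\left(\frac{N+1/2}{x}\right),
$$
where I use $\int_0^M(x^2+t^2)^{-1/2}\,dt=\arcsh(M/x)$ for $x>0$. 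This reduces the whole question to controlling the $x$-integrals of the two explicit terms on the right.

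Next I would treat the two terms separately after dividing by $N$ and integrating over $[0,N\eps]$. The first term gives $\frac{2}{N}\arcsh(2N\eps)=O(N^{-1}\log N)$, which tends to $0$ as $N\to\infty$ for each fixed $\eps$ and is therefore harmless. For the second term, the substitution $x=(N+\tfrac12)u$ transforms $\frac{2}{N}\int_0^{N\eps}\arcsh\!\left(\frac{N+1/2}{x}\right)dx$ into $\frac{2(N+1/2)}{N}\int_0^{N\eps/(N+1/2)}\arcsh(1/u)\,du$, which is at most $\frac{2(N+1/2)}{N}\int_0^{\eps}\arcsh(1/u)\,du$. Since $\arcsh(1/u)=\log\!\bigl(1/u+\sqrt{1+1/u^2}\,\bigr)\sim\log(2/u)$ as $u\to0^+$, the integral $\int_0^\eps\arcsh(1/u)\,du$ is finite and tends to $0$ as $\eps\to0$. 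Combining the two estimates gives $\limsup_{N\to\infty}\frac{1}{N}\int_0^{N\eps}R_N(x)\,dx\le 2\int_0^\eps\arcsh(1/u)\,du$, and letting $\eps\to0$ proves the claim. There is no genuine obstacle here: the only points demanding a little care are arranging the sum-to-integral comparison so that the error term is uniform in $x$, and checking that the rescaled integrand $\arcsh(1/u)$ has merely a logarithmic, hence integrable, singularity at $u=0$.
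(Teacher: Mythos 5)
Your proof is correct, and it takes a recognizably different route from the paper's, even though both arguments ultimately hinge on the integrability of the $\arcsh$ (i.e.\ logarithmic) singularity. The paper integrates $R_N$ termwise in $x$ \emph{exactly}, obtaining $\sum_{k=-N}^N \arcsh\bigl(\eps N/|k+1/2|\bigr)$, and then estimates this sum over $k$ by splitting at $|k+1/2|=\sqrt{\eps}N$, bounding the outer part by $(2N+1)\arcsh(\sqrt{\eps})$ and the inner part by expanding $\arcsh$ into logarithms and invoking Stirling's formula for $\ln\bigl(\lfloor\sqrt{\eps}N-1\rfloor!\bigr)$. You swap the order of operations: you first dominate the sum over $k$ by a $t$-integral via monotonicity of $t\mapsto(x^2+t^2)^{-1/2}$, pointwise in $x$, and only then integrate in $x$ and rescale, which collapses everything to the single explicit quantity $2\int_0^\eps\arcsh(1/u)\,du$ plus an $O(N^{-1}\log N)$ error. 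Your version avoids the sum-splitting and Stirling entirely and makes the dependence on $\eps$ transparent (the $\limsup$ is bounded by an explicit function of $\eps$ alone, visibly tending to $0$); the paper's version has the minor virtue of an exact intermediate identity for $\int_0^{N\eps}R_N(x)\,dx$ before any estimation. Both are complete and elementary; the steps you flag as needing care (uniformity of the sum-to-integral comparison in $x$, integrability of $\arcsh(1/u)$ at $u=0$) are indeed the only delicate points and you handle them correctly.
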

\begin{proof}
 Integrating $R_N(x)$ termwise we get
 \begin{equation}
 \label{eq_integrated_derivative_sum}
  \int_0^{N\eps} R_N(x)dx =\sum_{k=-N}^N \arcsh\left(\frac{\eps N}{|k+1/2|}\right),
 \end{equation}
 where $\arcsh(x)$ is the inverse functions to $\sinh(x)=\frac{e^x-e^{-x}}{2}$, i.e.\
 $$
  \arcsh(x)=\ln\left(x+\sqrt{x^2+1}\right),\quad x\ge 0.
 $$
 Split the sum in \eqref{eq_integrated_derivative_sum} into two: the first one has $k$ such that
 $|k+1/2|>\sqrt{\eps} N$ and the second one is the rest.
The first sum is bounded from above by
\begin{equation}
\label{eq_first_sum_bound}
 (2N+1)\arcsh(\sqrt{\eps}).
\end{equation}
For the second sum we use the inequality
$$
 \arcsh(x)= \ln(x)+\ln\left(1+\sqrt{1+1/x^2}\right)\le
 \ln(x)+\ln\left(\frac{3}{\sqrt{\eps}}\right),\quad x>\sqrt{\eps},\quad 0<\eps<1,
$$
which gives the bound
\begin{multline}\label{eq_second_sum_bound}
 \sum_{|k+1/2|\le \sqrt{\eps} N} \arcsh\left(\frac{\eps N}{|k+1/2|}\right)\le \sum_{|k+1/2|<\sqrt{\eps} N}\left( \ln(\eps)+\ln(N)-\ln|k+1/2| +
 \ln\left(\frac{3}{\sqrt{\eps}}\right)\right)\\ \le (2\sqrt{\eps} N+1)( \ln(N) + \ln(3/\sqrt{\eps}) )
 -2\ln\bigl( \lfloor \sqrt{\eps} N -1 \rfloor ! \bigr)
\end{multline}
The Stirling's formula yields $\ln(x!)>x(\ln(x)-1)$ for large $x$, and therefore, we further bound
\eqref{eq_second_sum_bound} by
\begin{multline} \label{eq_second_sum_final_bound}
(2\sqrt{\eps} N+1)( \ln(N) + \ln(3/\sqrt{\eps}) )
 -2 (\sqrt{\eps} N -2)\bigl(\ln(\sqrt{\eps} N-2)-1\bigr)\\ \le C( \ln(N) + \sqrt{\eps} N +
 \sqrt{\eps}\ln(1/\eps) N + 1),
\end{multline}
for a constant $C>0$. Summing \eqref{eq_first_sum_bound} and \eqref{eq_second_sum_bound}, dividing
by $N$, sending $N\to\infty$ and then $\eps\to 0$ we get the desired claim.
\end{proof}

\smallskip

\begin{proof}[Proof of Theorem \ref{Theorem_Kernel_asympt}] We denote through $\mu_N$ the signed measure on $\mathbb R$ describing the
scaled by $N$ points $d$ and $d'$ from \eqref{eq_critical_rewritten}:
\begin{equation}
\label{eq_measures}
 \mu_N= \frac{1}{N} \sum_{d\in \T \setminus [x_1-N+ n_1, x_1-1] } \delta_{d/N} - \frac{1}{N} \sum_{d'\in \Tc \bigcap [x_1-N+ n_1, x_1-1]}
 \delta_{d'/N}.
\end{equation}
The assumption $\sum_{i=1}^N \ln^{1+1/D}(1+|\t_i|/N)<DN$ implies that the measures $\mu_N$ are
tight, and therefore by the standard compactness arguments we can (and will) assume that the
measures $\mu_N$ weakly converge as $N\to\infty$ to a signed measure $\mu$. Note that $\mu_N$--mass
of any interval $[a,b]$ is satisfies the bounds
$$
a-b-\frac{1}{N}\le \mu_N([a,b])\le b-a+\frac{1}{N},
$$
therefore $\mu$ is an absolutely continuous signed measure with density $\mu(x)$ bounded between
$-1$ and $1$.

Recall the functions $G_i(z)$ of \eqref{eq_action_logarithm},  and let us introduce
their normalized versions through:
\begin{multline*}
 G_1^{(N)}(u)=\frac{1}{N} \sum_{d\in \T \setminus [x_1-N+ n_1, x_1-1] } \ln(u-d/N) -
 \frac{1}{N} \sum_{d'\in \Tc \bigcap [x_1-N+ n_1, x_1-1]}
 \ln(u-d'/N)\\=  \int_{\mathbb R} \ln(z-x) \mu_N(dx),
\end{multline*}
and similarly for $G_2^{(N)}(u)$. Also set
$$
 G^{\infty}(z)= \int_{\mathbb R} \ln(z-x) \mu(x) dx,
$$
which is well-defined due to the assumption $\sum_{i=1}^N \ln^{1+1/D}(1+|\t_i|/N)<DN$, and note
that the same assumption implies
$$
 G_1^{(N)}(z)\rightrightarrows G^{\infty}(z)
$$
uniformly over $z$ in compact subsets of $\mathbb C\setminus\{z\mid \Im(z)=0\}$. Since the
differences $|n_2-n_1|$ and $|x_1-x_2|$ stay finite as $N\to\infty$, we also have
$$
 G_2^{(N)}(z)\rightrightarrows G^{\infty}(z).
$$

We now introduce two integration contours. For the $\C_z$ contour we start from the
union of $C^{(m)}_1$ and $C^{(i)}_1$ oriented from top to bottom. From the technical
point of view it is convenient to modify this contour near the real
axis\footnote{The subsequent proofs will probably go through even without this
modification, but rigorous justifications of some steps would become more involved,
due to the singularities of the integrand near the real axis.} Namely, we choose
$\eps>0$ to be fixed later and assume that $\Re \tau_1
>\eps N$. As soon as $\C_z$ contour (recall that be are tracing it from the bottom) reached the
level $\Im(z)=-\eps N$, it immediately has a horizontal segment so that to turn
$\Re(z)$ into a half--integer (i.e.\ number of the form $1/2+n$, $n\in\mathbb Z$),
and then vertical segment from $\Im(z)=-\eps N$ to $\Im(z)=0$. In the upper
halfplane $\Im(z)>0$ we do the same modification. Similarly, and $\C_w$ is defined
as the union of $C^{(r)}_2$ and $C^{(l)}_2$ oriented counter--clockwise and modified
to a vertical line with half--integer real part for $-\eps N< \Im(z)<\eps N$. We
refer to Figure \ref{Fig_contour_modification} for an illustration.

\begin{figure}[h]
\begin{center}
 {\scalebox{0.7}{\includegraphics{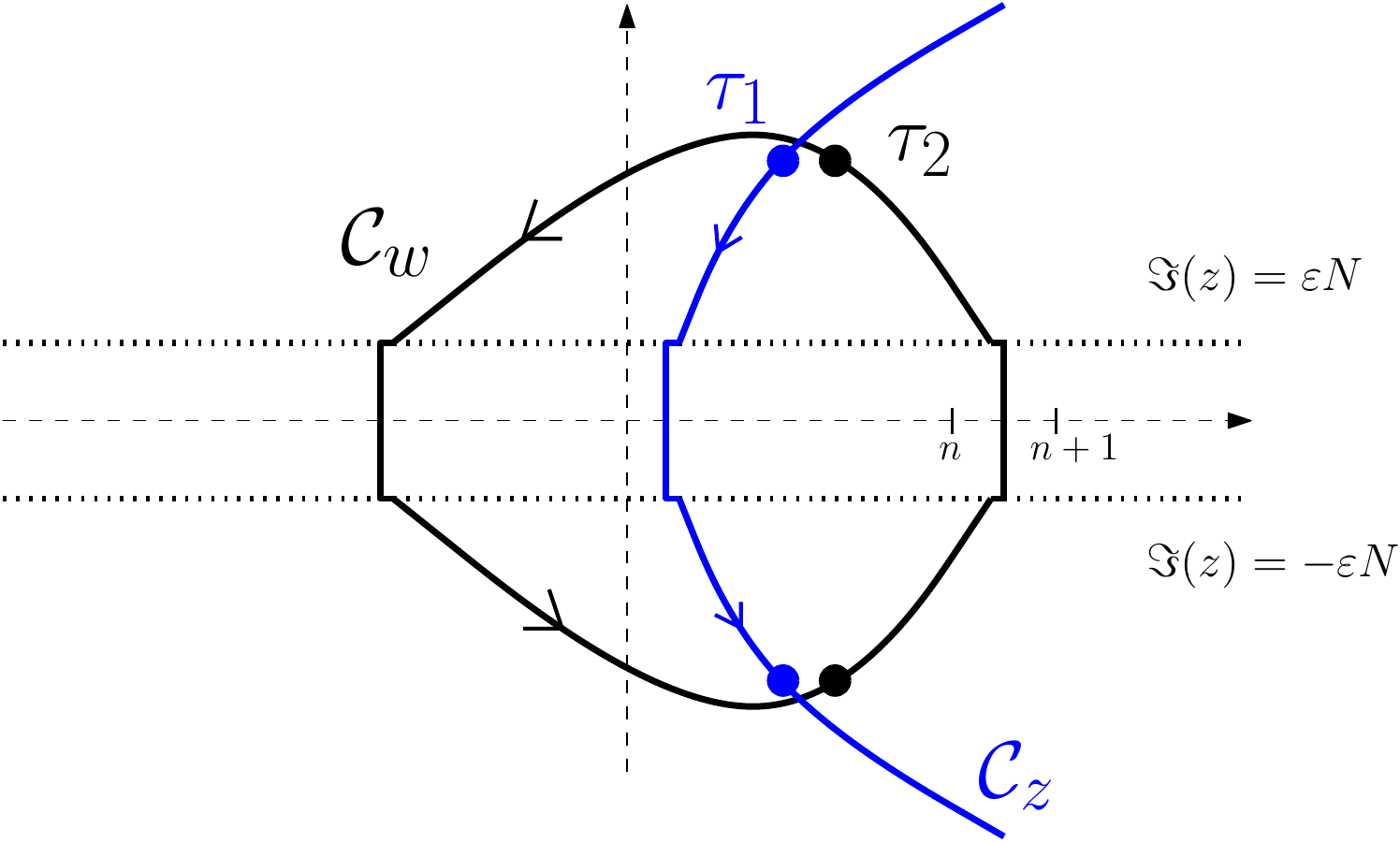}}}
\end{center}
 \caption{$\C_z$ and $\C_w$ contours, which are the level lines of $\Im(G_{1,2}(z))$ (cf.\ Figure \ref{Fig_real_part}) modified near the real axis. \label{Fig_contour_modification}}
\end{figure}

Note that due to convergence $G_{1,2}^{(N)}(z)\rightrightarrows G^{\infty}(z)$, the contours do not
oscillate as $N\to\infty$, but smoothly approximate similar contours constructed using
$G^{\infty}(z)$.

We next deform the $z$ and $w$ contours in \eqref{eq_correlation_kernel} into $\C_z$
and $\C_w$, respectively. Lemmas \ref{Lemma_contour_positioning},
\ref{Lemma_contour_positioning_cont} imply that the only residues which we collect
in this deformation are those coming from the $\frac{1}{w-z}$ pole. Let us first
deform the $z$ contour (there is no residue coming from $w=z$ at this stage, as $w$
is very large), and then proceed to the $w$--contour. Therefore, the result of the
deformation is the $z$--integral over a contour $\C^{def}_z\subset \C_z$, of the
$w$--residue of the integrand in the double integral in
\eqref{eq_correlation_kernel} at point $w=z$, i.e.\
\begin{multline}
\label{eq_Cor_kernel_contours_transformed}
 K(x_1,n_1;x_2,n_2)= -\1_{n_2<n_1} \1_{x_2\le x_1}
 \frac{(x_1-x_2+1)_{n_1-n_2-1}}{(n_1-n_2-1)!}\\+ \frac{(N-n_1)!}{(N-n_2-1)!}\frac{1}{2\pi\ii} \int_{\C^{def}_z} \frac{dz}{(z-x_1)(z-x_2+N-n_2)} \cdot
\frac{\exp(G_2(z)}{\exp (G_1(z))}\\ +
 \frac{(N-n_1)!}{(N-n_2-1)!} \frac{1}{(2\pi \ii)^2}
\oint_{\C_z} dz \oint_{\C_w} dw \frac{1}{w-z} \cdot \frac{1}{(w-x_1)(z-x_2+N-n_2)}
\cdot \frac{\exp(G_2(z))}{\exp(G_1(w))}.
\end{multline}
Note that $\C^{def}_z$ is passed from bottom to top and it intersects the real line
to the right from $x_1$.

\smallskip

The change of variables $u=z/N$, $v=w/N$ transforms the kernel $K(x_1,n_1;x_2,n_2)$
into the form
\begin{multline} \label{eq_Cor_kernel_variables_changed}
 K(x_1,n_1;x_2,n_2)= -\1_{n_2<n_1} \1_{x_2\le x_1}
 \frac{(x_1-x_2+1)_{n_1-n_2-1}}{(n_1-n_2-1)!}\\+ \frac{(N-n_2)_{n_2+1} N^{n_1}}{(N-n_1+1)_{n_1} N^{n_2+1}}\frac{1}{2\pi\ii} \int_{\C^{def}_u} \frac{du}{(u-x_1/N)(u-x_2/N+1-n_2/N)} \cdot
\frac{\exp(N G_2^{(N)}(u))}{\exp(NG_1^{(N)}(u))}\\ +
 \frac{(N-n_2)_{n_2+1} N^{n_1}}{(N-n_1+1)_{n_1} N^{n_2+1}} \frac{1}{(2\pi \ii)^2}
\oint_{\C_u} du \oint_{\C_v}  \frac{dv}{v-u} \cdot
\frac{1}{\left(v-\frac{x_1}{N}\right)\left(u-\frac{x_2}{N}+1-\frac{n_2}{N}\right)}
\frac{\exp(NG_2^{(N)}(u))}{\exp(NG_1^{(N)}(v))},
\end{multline}
where $u$ and $v$ contours are the contours of
\eqref{eq_Cor_kernel_contours_transformed} rescaled by $N$. Note that since the
functions $G_1^{(N)}$ and $G_2^{(N)}$ uniformly converge to $G^{\infty}$ outside
$\eps$--neighborhood of the real line, and the contours are the level lines of the
imaginary part of the former functions, they converge to similar level lines for
$G^{\infty}$. In particular, the lengths of all the involved contours are bounded as
$N\to\infty$, and $\C_u$ might intersect with $\C_v$ only in a neighborhood of
$\tau_1/N$.

Let us analyze the $N\to\infty$ behavior of each term in
\eqref{eq_Cor_kernel_variables_changed}. The first line does not depend on $N$. For
the second line observe that for $x_1\ge x_2$, $n_1\ge n_2$ we have
\begin{multline}
\label{eq_ratio_G_functions}
 \frac{\exp(NG_2^{(N)}(u))}{\exp(NG_1^{(N)}(u))}=\frac{\prod\limits_{a=1}^{N-n_1 + (n_1-n_2)}\left(u-\frac{x_1}{N}+\frac{a}{N}+\frac{x_1-x_2}N\right)}
 {\prod\limits_{a=1}^{N-n_1}\left(u-\frac{x_1}{N}+\frac{a}{N}\right)}
\\=\frac{\prod\limits_{a=N-n_1+1}^{N-n_1+(n_1-n_2)+(x_1-x_2)}\left(u-\frac{x_1}{N}+\frac{a}{N}\right)}
{\prod\limits_{a=1}^{x_1-x_2}\left(u-\frac{x_1}{N}+\frac{a}{N}\right)}.
\end{multline}
Due to Lemmas \ref{Lemma_contour_positioning}, \ref{Lemma_contour_positioning_cont}
 the integration contours are
bounded away from the points $u=x_1/N$ and $u=(x_1+n_1-N)/N$. Therefore,
\eqref{eq_ratio_G_functions} uniformly converges on the integration contours and
thus the second line in \eqref{eq_Cor_kernel_variables_changed} is as $N\to\infty$
$$
 \frac{(1-n_1/N)^{1+n_2-n_1}}{2\pi\ii} \int_{\C^{def}_u}
 \frac{\left(u-\frac{x_1}{N}\right)^{x_2-x_1-1}}{
 \left(u-\frac{x_1}{N}+1-\frac{n_1}{N}\right)^{1+n_2-n_1+x_2-x_1}} du + o(1).
$$
When $x_1<x_2$ or $n_1<n_2$ the computation is the same. Since the point of the
intersection of the contours $\C_u$ and $\C_v$ approaches $\tau_1/N$ as
$N\to\infty$, the final asymptotic for the second line of
\eqref{eq_Cor_kernel_variables_changed} is
\begin{equation}
 \frac{(1-n_1/N)^{1+n_2-n_1}}{2\pi\ii} \int_{\bar \tau_1/N}^{\tau_1/N}
 \frac{\left(u-\frac{x_1}{N}\right)^{x_2-x_1-1}}{
 \left(u-\frac{x_1}{N}+1-\frac{n_1}{N}\right)^{1+n_2-n_1+x_2-x_1}} du + o(1),
\end{equation}
where the integration contour crosses the real axis to the right from
$\frac{x_1}{N}$.

Now we turn to the third line in \eqref{eq_Cor_kernel_variables_changed}. Fix any
$\delta>0$. We claim that outside the $\delta$--neighborhood of the points
$\tau_1/N$, $\bar \tau_1/N$ the integrand is exponentially (in $N$) small. Indeed,
by the construction of the contours, $\Re( G_2^{(N)}(u)$ (strictly) decreases as we
move away from the point $\tau_2/N$ (similarly with $\bar \tau_2/N$) as long as we
do not get into the $\eps$--neighborhood of the real axis. Lemma
\ref{Lemma_bound_derivative} gives a uniform bound for the derivative of
$G_2^{(N)}(u)$ near the real axis, which shows, that $\Re( G_2^{(N)}(u)$ can not
grow much near the real axis.

In the same way $\Re( G_1^{(N)}(u))$ increases as we move away from $\tau_1/N$ and
does not grow much near the real axis, where monotonicity no longer holds.

On the other hand for small values of $\delta$, we can Taylor expand $G_1^{(N)}(u)$
and $G_2^{(N)}(u)$ near the points $\tau_1/N$, $\tau_2/N$, respectively. Since
$G_i^{(N)}(u)$ uniformly converges to $G^{\infty}(u)$, we essentially deal with the
Taylor expansion of the latter function. Since $\tau_1/N$, $\tau_2/N$ are critical
points of the corresponding functions, we have
\begin{multline} \label{eq_near_critical_expansion}
 \exp\bigl(N(G_2^{(N)}(u)-G_1^{(N)}(v))\bigr)=\exp\bigl(N(G_2^{(N)}(\tau_2/N)-G_1^{(N)}(\tau_1/N))\bigr)
\\
\times \exp\Bigl(N \bigl(  (G_2^{(N)})'' (\tau_2/N) (u-\tau_2/N)^2 - (G_1^{(N)})''
(\tau_1/N) (v-\tau_1/N)^2 + O( |u-\tau_2/N|^3+ |v-\tau_1/N|^3) \bigr) \Bigr)
\end{multline}
Since the difference $G_1^{(N)}(u)-G_2^{(N)}(u)$ is (uniformly) of order $1/N$ as
$N\to\infty$, the factor
$\exp\bigl(N(G_2^{(N)}(\tau_2/N)-G_1^{(N)}(\tau_1/N))\bigr)$ stays bounded as
$N\to\infty$. Turning to the second line in \eqref{eq_near_critical_expansion}, note
that by the definition, the tangent to the $\C_u$ contour at $\tau_1/N$ is such that
on this tangent $(G_2^{(N)})'' (\tau_2/N) (u-\tau_2/N)^2$ is negative real and the
tangent to the $\C_v$ contour at $\tau_2/N$ is such that $(G_1^{(N)})'' (\tau_1/N)
(v-\tau_1/N)^2$ is positive real. Therefore, the integral (over
$\delta$--neighborhood of $\tau_1/N$, along our contours) in the third line of
\eqref{eq_Cor_kernel_variables_changed} decays as $N\to\infty$ (in fact, it behaves
as $O(N^{-1/2})$).

\bigskip

Summing up, \eqref{eq_Cor_kernel_variables_changed} behaves when $N\to\infty$ as
\begin{multline} \label{eq_asymptotic_bulk_two_terms}
 K(x_1,n_1;x_2,n_2)= -\1_{n_2<n_1} \1_{x_2\le x_1}
 \frac{(x_1-x_2+1)_{n_1-n_2-1}}{(n_1-n_2-1)!}\\+
 \frac{(1-n_1/N)^{1+n_2-n_1}}{2\pi\ii} \int_{\bar \tau_1/N}^{\tau_1/N}
 \frac{\left(u-\frac{x_1}{N}\right)^{x_2-x_1-1}}{
 \left(u-\frac{x_1}{N}+1-\frac{n_1}{N}\right)^{1+n_2-n_1+x_2-x_1}} du + o(1),
\end{multline}
where the integration contour crosses the real axis to the right from
$\frac{x_1}{N}$.

We claim that when $n_2<n_1$, $x_2\le x_1$, then the first term in
\eqref{eq_asymptotic_bulk_two_terms} is precisely the minus residue of the second
term at $u=x_1/N$. Indeed, from one side
$$
  \frac{(x_1-x_2+1)_{n_1-n_2-1}}{(n_1-n_2-1)!}=
  \frac{(x_1-x_2+n_1-n_2-1)!}{(x_1-x_2)!(n_1-n_2-1)!}.
$$
On the other side,
\begin{multline}
{\rm Res}_{u=\frac{x_1}{N}}  \left[
 \frac{\left(u-\frac{x_1}{N}+1-\frac{n_1}{N}\right)^{n_1-n_2+x_1-x_2-1}}
 {\left(u-\frac{x_1}{N}\right)^{x_1-x_2+1}} \right]\\ = \frac{1}{(x_1-x_2)!}\left(\frac{\partial}{\partial
 u}\right)^{x_1-x_2}
 \left(u-\frac{x_1}{N}+1-\frac{n_1}{N}\right)^{n_1-n_2+x_1-x_2-1} \Biggr|_{u=x_1/N}
\\ = \frac{(n_1-n_2+x_1-x_2-1)!}{(x_1-x_2)!(n_1-n_2-1)!}
\left(1-\frac{n_1}{N}\right)^{n_1-n_2-1}.
\end{multline}
We conclude that
\begin{multline} %\label{eq_asymptotic_bulk_two_terms}
 K(x_1,n_1;x_2,n_2)= \frac{(1-n_1/N)^{1+n_2-n_1}}{2\pi\ii} \int_{\bar \tau_1/N}^{\tau_1/N}
 \frac{\left(u-\frac{x_1}{N}\right)^{x_2-x_1-1}}{
 \left(u-\frac{x_1}{N}+1-\frac{n_1}{N}\right)^{1+n_2-n_1+x_2-x_1}} du + o(1),
\end{multline}
where the integration contour crosses the real axis to the right from $x_1/N$ when
$n_2\ge n_1$ and inside the interval $(\frac{x_1+n_1-1}{N},\frac{x_1}{N})$
otherwise.
\end{proof}

\section{Law of Large Numbers for tilings}
\label{Section_LLN}

Take a lozenge tiling of an arbitrary \emph{simply-connected} domain on a regular
triangular grid. We aim to define for each vertex $\vv$ of the grid inside the
domain the value of  \emph{height function} $H(\vv)$. For that we choose two types
of lozenges out of three (we have chosen \emph{non-horizontal} ones, see Figure
\ref{Fig_paths_height}) and draw their middle lines, thus arriving at a family of
non-intersecting paths corresponding to the tiling. The value of the height function
increases by $1$ when we cross such a path (from bottom to top), this condition
defines the values of $H(\cdot)$ up to an addition of an arbitrary constant. We fix
this constant by a convention that $H$ vanishes at the bottom point of the left-most
vertical of the domain, see Figure \ref{Fig_paths_height}.

\begin{figure}[h]
\begin{center}
 {\scalebox{0.9}{\includegraphics{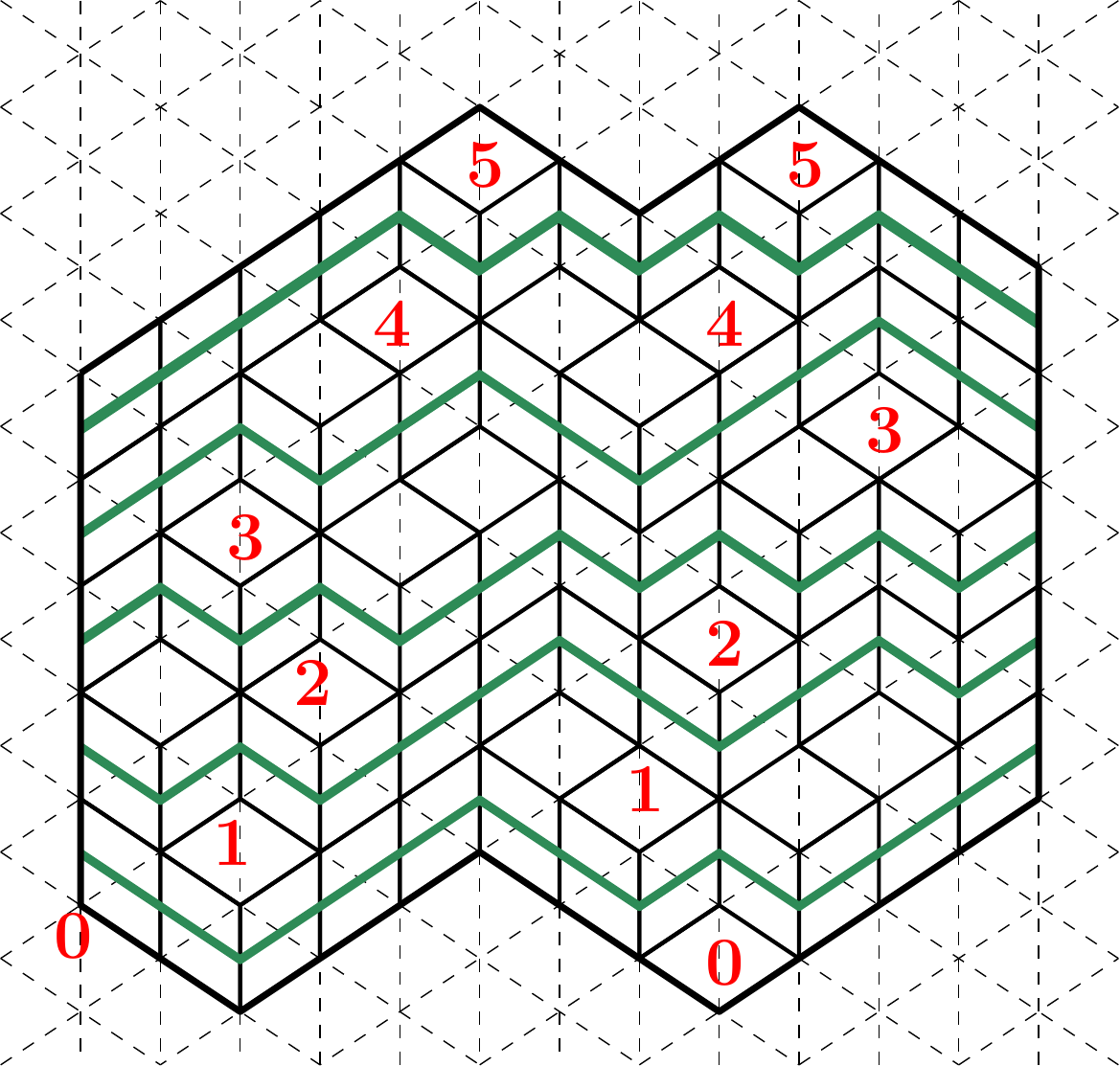}}}
\end{center}
 \caption{Height function of a lozenge tiling.\label{Fig_paths_height}}
\end{figure}

Observe that along the boundary of the domain, the values of the height function do
not depend on the choice of lozenge tiling. Indeed, the height function changes
linearly along the vertical segments of the boundary and is constant along two other
types of boundary segments.

Let $\Omega_L$, $L=1,2,\dots$, be a sequence of simply--connected domains on the
triangular grid such that each $\Omega_L$ has at least one lozenge tiling. Further,
let $\Omega$ be a simply--connected domain with piecewise-smooth boundary $\partial
\Omega$, and let $h$ be a continuous real function on $\partial \Omega$.

For two sets $A,B\subset \mathbb R^k$ we say that they are within $\eps$--distance
from each other, if for each $x\in A$ there exists $y\in B$ such that
$dist(x,y)<\eps$, and for each $y\in B$ there exists $x\in A$, such that
$dist(x,y)<\eps$. Here $dist(\cdot,\cdot)$ is the Euclidian distance.

We say that $\Omega_L$ approximates $(\Omega,h)$ as $L\to\infty$, if $\frac{1}{L}
\Omega_L \subset \Omega$ for each $L$, and for each $\eps>0$ the set $(\partial
\Omega, h)=\{(x,y)\in\mathbb R^2 \times \mathbb R\mid x\in \partial \Omega,
y=h(x)\}$ becomes within $\eps$--distance of $\left(\frac{1}{L}\partial \Omega_L,
\frac{1}{L}H_{\Omega_L}\right)$ as $L\to\infty$. Here $H_{\Omega_L}$ is the height
function of lozenge tilings of $\Omega_L$ on $\partial \Omega_L$.

\begin{theorem} \label{Theorem_LLN} Suppose that the domains $\Omega_L$ approximate $(\Omega,h)$ as
$L\to\infty$. Then the rescaled height function $\frac{1}{L} H_L(\mathbf x L,
\mathbf y L)$ of uniformly random lozenge tiling of $\Omega_L$ converges in uniform
norm, in probability to a non-random function $H_\Omega$ on $\Omega$, which
coincides with $h$ on $\partial \Omega$.
%The expectations $\E \frac{1}{L}
%H_L(\mathbf x L, \mathbf y L )$ also converge to the same limit.
\end{theorem}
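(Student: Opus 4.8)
The plan is to deduce Theorem~\ref{Theorem_LLN} from the variational principle for dimers in the spirit of Cohn--Kenyon--Propp. First I would pass to the height-function description: a lozenge tiling of $\Omega_L$ is encoded by an integer height function whose discrete gradient takes one of three values, so that after rescaling by $L$ the admissible limiting profiles are precisely the Lipschitz functions $H$ on $\Omega$ whose gradient $\nabla H$ lies in a fixed triangle $\Delta$ (the Newton polygon of the model). Since every rescaled height function $\frac1L H_L$ is uniformly Lipschitz with a constant dictated by $\Delta$, the family $\{\frac1L H_L\}$ is precompact in the uniform topology by Arzel\`a--Ascoli, and the approximation hypothesis forces every subsequential limit to restrict to $h$ on $\partial\Omega$. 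Thus it suffices to prove that among admissible profiles with boundary data $h$ there is a \emph{unique} candidate limit and that $\frac1L H_L$ concentrates on it.

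The analytic heart of the argument is the surface tension (local entropy) $\sigma:\Delta\to\mathbb R$, defined so that the number of tilings of a mesoscopic box of area $A$ with approximately constant slope $s$ grows like $\exp\!\big(-\sigma(s)\,A\,L^2+o(L^2)\big)$; for lozenge tilings $\sigma$ admits a closed expression through the Lobachevsky function, and the decisive property is its \emph{strict} convexity on the interior of $\Delta$. I would then introduce the rate functional $I(H)=\int_\Omega \sigma(\nabla H)\,dA$ and establish matching bounds: the probability that $\frac1L H_L$ lies uniformly within $\delta$ of a fixed admissible $H$ equals $\exp\!\big(-L^2(I(H)-\min I)+o(L^2)\big)$. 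The lower bound is obtained by partitioning $\Omega$ into small cells, pinning the discrete height near $L H$ along the cell boundaries, and multiplying the resulting per-cell entropies; the upper bound follows from a sub/superadditivity argument for the partition functions together with the uniform (Gibbs) property of the measure.

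Strict convexity of $\sigma$ guarantees that $I$ has a \emph{unique} minimizer $H_\Omega$ among admissible functions matching $h$ on $\partial\Omega$; this solves the associated variational problem and is by definition the limit shape. Feeding the uniqueness of the minimizer into the large-deviation estimates yields, for every $\delta>0$, that $\Pr\big[\,\|\tfrac1L H_L-H_\Omega\|_\infty>\delta\,\big]\to 0$, which is exactly the claimed convergence in probability in the uniform norm.

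The step I expect to be the main obstacle is the large-deviation machinery, and specifically the \emph{uniformity} of the counting estimates. Establishing the per-cell entropy asymptotics uniformly as the slope ranges over compact subsets of the interior of $\Delta$, and controlling the boundary layers where the slope degenerates to an edge of $\Delta$ (the frozen regions), is delicate; so is proving strict convexity of $\sigma$ and the regularity of $H_\Omega$ needed to make the cell-patching argument rigorous near $\partial\Omega$. What makes these estimates tractable is the monotone Lipschitz structure of height functions together with the exact product formula (of Vandermonde/MacMahon type) for the count of tilings of a single box.
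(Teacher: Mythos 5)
The paper does not prove this statement itself; it cites Cohn--Kenyon--Propp \cite{CKP} (with \cite{CEP}, \cite{KOS}), and your sketch is a faithful outline of exactly that variational-principle argument: Lipschitz compactness of rescaled height functions, the surface-tension/entropy functional, matching large-deviation bounds via cell decomposition, and uniqueness of the minimizer from strict convexity. So your proposal is correct and takes essentially the same route as the proof the paper relies on.
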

 The proof of Theorem \ref{Theorem_LLN} is given in \cite{CKP}, see also \cite{CEP},
 \cite{KOS}. The function $H_\Omega$ is identified there with a solution
 to a certain variational problem and therefore depends only on $\Omega$ and $h$. More direct descriptions for restrictive classes
 of domains $\Omega$ were given in \cite{KO} and \cite{BufGor}.
 The function $H_\Omega$ is always Lipshitz, but its derivatives
 might have discontinuities, e.g.\ at the points where the inscribed circle is tangent to the
 hexagon in Figure \ref{Fig_Hex}.

\bigskip

Theorem \ref{Theorem_LLN} can be reinterpreted as the Law of Large Numbers for the
average proportions of lozenges of three types. The definition of the height
function implies that the derivatives of $H_\Omega$ in up--left and up--right grid
directions correspond to average densities of two types of lozenges:
$p^{{\scalebox{0.15}{\includegraphics{lozenge_v_down.pdf}}}}$ and
$p^{{\scalebox{0.15}{\includegraphics{lozenge_v_up.pdf}}}}$, respectively, see
Figure \ref{Fig_Hd}.
 The third density is then also reconstructed from directional derivatives,
e.g.\ using ,
$p^{{\scalebox{0.15}{\includegraphics{lozenge_hor.pdf}}}}=1-p^{{\scalebox{0.15}{\includegraphics{lozenge_v_down.pdf}}}}-
p^{{\scalebox{0.15}{\includegraphics{lozenge_v_up.pdf}}}}$. Therefore, the limit
shape $H_\Omega(x,y)$ can be encoded by $3$ functions
$p^{{\scalebox{0.15}{\includegraphics{lozenge_v_down.pdf}}}}(x,y)$,
$p^{{\scalebox{0.15}{\includegraphics{lozenge_v_up.pdf}}}}(x,y)$, and
$p^{{\scalebox{0.15}{\includegraphics{lozenge_hor.pdf}}}}(x,y)$, which sum up to
identical $1$. Then Theorem \ref{Theorem_LLN} yields that for any subdomain
$\mathcal D\subset \Omega$ the numbers of lozenges (inside $L\mathcal D$ in
uniformly random lozenge tiling of $\Omega_L$) of three types divided by $L$
converges as $L\to\infty$, in probability, to the integrals over $\mathcal D$ of
these three functions, as in Theorem \ref{Theorem_main_intro}.

\begin{figure}[h]
\begin{center}
 {\scalebox{0.9}{\includegraphics{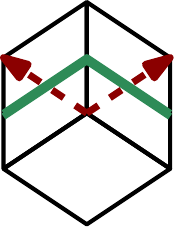}}}
\end{center}
 \caption{Derivatives in grid directions correspond to proportions of lozenges.\label{Fig_Hd}}
\end{figure}

\section{Bulk limits}

\label{Section_Main_proof}

In this section we prove the main results announced in the introduction. Theorem
\ref{Theorem_main_intro} is a combination of Theorems \ref{Theorem_main_bulk} and
\ref{Theorem_LLN_bulk}. Theorem \ref{Theorem_tilings} is a refinement of Corollary
\ref{Corollary_tilings}.

For a probability measure $\nu$ on $\mathbb R$ with bounded by $1$ density $\nu(x)$,
and two parameters $\mathbf x\in\mathbb R$, $0<\boldsymbol{\eta}<1$ we define a
function $G_{\nu,\mathbf x,\boldsymbol{\eta}}(z)$ through the formula
\eqref{eq_action_continuous}. It is an analytic function on $\mathbb C\setminus
\mathbb R$ with derivative given by
$$
 G'_{\nu,\mathbf x,\boldsymbol{\eta}}(z)= \int_{\mathbf x+\boldsymbol{\eta} -1}^{\mathbf x} \frac{dt}{z-t} -
 \int_{\mathbb R} \frac{1}{z-y} \nu(y) dy=\ln(z+1-\mathbf x-\boldsymbol{\eta})-\ln(z-\mathbf
 x)-
 \int_{\mathbb R} \frac{1}{z-y} \nu(y) dy
$$

According to Lemma \ref{Lemma_number_of_roots_cont}, the equation
$G'_{\nu,\boldsymbol{\eta}, \mathbf x}(z)=0$ has at most $1$ solution
$z=\tau(\nu,\mathbf x,\boldsymbol{\eta})$ in the upper half--plane. Denote
$$
 \xi(\nu,\mathbf x,\boldsymbol{\eta})=\frac{\tau(\nu,\mathbf x,\boldsymbol{\eta})-\mathbf x}{\tau(\nu,\mathbf x,\boldsymbol{\eta})+1-\mathbf x-\boldsymbol{\eta}},
$$
and note that when $\tau$ is in the upper half--plane, then so is $\xi$. We will say
that $\xi(\nu,\mathbf x,\boldsymbol{\eta})$ is \emph{well-defined}, if the equation
$G'_{\nu,\mathbf x,\boldsymbol{\eta}}(z)$ has a non-real solution.

\bigskip
Given a complex number $\xi$ with positive imaginary part the \emph{incomplete Beta
kernel} (see \cite{OR}, \cite{KOS}) is defined through
\begin{equation}
\label{eq_Incomplete_beta_2} K_\xi(x_1,n_1;x_2,n_2)=\frac{1}{2\pi \ii} \int_{ \bar
\xi }^{\xi} w^{x_2-x_1-1} (1-w)^{n_2-n_1}\,
  dw,
\end{equation}
where the integration contour crosses the real line inside $(0,1)$ for $n_2\ge n_1$
and inside $(-\infty,0)$ for $n_2<n_1$.

Note that when $n_1=n_2$, then we have
\begin{equation}
\label{eq_sine_kernel}
 K_\xi(x_1,n;x_2,n)=\frac{\sin(\phi(x_1-x_2)}{\pi (x_1-x_2)}, \quad \phi=\arg(\xi),
\end{equation}
which justifies the second name of $K_\xi$ which is the \emph{extended sine kernel.}

\begin{figure}[t]
\begin{center}
 {\scalebox{1.0}{\includegraphics{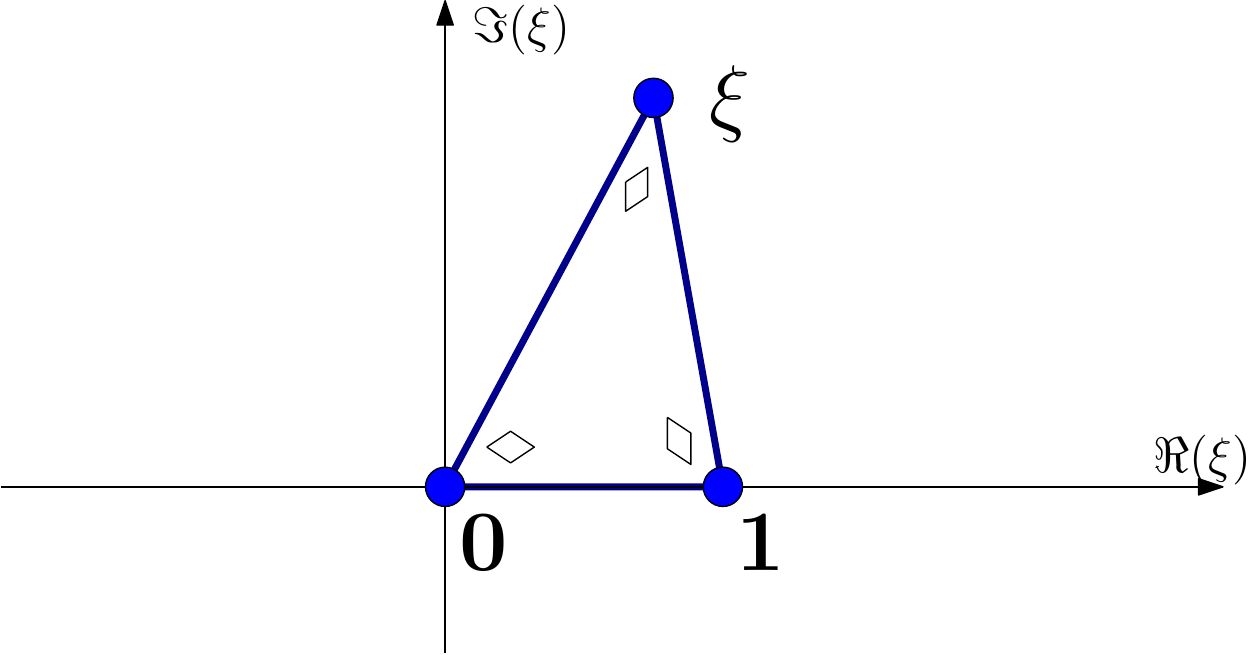}}}
\end{center}
 \caption{Correspondence between complex slope $\xi$ and local proportions of lozenges identifies with
 angles of a triangle.\label{Fig_triangle}}
\end{figure}

Further, we define $P_\xi$ as a probability measure on lozenge tilings of the plane
with correlation functions of \emph{horizontal} lozenges given for each
$k=1,2,\dots$ by:
\begin{equation} \label{eq_limit_cor}
 P_\xi[\text{there is a horizontal lozenge at }(x_i,n_i),\, 1\le i
 \le k] = \det_{i,j=1}^k [K_\xi(x_i,n_i;x_j,n_j)].
\end{equation}
It is known that $P_\xi$ is a translation invariant ergodic Gibbs measure, cf.\
\cite{Sheffield}, \cite{KOS}. We call $\xi$ the \emph{complex slope}. The
$P_\xi$--average proportions
$p^{{\scalebox{0.15}{\includegraphics{lozenge_v_down.pdf}}}}$,
$p^{{\scalebox{0.15}{\includegraphics{lozenge_v_up.pdf}}}}$, and
$p^{{\scalebox{0.15}{\includegraphics{lozenge_hor.pdf}}}}$
 of three types of lozenges
can be reconstructed through the following  geometric procedure (see \cite{KOS},
\cite{KO}): they are the angles of the triangle on $\mathbb C$ with vertices $0$,
$1$ and $\xi$ normalized to sum up to $1$, see Figure \ref{Fig_triangle}. In
particular,
$p^{{\scalebox{0.15}{\includegraphics{lozenge_hor.pdf}}}}=\frac{1}{\pi}\arg(\xi)$,
which matches \eqref{eq_sine_kernel} at $x_1=x_2$. The triplet
($p^{{\scalebox{0.15}{\includegraphics{lozenge_v_down.pdf}}}},
p^{{\scalebox{0.15}{\includegraphics{lozenge_v_up.pdf}}}},
p^{{\scalebox{0.15}{\includegraphics{lozenge_hor.pdf}}}})$ is the (geometric) slope
of $P_\xi$. \cite{Sheffield} shows that $P_\xi$ is a \emph{unique} translation
invariant ergodic measure of such slope.

The restriction of $P_\xi$ to horizontal lozenges on a vertical line is described by
the kernel \eqref{eq_sine_kernel} and has the name \emph{discrete Sine process}.

\begin{theorem} \label{Theorem_main_bulk} For each $N=1,2,\dots$, let $\t^N=(\t^N_1>\t^N_2>\dots>\t^N_N)$ be a
random $N$--tuple of integers. Suppose that:
\begin{itemize}
 \item For $\eps>0$ the random variables $\frac{1}{N}\sum_{i=1}^N\ln^{1+\eps}\left(1+|\t^N_i|/N\right)$ are tight as $N\to\infty$.
 \item The random probability measures $\mu_N=\frac{1}{N}\sum_{i=1}^N
 \delta_{\t^N_i/N}$ converge weakly, in probability to a deterministic measure
 $\mu$.
\end{itemize}
Take a sequence $\mathcal P(N)$ of probability measures on trapezoids, such that for
each $N=1,2,\dots,$ the vector $\t^N$ is $\mu_{\mathcal P(N)}$--distributed. Take
any point $(\mathbf x,\boldsymbol{\eta})$ with $0<\boldsymbol{\eta}<1$, such that
the complex number $\xi(\mu,\mathbf x,\boldsymbol{\eta})$ is well-defined. If
$n(N)$, $x(N)$ are two sequences of integers, such that $\lim_{N\to\infty}
n(N)/N=\boldsymbol{\eta}$, $\lim_{N\to\infty} x(n)/N=\mathbf x$, then the point
process of $\mathcal P(N)$--distributed lozenges near the point $(x(N), n(N))$
weakly converges as $N\to\infty$ to $P_{\xi(\mu,\mathbf x,\boldsymbol{\eta})}$
\end{theorem}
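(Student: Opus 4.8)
The plan is to reduce the convergence of the full point process of horizontal lozenges to the convergence of all finite-order correlation functions, exploiting the determinantal structure available when the top row $\t^N$ is \emph{frozen}. The starting observation is that, conditionally on a fixed realization of $\t^N$, Theorem \ref{Theorem_Leo} gives the correlation functions of the lozenges inside the trapezoid as a determinant of the kernel $K$ from \eqref{eq_correlation_kernel}. Thus for each frozen configuration I have exact formulas, and Theorem \ref{Theorem_Kernel_asympt} tells me that these kernel entries converge, up to $o(1)$, to the entries of the incomplete Beta kernel $K_{\xi}$ of \eqref{eq_Incomplete_beta_2}, with complex slope determined by the critical point $\tau$ of the continuous action $G_{\nu,\mathbf x,\boldsymbol{\eta}}$ built from the empirical measure of $\t^N$. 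The crucial conceptual point, flagged already in the introduction, is that the bulk asymptotics of $K$ depend on $\t^N$ \emph{only through the global empirical measure} $\mu_N$; this is precisely what Theorem \ref{Theorem_Kernel_asympt} delivers, since its statement quantifies the limit through the weak limit $\mu$ of the $\mu_N$ and through $\tau$, and the remainder is uniformly small under the stated log-moment control on the $\t_i^N/N$.

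The next step is to handle the fact that $\t^N$ is now \emph{random} rather than frozen, so that the lozenge process inside the trapezoid need not be determinantal. First I would fix a finite collection of observation points $(x_i(N),n_i(N))$, $i=1,\dots,k$, all scaling to $(\mathbf x, \boldsymbol{\eta})$, and write the $k$-point correlation function of the $\mathcal P(N)$-process as the conditional expectation, over the random top row, of the determinantal $k$-point function computed with frozen $\t^N$. The strategy is then a conditioning-and-averaging argument: on the event where $\mu_N$ is $\delta$-close to $\mu$ in a fixed weak metric and the log-moment is bounded by some large $D$, Theorem \ref{Theorem_Kernel_asympt} applies uniformly and forces the conditional $k$-point determinant to be within $o(1)$ of $\det_{i,j=1}^k[K_{\xi(\mu,\mathbf x,\boldsymbol{\eta})}(x_i,n_i;x_j,n_j)]$; on the complementary event, hypotheses (tightness of the log-moment, convergence of $\mu_N$ to the deterministic $\mu$ in probability) make the probability tend to $0$, while the conditional correlation functions stay bounded by $1$ since they are probabilities. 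Averaging and letting first $N\to\infty$ and then $\delta\to 0$ shows that every finite correlation function of the $\mathcal P(N)$-process converges to the corresponding determinant of $K_{\xi}$, which by \eqref{eq_limit_cor} is exactly the correlation function of $P_{\xi(\mu,\mathbf x,\boldsymbol{\eta})}$. To upgrade convergence of correlation functions to weak convergence of the point process I would invoke the standard fact that a simple point process with uniformly bounded correlation functions (here bounded by $1$, as these are inclusion probabilities of a determinantal process) is determined by them, so convergence of all correlations implies weak convergence.

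I would also need to verify two compatibility points so that Theorem \ref{Theorem_Kernel_asympt} is genuinely applicable on the good event. First, the hypothesis that $\xi(\mu,\mathbf x,\boldsymbol{\eta})$ is well-defined means, by Lemma \ref{Lemma_number_of_roots_cont}, that the limiting action $G_{\nu,\mathbf x,\boldsymbol{\eta}}$ has a genuinely non-real critical point $\tau$; by the uniform convergence $G_1^{(N)}\rightrightarrows G^{\infty}$ and Rouch\'e's theorem (exactly as in the proof of Lemma \ref{Lemma_number_of_roots_cont}), the discrete critical points $\tau_1,\tau_2$ then exist, are non-real, and scale so that $\Im(\tau_{1,2})>D^{-1}N$ and $|\tau_{1,2}|<DN$ for an appropriate $D$ depending only on $\mu$, $\mathbf x$, $\boldsymbol{\eta}$. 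Second, I should record that the identification of the complex slope $\xi$ with the geometric slope via Figure \ref{Fig_triangle} matches the law of large numbers; this is the link between the local Gibbs measure and the global limit shape, and it is what ties Theorem \ref{Theorem_main_bulk} to the densities $p^{\scalebox{0.15}{\includegraphics{lozenge_v_down.pdf}}}$, etc. The main obstacle I anticipate is making the conditioning argument fully uniform: I must ensure that the $o(1)$ remainder in Theorem \ref{Theorem_Kernel_asympt} is uniform over all $\t^N$ lying in the good event (same $\delta$-neighborhood of $\mu$ and same log-moment bound), so that the error does not degrade when averaged against the random law of $\t^N$. This is exactly where the quantitative, uniform nature of the remainder in Theorem \ref{Theorem_Kernel_asympt}, together with the stability of the critical point $\tau$ under weak perturbations of the empirical measure, must be used carefully; once this uniformity is secured, the averaging step is routine.
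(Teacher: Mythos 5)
Your proposal is correct and follows essentially the same route as the paper: conditioning on the top row to invoke the determinantal formula of Theorem \ref{Theorem_Leo}, writing $\rho_k^{(N)}$ as the expectation of the frozen-boundary correlation functions (the paper's \eqref{eq_cor_mixture}), applying the uniform kernel asymptotics of Theorem \ref{Theorem_Kernel_asympt} on the high-probability event where the empirical measure is close to $\mu$ and the critical points are localized near $\tau(\mu,\mathbf x,\boldsymbol{\eta})$ via the convergence $G_i'(Nz)\rightrightarrows G'(z)$, and using the bound $0\le\rho_k\le 1$ to pass from convergence in probability to convergence of expectations. The only cosmetic difference is that you additionally discuss matching the complex slope to the geometric slope, which the paper defers to Theorem \ref{Theorem_LLN_bulk} and is not needed for this statement.
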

\begin{remark} \label{Remark_convergence_meaning}
 In more details, the last claim says that if
 $\rho_k^{(N)}(x_1,n_1;x_2,n_2;\dots;x_k,n_k)$ is the $k$th correlation function computing
 the probability that there is a horizontal lozenge at each of the positions
 $(x_i,n_i)$, $i=1,2\dots,k$, in the $\mathcal P(N)$--random lozenge tiling, then for each
 $k=1,2,\dots,$ and each $N$-dependent collection of integers $(x_1,n_1;x_2,n_2;\dots;x_k,n_k)$,
 such that the differences $x_i-x(N)$ and $n_i-n(N)$ do not depend on $N$,
 the limit $
  \lim\limits_{N\to\infty} \rho_k^{(N)}(x_1,n_1;x_2,n_2;\dots;x_k,n_k)$
 exists and is given by \eqref{eq_limit_cor}.
\end{remark}
\begin{proof}[Proof of Theorem \ref{Theorem_main_bulk}]
 We fix $k$ and $x_1,n_1;\dots,x_k,n_k$, and aim to compute $\lim\limits_{N\to\infty}
 \rho_k^{(N)}(x_1,n_1;x_2,n_2;\dots;x_k,n_k)$.
 For $y=(y_1>\dots,y_N)$, let $\rho_k^y(\cdot)$ denote the $k$th correlation
 function of horizontal lozenges in tilings corresponding to uniformly random
 Gelfand--Tsetlin schemes with \emph{fixed} top row $y$. Then we can write
  \begin{equation}
 \label{eq_cor_mixture}
  \rho_k^{(N)}= \E_{\tau^N}\left[ \rho_k^{\tau^N} \right].
\end{equation}
 Theorem \ref{Theorem_Leo} expresses $ \rho^y_k(x_1,n_1;\dots,x_k,n_k)$ as a determinant involving correlation kernel $K(\cdot)$ given by \eqref{eq_correlation_kernel}. The
 next step is to apply Theorem \ref{Theorem_Kernel_asympt} and we need to check that
 its assumptions are satisfied.

Choose $\delta>0$ such that $\delta$--neighborhood of $\tau(\mu,\mathbf x,\mathbf
n)$ is bounded away from the real axis. The weak convergence of $\mu_N$ towards
$\mu$ and tightness of $\frac{1}{N}\sum_{i=1}^N\ln^{1+\eps}(1+|\t^N_i|)$  imply that
for $G_i(z)$ defined through (cf.\ \eqref{eq_action_logarithm} )
$$
G_i(z)=\sum_{a=1}^{N-n(N)- n_i} \ln(z-x(N)-x_i+a) - \sum_{r=1}^N \ln(z-\t_r^N), \quad i=1,\dots,k
$$
we have $G_i'(Nz)\rightrightarrows G'(z)$ in probability, and therefore the rescaled
by $N$ critical points of $G_i$ are in $\delta$--neighborhood of $\tau(\mu,\mathbf
x,\boldsymbol{\eta})$ with probability tending to $1$ as $N\to\infty$. We conclude
that with probability tending to $1$ the assumptions of Theorem
\ref{Theorem_Kernel_asympt} are valid and we can use it to conclude that
$$
 K(n_i,x_i;n_j,x_j)=\frac{1}{2\pi \ii}  \int_{\bar \xi(N)}^{\xi(N)}
 w^{x_j-x_i-1}(1-w)^{n_j-n_i}+o(1),
$$
where $o(1)$ is a remainder which tends to $0$ in probability as $N\to\infty$, and
$$ \xi(N)= \frac{\tau_i/N-x_i/N-x(N)/N}{\tau_i/N-x_i/N-x(N)/N+1-n_i/N-n(N)/N}$$ with
$\tau_i$ being the critical point of $G_i$ in the upper half--plane.

As $N\to\infty$, $\xi(N)$ converges in probability to
$$
 \xi=\frac{\tau(\mu,\mathbf x,\boldsymbol{\eta})-\mathbf{x}}{\tau(\mu,\mathbf x,\boldsymbol{\eta})-\mathbf{x}+1-\boldsymbol{\eta}}.
$$
Since the random variable under expectation in \eqref{eq_cor_mixture} is between $0$ and $1$, the
convergence in probability implies the convergence of expectations and therefore
\begin{equation}
\label{eq_cor_converge_final}
 \lim_{N\to\infty}\rho^{(N)}_k(n_1,x_1;\dots,n_k,x_k)=\det_{i,j=1}^k \left[\frac{1}{2\pi \ii}  \int_{\bar \xi(\mu,\mathbf x,\boldsymbol{\eta})}^{\xi(\mu,\mathbf x,\boldsymbol{\eta})}
 w^{x_j-x_i-1}(1-w)^{n_j-n_i}\right]. \qedhere
\end{equation}
\end{proof}

The next step is to link the complex slope $\xi$ of Theorem \ref{Theorem_main_bulk}
to the limit shape for the height function, as discussed in Section
\ref{Section_LLN}.

\begin{theorem} \label{Theorem_LLN_bulk} For each $N=1,2,\dots$, let $\t^N=(\t^N_1>\t^N_2>\dots>\t^N_N)$ be a
random $N$--tuple of integers. Suppose that:
\begin{itemize}
 \item There exists $C>0$ such that
 $|\t^N_i|<CN$ almost surely for all $1\le i \le N$.
 \item The random probability measures $\mu_N=\frac{1}{N}\sum_{i=1}^N
 \delta_{\t^N_i/N}$ converge weakly, in probability to a deterministic measure
 $\mu$.
\end{itemize}
Take a sequence $\mathcal P(N)$ of probability measures on trapezoids, such that for
each $N=1,2,\dots,$ the vector $\t^N$ is $\mu_{\mathcal P(N)}$--distributed. Then
the rescaled height function (cf.\ Theorem \ref{Theorem_LLN} ) converges as
$N\to\infty$ uniformly, in probability to a non-random limit shape $H_\mu(\mathbf
x,\boldsymbol{\eta})$, $x\in \mathbb R$, $0 \le \boldsymbol{\eta} \le \mathbf 1$, in
the coordinate system of Figure \ref{Fig_GT_tiling}; this limit shape is encoded by
three proportions of lozenges
$({p^{{\scalebox{0.15}{\includegraphics{lozenge_v_down.pdf}}}}(\mathbf{x},\boldsymbol{\eta}),
p^{{\scalebox{0.15}{\includegraphics{lozenge_v_up.pdf}}}}(\mathbf{x},\boldsymbol{\eta}),
p^{{\scalebox{0.15}{\includegraphics{lozenge_hor.pdf}}}}}(\mathbf{x},\boldsymbol{\eta}))$.

Take any point $(\mathbf x,\boldsymbol{\eta})$ with $0<\boldsymbol{\eta}<1$, such
that the proportions of lozenges are continuous and non-zero at this point. Then the
complex number $\xi(\mu,\mathbf x,\boldsymbol{\eta})$ is well-defined, and moreover
$({p^{{\scalebox{0.15}{\includegraphics{lozenge_v_down.pdf}}}}(\mathbf{x},\boldsymbol{\eta}),
p^{{\scalebox{0.15}{\includegraphics{lozenge_v_up.pdf}}}}(\mathbf{x},\boldsymbol{\eta}),
p^{{\scalebox{0.15}{\includegraphics{lozenge_hor.pdf}}}}}(\mathbf{x},\boldsymbol{\eta}))$
are precisely the normalized angles of the triangle $0,1, \xi(\mu,\mathbf
x,\boldsymbol{\eta})$. And vice-versa, if the complex number $\xi(\mu,\mathbf
x,\boldsymbol{\eta})$ is well-defined, then the proportions of lozenges at $(\mathbf
x,\boldsymbol{\eta})$ are continuous and given by the angles of the
$0,1,\xi(\mu,\mathbf x,\boldsymbol{\eta})$ triangle.
\end{theorem}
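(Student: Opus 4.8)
My plan is to treat the two halves of the statement separately: the existence of a deterministic limit shape $H_\mu$ I would deduce from the variational principle of \cite{CKP} (Theorem \ref{Theorem_LLN}), while the dictionary between the complex slope $\xi$ and the geometric slope I would obtain by combining the bulk convergence of Theorem \ref{Theorem_main_bulk} with the limit-shape description of \cite{BufGor}. For the law of large numbers, I would first condition on the top row $\t^N$. By the Gibbs property, given $\t^N$ the measure $\mathcal P(N)$ is the \emph{uniform} measure on tilings of the trapezoid whose horizontal boundary lozenges are frozen at $\t^N$, which is exactly the setting of uniformly random tilings of a fixed domain. The bound $|\t^N_i|<CN$ keeps the rescaled domains $\tfrac1N\Omega_{N,C(N)}$ inside a fixed bounded region, and the boundary height function along the right side is prescribed by the points $\t^N_i/N$. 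Since $\mu_N\to\mu$ weakly in probability, these rescaled boundary data converge in probability to a deterministic profile $h_\mu$ attached to a limiting domain $\Omega$. Conditionally on $\t^N$ being close to its limit I would invoke Theorem \ref{Theorem_LLN} to get uniform convergence, in probability, of $\tfrac1N H_N$ to the deterministic minimizer $H_\mu$ with boundary data $h_\mu$. To remove the conditioning I would use that this minimizer depends continuously on the boundary data: the event $\{\mu_N\ \varepsilon\text{-close to }\mu\}$ has probability tending to $1$ and forces $\tfrac1N H_N$ to be uniformly $o(1)$-close to $H_\mu$, so a routine $\varepsilon$--$\delta$ argument yields the unconditional convergence. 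Encoding $\nabla H_\mu$ through the three densities as in Section \ref{Section_LLN} gives the first assertion.

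For the slope dictionary, suppose first that $\xi(\mu,\mathbf x,\boldsymbol\eta)$ is well-defined. Then Theorem \ref{Theorem_main_bulk} identifies the local limit at $(\mathbf x,\boldsymbol\eta)$ with $P_\xi$, and since horizontal lozenge positions determine the tiling, the \emph{full} local limit is the ergodic Gibbs measure $P_\xi$, whose slope equals the normalized angles of the triangle $0,1,\xi$ by the discussion around Figure \ref{Fig_triangle}. It remains to match this local slope with the macroscopic slope $\nabla H_\mu$ from the first part. For horizontal lozenges this is immediate: the single-point correlation $\rho_1^{(N)}$ converges by Theorem \ref{Theorem_main_bulk} to $K_\xi(x,n;x,n)=\tfrac1\pi\arg(\xi)$ (cf.\ \eqref{eq_sine_kernel}), and averaging this over a mesoscopic window on which $\xi$ varies continuously, then comparing with the law of large numbers, gives $p^{\mathrm{hor}}(\mathbf x,\boldsymbol\eta)=\tfrac1\pi\arg(\xi)$ together with its continuity; the two non-horizontal densities are local functions of the horizontal configuration on two neighbouring levels, hence are captured by the one- and two-point correlations of Theorem \ref{Theorem_main_bulk} and match the remaining two angles in the same way. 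Here I would also use the \cite{BufGor} description of $H_\mu$ via the quantized Voiculescu $R$--transform, which expresses $\nabla H_\mu$ through the same Stieltjes transform of $\mu$ that enters $G'_{\mu,\mathbf x,\boldsymbol\eta}$, in order to make the identification of $\nabla H_\mu$ with the triangle angles of $\xi$ clean. Since $\xi$ lies strictly in the upper half-plane, all three angles are strictly positive, so the proportions are non-zero.

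For the converse — that an interior (non-zero, continuous) slope forces $\xi$ to be well-defined — I would argue that the region $U=\{\xi\text{ well-defined}\}$ is open and, by the liquid side, coincides with a region of strictly interior slope, whereas on $U^c$ the critical equation $G'_{\mu,\mathbf x,\boldsymbol\eta}=0$ has only real roots (Lemma \ref{Lemma_number_of_roots_cont}), the triangle $0,1,\xi$ degenerates, and one proportion vanishes. I expect this converse to be the main obstacle, precisely because the all-real-roots regime is exactly the frozen case whose steepest-descent analysis is \emph{not} carried out in this paper. The clean way around it is to import from \cite{BufGor} the explicit characterization of the frozen region of $H_\mu$: that description is governed by the same analytic data, and its liquid region is characterized by the existence of a non-real root of $G'_{\mu,\mathbf x,\boldsymbol\eta}=0$, i.e.\ by $\xi$ being well-defined. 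Carefully aligning the \cite{BufGor} characterization with the equation $G'_{\mu,\mathbf x,\boldsymbol\eta}=0$, so that the liquid region of the limit shape is matched with $U$ and the frozen region with $U^c$, is the delicate point on which the whole equivalence rests.
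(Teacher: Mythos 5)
Your first part (the law of large numbers via conditioning on the top row and Theorem \ref{Theorem_LLN}) matches the paper, which disposes of it in one sentence the same way. For the slope dictionary your forward direction is a genuinely different route: the paper never invokes Theorem \ref{Theorem_main_bulk} inside this proof. Instead it works purely analytically with the exponential Stieltjes transform: it shows $\lim_{v\to \mathbf x/\boldsymbol{\eta}+1} E_{\mu[\boldsymbol{\eta}]}(v)=1/\xi$, reads off the horizontal proportion as $\tfrac{1}{\pi}\arg(\xi)$ from the Poisson-kernel boundary behaviour, and then gets the second angle from a complex Burgers-type identity by the explicit integration in \eqref{eq_density_integration}. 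Your alternative — identify the local limit with $P_\xi$, use that its slope is the triangle of angles, and match with $\nabla H_\mu$ by mesoscopic averaging of the one-point function — is workable, but it silently requires the $o(1)$ in Theorem \ref{Theorem_Kernel_asympt} to be uniform over a mesoscopic window of base points (so that the exchange of limits behind ``averaging then comparing with the law of large numbers'' is legitimate); that uniformity should be spelled out, since Theorem \ref{Theorem_main_bulk} as stated is a fixed-macroscopic-point statement.

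The genuine gap is the converse direction, and you have located it correctly but not closed it. You propose to ``import from \cite{BufGor} the explicit characterization of the frozen region \ldots\ its liquid region is characterized by the existence of a non-real root of $G'_{\mu,\mathbf x,\boldsymbol{\eta}}=0$.'' No such statement is available off the shelf: what \cite{BufGor} provides is only the quantized $R$-transform identity \eqref{eq_quantized_proj}, $R_{\mu[\boldsymbol{\eta}]}=\tfrac{1}{\boldsymbol{\eta}}R_{\mu[1]}$, phrased through functional inverses of exponential Stieltjes transforms. The entire content of the paper's argument is the chain of substitutions \eqref{eq_form_1}--\eqref{eq_form_4} converting that identity into the equation $E_{\mu[1]}(w)=\frac{\boldsymbol{\eta}z-w}{\boldsymbol{\eta}z-w+1-\boldsymbol{\eta}}$ and recognizing it, after $\boldsymbol{\eta}z-\boldsymbol{\eta}=\mathbf x$, $w=u+1$, as exactly $G'_{\mu,\mathbf x,\boldsymbol{\eta}}(u)=0$. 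Only with that identity in hand can one argue: if all roots are real then the boundary value of $E_{\mu[\boldsymbol{\eta}]}$ at $\mathbf x/\boldsymbol{\eta}+1$ is real, hence the density of $\mu[\boldsymbol{\eta}]$ there is $0$ or $1$ and some proportion vanishes — which is the contrapositive you need. Deferring this computation to a citation that does not contain it leaves the equivalence between ``liquid'' and ``$\xi$ well-defined'' unproven; this is the step you must actually carry out.
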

\begin{remark} The assumption $|\t^N_i|<CN$ of Theorem \ref{Theorem_LLN_bulk} is
much more restrictive than the tightness assumption in Theorem
\ref{Theorem_main_bulk}. Probably, $|\t^N_i|<CN$ can be weakened, but we will not go
into this direction.
\end{remark}
\begin{proof}[Proof of Theorem \ref{Theorem_LLN_bulk}] The first part of the theorem, i.e.\ the convergence of the height
function to a non-random limit shape $H_\mu(\mathbf x,\boldsymbol{\eta})$ is a
direct corollary of Theorem \ref{Theorem_LLN}, as the weak
 convergence of measures $\mu_N$ implies the convergence of the height functions (which are their distribution functions)
 along the right boundary.

 For the second part, we need an explicit characterization of $H_\mu(\mathbf x,\boldsymbol{\eta})$ obtained in \cite{BufGor} and then further refined in
 \cite{BufKnizel}.

 Given a probability measure $\mu$ of bounded by $1$ density and compact support, define its exponential
 Stieltjes transform through
 $$
 E_\mu(z)=\exp\left(\int_{\mathbb R} \frac{1}{z-x} \mu(dx)\right).
$$
 $E_\mu(z)$ is an analytic function outside the support of $\mu$. Set
 \begin{equation}
 \label{eq_R}
  R_\mu(z)= E_\mu^{(-1)}(z)-\frac{z}{z-1},
 \end{equation}
where $E_\mu^{(-1)}(v)$ is the functional inverse of $E_\mu$, and $z$ is taken to be
close to $1$ in \eqref{eq_R} in order for this inverse to be uniquely defined.
$R_\mu(z)$ is a close relative of the Voiculescu $R$--transform of \cite{Voicu}, but
is slightly different, see \cite{BufGor} for the details.

\smallskip

For $\boldsymbol{\eta}\in (0,1]$, let $\mu[\boldsymbol{\eta}]$ be a  measure on
$\mathbb R$ with density at a point $x$ equal to
$p^{{\scalebox{0.15}{\includegraphics{lozenge_hor.pdf}}}}(\boldsymbol{\eta}\mathbf{x}-\boldsymbol{\eta}
,\boldsymbol{\eta} )$. Note that $\mu[\boldsymbol{\eta}]$ is a probability measure
due to combinatorial constraint on the number of horizontal lozenges on a vertical
line. Then the results of \cite[Section 3.2]{BufGor}\footnote{There is a technical
condition in \cite{BufGor}, originating from \cite{GP} that the distribution
function of $\mu$ is piecewise--continuous. However, one easily sees that this
condition is not important for the proofs. Another way to remove this condition is
to use the known continuity of dependence of the limit shape on boundary conditions,
see \cite[Proposition 20]{CEP} for the corresponding statement and proof for the
domino tilings; the proof for lozenge tilings is the same.} yield that
\begin{equation}
\label{eq_quantized_proj}
  R_{\mu[\boldsymbol{\eta}]}(v)=\frac{1}{\boldsymbol{\eta}} R_{\mu[1]}(v).
\end{equation}

Let us rewrite the equation \eqref{eq_quantized_proj} in a different equivalent
form. Let $H(z)$ denote the function $E_{\mu[1]}^{(-1)}$, then plugging
$v=E_{\mu[\boldsymbol{\eta}]}(z)$ into \eqref{eq_quantized_proj} we get
\begin{equation}
\label{eq_form_1}
 \boldsymbol{\eta} z= H(E_{\mu[\boldsymbol{\eta}]}(z)) -  \frac{E_{\mu[\boldsymbol{\eta}]}(z)}{E_{\mu[\boldsymbol{\eta}]}(z)-1} (1-\boldsymbol{\eta}).
\end{equation}
We also have
\begin{equation}
\label{eq_form_2}
 E_{\mu[1]}(H(E_{\mu[\boldsymbol{\eta}]}(z))= E_{\mu[\boldsymbol{\eta}]}(z)
\end{equation}
Express $H(E_{\mu[\boldsymbol{\eta}]}(z))$ from \eqref{eq_form_1} and plug it into
\eqref{eq_form_2} to get
\begin{equation}
\label{eq_form_3}
 E_{\mu(1)}\left(\boldsymbol{\eta} z+\frac{E_{\mu[\boldsymbol{\eta}]}(z)}{E_{\mu[\boldsymbol{\eta}]}(z)-1} (1-\boldsymbol{\eta} ) \right)= E_{\mu[\boldsymbol{\eta}]}(z).
\end{equation}
Further, let
$$w=\boldsymbol{\eta} z+\frac{E_{\mu[\boldsymbol{\eta}]}(z)}{E_{\mu[\boldsymbol{\eta}]}(z)-1} (1-\boldsymbol{\eta} ),$$
i.e.\
$$\frac{\boldsymbol{\eta} z-w}{\boldsymbol{\eta} z-w+1-\boldsymbol{\eta}}=E_{\mu[\boldsymbol{\eta}]}(z) ,$$

 then \eqref{eq_form_3} turns into
\begin{equation}
\label{eq_form_4}
 E_{\mu[1]}\left(w \right)= \frac{\boldsymbol{\eta} z-w}{\boldsymbol{\eta} z-w+1-\boldsymbol{\eta}},
\end{equation}
\begin{comment}
or, equivalently
\begin{equation}
\label{eq_form_5} w=\boldsymbol{\eta} z-\frac{E_{\mu[1]}(w)}{E_{\mu[
 1]}(w)-1} (1-\boldsymbol{\eta} ),
\end{equation}
\end{comment}
Now if we set $\boldsymbol{\eta}z - \boldsymbol{\eta}=\mathbf x$ and $w=u+1$, then
\eqref{eq_form_4} is precisely the equation $G'_{\mu,\mathbf
x,\boldsymbol{\eta}}(u)=0$.

Therefore, if the latter has only real roots, then $u$ is necessarily real, which
implies that
\begin{equation}
\lim_{v\to \mathbf x/\boldsymbol{\eta}+1} E_{\mu [\boldsymbol{\eta}]}(v) \label{eq_exp_Stieltjes_limit}
\end{equation}
 is also real. But since the density of $\mu[\boldsymbol{\eta}]$ is always between $0$ and $1$,
 this is possible only when this density at $\mathbf x/\boldsymbol{\eta}+1$ (which is
 $p^{{\scalebox{0.15}{\includegraphics{lozenge_hor.pdf}}}}(\mathbf{x},\boldsymbol{\eta})$ by the definition)  is either $0$ or $1$.

 Therefore, if at a point $(\mathbf x,\boldsymbol{\eta})$ the proportions of lozenges
 are non-zero, then $\xi(\mu,\mathbf x,\boldsymbol{\eta})$ is a well-defined complex
 number.

 It remains to prove that if $\xi(\mu,\mathbf x,\boldsymbol{\eta})$ is well-defined, then
 $({p^{{\scalebox{0.15}{\includegraphics{lozenge_v_down.pdf}}}}(\mathbf{x},\boldsymbol{\eta}),
p^{{\scalebox{0.15}{\includegraphics{lozenge_v_up.pdf}}}}(\mathbf{x},\boldsymbol{\eta}),
p^{{\scalebox{0.15}{\includegraphics{lozenge_hor.pdf}}}}}(\mathbf{x},\boldsymbol{\eta}))$
are (normalized) three angles of the triangle with vertices $0,1, \xi(\mu,\mathbf
x,\boldsymbol{\eta})$.

Note that if $\xi(\mu,\mathbf x,\boldsymbol{\eta})$ is a well-defined non-real
number, then Theorem \ref{Theorem_LLN_bulk} implies that the average proportion of
horizontal lozenges is bounded away from $0$ and $1$; this bound is uniform in a
neighborhood of $\mathbf x,\boldsymbol{\eta}$, since $\xi(\mu,\cdot,\cdot)$
continuously depends on its two last arguments near $\mathbf x,\boldsymbol{\eta}$.
Therefore, the density of $\mu [\boldsymbol{\eta}]$ is also bounded away from $0$
and $1$ in a neighborhood of $\mathbf x/\boldsymbol{\eta}+1$ and hence $\lim_{v\to
\mathbf x/\boldsymbol{\eta}+1} E_{\mu [\boldsymbol{\eta}]}(v) $ is non-real.

Recall that \eqref{eq_form_4} upon the change of variables
$\boldsymbol{\eta}z-\boldsymbol{\eta}=\mathbf x$, $w=u+1$ is precisely
$G'_{\mu,\mathbf x,\boldsymbol{\eta}}(u)=0$. Thus, comparing the expressions of
$\xi(\mu,\mathbf x,\boldsymbol{\eta})$ through $u$ ($=\tau(\mu,\mathbf
x,\boldsymbol{\eta})$) and of $E_{\mu [\boldsymbol{\eta}]}(z)$ through $w$, we
conclude that
\begin{equation} \label{eq_argument_limit}
 \lim_{v\to \mathbf x/\boldsymbol{\eta}+1} E_{\mu [\boldsymbol{\eta}]}(v)=\frac{1}{\xi(\mu,\mathbf x,\boldsymbol{\eta})},
\end{equation}
where the limit is taken from the \emph{upper} half--plane.
 On the other hand, the definition of $E_{\mu[\boldsymbol{\eta}]}$ implies
\begin{equation}
\label{eq_argument_through_density}
  \lim_{v\to \mathbf x/\boldsymbol{\eta}+1} \arg(E_{\mu [\boldsymbol{\eta}]}(v))=-\lim_{\eps\to 0} \int_{-\infty}^{\infty} \frac{\eps}{\eps^2+t^2}
p^{{\scalebox{0.15}{\includegraphics{lozenge_hor.pdf}}}}(\mathbf{x}+
t,\boldsymbol{\eta}) dt,
\end{equation}
where the first limit is taken in the \emph{upper} half--plane. Since
$f_\eps(t)=\frac{1}{\pi}\cdot \frac{\eps}{\eps^2+t^2}$ approaches the delta-function
as $\eps\to 0$ (and using also the continuity of $\xi(\mu,\mathbf
x,\boldsymbol{\eta})$ under small perturbations of $\mathbf x$), we conclude that
$$
 p^{{\scalebox{0.15}{\includegraphics{lozenge_hor.pdf}}}}(\mathbf x,\boldsymbol{\eta})=\frac{1}{\pi} \arg( \xi(\mu,\mathbf x,\boldsymbol{\eta}) ),
$$
which is precisely one of the angles of the $0,1,\xi(\mu,\mathbf
x,\boldsymbol{\eta})$ triangle. In order to identify another angle, define $\xi(\mu,
z,\boldsymbol{\eta})$
 for $z$ in the upper half--plane as an analytic continuation of $\xi(\mu,\mathbf x,\boldsymbol{\eta})$, satisfying
\begin{equation}
\label{eq_Psi_cont}
  \frac{1}{\xi(\mu, z,\boldsymbol{\eta})}=E_{\mu[\eta]}\left(\frac{z}{\boldsymbol{\eta}}+1\right).
\end{equation}

Plugging in \eqref{eq_Psi_cont} into \eqref{eq_form_3}, we get
$$
\frac{1}{ E_{\mu(1)}\left(\boldsymbol{\eta}+ z+\frac{1-\boldsymbol{\eta}
}{1-\xi(\mu, z,\boldsymbol{\eta})}  \right)}= \xi(\mu, z,\boldsymbol{\eta}).
$$
Differentiating the last equation
 with respect to $z$
and $\boldsymbol{\eta}$, one observes an identity (for all $z$ in the upper
half--plane)
$$
 \frac{\xi(\mu, z,\boldsymbol{\eta})-1}{\xi(\mu, z,\boldsymbol{\eta})} \cdot \frac{\partial}{\partial \boldsymbol{\eta}} \xi(\mu, z,\boldsymbol{\eta})=\frac{\partial}{\partial z} \xi(\mu, z,\boldsymbol{\eta}),
$$
which is (a version of) the complex inviscid Burgers' equation, cf.\ \cite{KO}.

Since the proportions of lozenges are identified with derivatives of the limiting
height function, we can further write (here $L$ is an arbitrary \emph{very large}
positive number)
\begin{multline} \label{eq_density_integration}
p^{{\scalebox{0.15}{\includegraphics{lozenge_v_down.pdf}}}}(\mathbf{x},\boldsymbol{\eta})
=\frac{\partial}{\partial \boldsymbol{\eta}} H_\mu(\mathbf
x,\boldsymbol{\eta})=-\frac{\partial}{\partial \boldsymbol{\eta}} \int_{\mathbf x}^L
\left(1-p^{{\scalebox{0.15}{\includegraphics{lozenge_hor.pdf}}}}(t,\boldsymbol{\eta})\right)
dt =
 \frac{\partial}{\partial \boldsymbol{\eta}} \int_{\mathbf x}^L
p^{{\scalebox{0.15}{\includegraphics{lozenge_hor.pdf}}}}(t,\boldsymbol{\eta}) dt\\ =
\lim_{\eps\downarrow 0} \frac{\partial}{\partial \boldsymbol{\eta}} \int_{\mathbf
x}^L
 \frac{1}{\pi} \arg (\xi(\mu,t+\ii \eps,\boldsymbol{\eta}) dt
= \frac{1}{\pi} \lim_{\eps\downarrow 0} \Im \int_{\mathbf x}^L
  \frac{\partial}{\partial \boldsymbol{\eta}} \ln \left(\xi(\mu,t+\ii \eps,\boldsymbol{\eta})\right) dt
\\
=\frac{1}{\pi} \lim_{\eps\downarrow 0} \Im \int_{\mathbf x}^L
  \frac{\frac{\partial}{\partial \boldsymbol{\eta}} \xi(\mu,t+\ii \eps,\boldsymbol{\eta})}{
  \xi(\mu,t+\ii \eps,\boldsymbol{\eta})} dt
= \frac{1}{\pi} \lim_{\eps\downarrow 0} \Im \int_{\mathbf x}^L
  \frac{1}{ \xi(\mu,t+\ii \eps,\boldsymbol{\eta})-1} \, \frac{\partial}{\partial t} \xi(\mu,t+\ii \eps,\boldsymbol{\eta})
   dt
\\=\frac{1}{\pi} \lim_{\eps\downarrow 0} \Im \Bigl[
\ln\left(1-\xi(\mu,\mathbf L+\ii
\eps,\boldsymbol{\eta})\right)-\ln\left(1-\xi(\mu,\mathbf x+\ii
\eps,\boldsymbol{\eta})\right)\Bigr] .
\end{multline}
We implicitly use in the last computation that $\xi(\mu,t+\ii
\eps,\boldsymbol{\eta})$ is non-zero due to \eqref{eq_Psi_cont}, and that it is not
equal to $1$, since its argument (computed as the integral in the right--hand side
of \eqref{eq_argument_through_density}) is strictly between $0$ and $\pi$.

 For large positive $L$, $\xi(\mu, L \eps,\boldsymbol{\eta})$ is a real number
 smaller than $1$ due to \eqref{eq_Psi_cont}. Therefore, the imaginary part of the
 first term in  the right--hand side of \eqref{eq_density_integration} vanishes.
 We conclude that
 $$
  p^{{\scalebox{0.15}{\includegraphics{lozenge_v_down.pdf}}}}(\mathbf{x},\boldsymbol{\eta})=-\frac{1}{\pi}
  \arg \bigl(1-\xi(\mu,\mathbf x ,\boldsymbol{\eta})\bigr),
 $$
 which is precisely the formula for the angle of the $(0,1,\xi(\mu,\mathbf x,\boldsymbol{\eta}))$
 triangle adjacent to the right vertex.
\end{proof}

\begin{theorem}
\label{Theorem_tilings} Consider a sequence of domains $\Omega_L$, $L=1,2,\dots,$
which approximate $(\Omega,h)$ as in Theorem \ref{Theorem_LLN}. In addition suppose
that $\Omega^{trap}_L$ is a sequence of trapezoids, such that the symmetric
difference $\Omega_L \triangle \Omega^{trap}_L$ is a union of triangles inside the
trapezoid and adjacent to its base, cf.\ Figure \ref{Fig_covered_domains}. Take a
sequence of points $x(L),n(L)$ inside $\Omega_L \cap \Omega^{trap}_L$, such that
$\lim_{L\to\infty} \frac{1}{L}(x(L),n(L))=(\mathbf x,\boldsymbol{\eta})$. If the
proportions of lozenges
$({p^{{\scalebox{0.15}{\includegraphics{lozenge_v_down.pdf}}}},
p^{{\scalebox{0.15}{\includegraphics{lozenge_v_up.pdf}}}},
p^{{\scalebox{0.15}{\includegraphics{lozenge_hor.pdf}}}}})$ encoding the limit shape
$H_\Omega$ of Theorem \ref{Theorem_LLN} are continuous and non-zero at point
$(\mathbf x,\boldsymbol{\eta})$, then the point process of lozenges near $x(L),n(L)$
in uniformly random lozenge tiling of $\Omega_L$ converges as $L\to\infty$ to a
translation invariant ergodic Gibbs measure of geometric slope
$({p^{{\scalebox{0.15}{\includegraphics{lozenge_v_down.pdf}}}}(\mathbf
x,\boldsymbol{\eta}),
p^{{\scalebox{0.15}{\includegraphics{lozenge_v_up.pdf}}}}(\mathbf
x,\boldsymbol{\eta}),
p^{{\scalebox{0.15}{\includegraphics{lozenge_hor.pdf}}}}}(\mathbf
x,\boldsymbol{\eta}))$
\end{theorem}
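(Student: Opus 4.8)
The plan is to reduce Theorem \ref{Theorem_tilings} to Theorems \ref{Theorem_main_bulk} and \ref{Theorem_LLN_bulk} by realizing the restriction of the uniformly random tiling of $\Omega_L$ to the trapezoid $\Omega^{trap}_L$ as one of the Gibbs measures $\mathcal P(N)$ analyzed there. First I would set up the dictionary between $\Omega_L$ and the trapezoid. Since three sides of $\Omega^{trap}_L$ lie on the same lines as sides of $\Omega_L$, and the symmetric difference $\Omega_L\triangle\Omega^{trap}_L$ consists of triangles adjacent to the base, the horizontal lozenges crossing any vertical section of the trapezoid on the overlap $\Omega_L\cap\Omega^{trap}_L$ are in bijection with those of the tiling of $\Omega_L$. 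Writing $N=N(L)$ for the width of $\Omega^{trap}_L$ and letting $\t^N$ denote the (random) positions of the $N$ horizontal lozenges along the base, the global Gibbs property of the uniform measure on tilings of $\Omega_L$ shows that, conditionally on $\t^N$, the tiling inside the trapezoid is uniform. Hence the restriction is exactly a measure $\mathcal P(N)$ of the type in Theorem \ref{Theorem_main_bulk}, with $\t^N$ distributed as $\mu_{\mathcal P(N)}$.

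Next I would verify the two hypotheses of Theorems \ref{Theorem_main_bulk} and \ref{Theorem_LLN_bulk}. The bound $|\t^N_i|<CN$, and hence tightness of the log-moments $\tfrac1N\sum_i\ln^{1+\eps}(1+|\t^N_i|/N)$, is immediate from $\tfrac1L\Omega_L\subset\Omega$ with $\Omega$ bounded. For the convergence of $\mu_N=\tfrac1N\sum_i\delta_{\t^N_i/N}$ to a \emph{deterministic} measure $\mu$ I would invoke Theorem \ref{Theorem_LLN}: the empirical measure of horizontal lozenges along the base is, up to the identification of Section \ref{Section_LLN}, the spatial derivative of the height function along that section, so the convergence in probability of the rescaled height function of $\Omega_L$ to the non-random limit shape $H_\Omega$ forces $\mu_N\to\mu$ weakly, in probability, with $\mu$ read off from $H_\Omega$ restricted to the base.

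With both hypotheses in hand, Theorem \ref{Theorem_main_bulk} yields that the point process of lozenges near $(x(L),n(L))$ converges to $P_{\xi(\mu,\mathbf x,\boldsymbol\eta)}$, and Theorem \ref{Theorem_LLN_bulk} guarantees that $\xi(\mu,\mathbf x,\boldsymbol\eta)$ is well-defined precisely because the proportions are assumed continuous and non-zero at $(\mathbf x,\boldsymbol\eta)$, while identifying the geometric slope of $P_\xi$ with the normalized angles of the triangle $0,1,\xi$. The remaining point, which I expect to be the main subtlety, is to check that this slope coincides with the geometric slope of the \emph{global} limit shape $H_\Omega$ of $\Omega_L$ at $(\mathbf x,\boldsymbol\eta)$. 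This is a consistency statement: the first part of Theorem \ref{Theorem_LLN_bulk} produces a limit shape $H_\mu$ for the trapezoid Gibbs measure, whereas Theorem \ref{Theorem_LLN} produces $H_\Omega$ for $\Omega_L$; since both describe the \emph{same} random tiling on $\Omega_L\cap\Omega^{trap}_L$, uniqueness of the limiting height function forces $H_\mu=H_\Omega$ on the overlap, so their directional derivatives, i.e.\ the three proportions, agree at $(\mathbf x,\boldsymbol\eta)$. Matching the coordinate systems of Figures \ref{Fig_GT_tiling} and \ref{Fig_paths_height} and confirming that the non-zero, continuous proportions place the point in the liquid region (so that $\xi$ is genuinely non-real, as needed for Theorem \ref{Theorem_main_bulk}) then completes the argument.
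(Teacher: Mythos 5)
Your proposal is correct and follows essentially the same route as the paper: Theorem \ref{Theorem_LLN} supplies the deterministic limit $\mu$ of the empirical measures of horizontal lozenges along the base of the trapezoid (via the height function, whose distribution function along that section they are), Theorem \ref{Theorem_main_bulk} then gives convergence to $P_{\xi(\mu,\mathbf x,\boldsymbol\eta)}$, and Theorem \ref{Theorem_LLN_bulk} both certifies that $\xi$ is well-defined under the non-degeneracy hypothesis and identifies its geometric slope with that of the limit shape. The consistency point you flag --- that the trapezoid limit shape $H_\mu$ and the global $H_\Omega$ agree on the overlap, so the slopes match --- is exactly the (implicit) content of the paper's final step identifying the complex slope with that of $H_\Omega$.
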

\begin{remark} The convergence is meant in the same sense as in Remark
\ref{Remark_convergence_meaning}.
\end{remark}
\begin{proof}[Proof of Theorem \ref{Theorem_tilings}] The proof is a combination of
Theorem \ref{Theorem_LLN} with Theorems \ref{Theorem_main_bulk} and
\ref{Theorem_LLN_bulk}. More precisely, Theorem \ref{Theorem_LLN} guarantees that
the random probability measures describing the positions of horizontal lozenges
along the boundary of $\Omega^{trap}_L$ converge as $L\to\infty$, which allows us to
use Theorem \ref{Theorem_main_bulk} to deduce the bulk limit theorem. Theorem
\ref{Theorem_LLN_bulk} then identifies the complex slope in Theorem
\ref{Theorem_main_bulk} with the complex slope of the limit shape. Since the complex
slopes are in-to-one correspondence with geometric slopes
$({p^{{\scalebox{0.15}{\includegraphics{lozenge_v_down.pdf}}}}(\mathbf
x,\boldsymbol{\eta}),
p^{{\scalebox{0.15}{\includegraphics{lozenge_v_up.pdf}}}}(\mathbf
x,\boldsymbol{\eta}),
p^{{\scalebox{0.15}{\includegraphics{lozenge_hor.pdf}}}}}(\mathbf
x,\boldsymbol{\eta}))$, the latter are identified as well.
\end{proof}

\end{document}